\newcommand{\be }{\begin{equation}}
\newcommand{\ee }{\end{equation}}
\newcommand{\huaF}{\mathcal{F}}
\newcommand{\huaG}{\mathcal{G}}
\newcommand{\frkR}{\mathfrak R}
\newcommand{\Courant}[1]{\left\llbracket  #1\right\rrbracket }
\newcommand{\Id}{{\rm{Id}}}
\newcommand{\br}[1]{   [ \cdot,    \cdot  ]   }
\newcommand{\ltp}[1]{\Courant{\cdot,\cdot,\cdot}}
\newcommand{\Hom}{\mathrm{Hom}}
\newcommand{\gl}{\mathfrak {gl}}
\newcommand{\ad}{\mathrm{ad}}
\newtheorem{defn}{Definition}[section]
\newtheorem{thm}[defn]{Theorem}
\newtheorem{lem}[defn]{Lemma}
\newtheorem{prop}[defn]{Proposition}
\newtheorem{cor}[defn]{Corollary}
\newtheorem{ex}[defn]{Example}
\newtheorem{re}[defn]{Remark}
\begin{document}
	
\title{\sf Twisting  $\mathcal{O}$-operators by $(2,3)$-Cocycle of  Hom-Lie-Yamaguti  Algebras   with    Representations}

\author{ 
 Sami Mabrouk $^{1}$\footnote{E-mail: mabrouksami00@yahoo.fr },\ \
Sergei Silvestrov $^{2}$\footnote{ E-mail: sergei.silvestrov@mdu.se \text{(Corresponding author)}},\ \ Fatma  Zouaidi$^{3}$
    \footnote {  E-mail:  zouaidifatmazouaidi@gmail.com}
\\ \\
$^{1}${\small  University of Gafsa, Faculty of Sciences Gafsa, 2112 Gafsa, Tunisia}\\
$^{2}${\small M\"{a}lardalen University,
Division of Mathematics and Physics,} \\
{\small
School of Education, Culture and Communication,} \\
\nopagebreak
{\small \hspace{0.5 cm} Box 883, 72123 V\"{a}ster{\aa}s, Sweden}\\$^{3}${\small University of Sfax, Faculty of Sciences Sfax,  BP 1171, 3038 Sfax, Tunisia} 
}

\date{}
\maketitle
\begin{abstract}
In this paper, we first introduce the notion of twisted $\mathcal O$-operators on a  Hom-Lie-Yamaguti  algebra by a given $(2,3)$-cocycle with coefficients in a  representation. We show that a twisted $\mathcal O$-operator induces a Hom-Lie-Yamaguti structure.  We also introduce the notion of a weighted Reynolds operator on a Hom-Lie-Yamaguti algebra, which can serve as a special case of twisted $\mathcal O$-operators on Hom-Lie-Yamaguti algebras. Then, we  define a cohomology of twisted $\mathcal O$-operator on  Hom-Lie-Yamaguiti algebras with coefficients in a  representation. Furthermore, we introduce and study the Hom-NS-Lie-Yamaguti algebras as the underlying structure of the twisted $\mathcal O$-operator on Hom-Lie-Yamaguti algebras. Finally, we investigate the twisted $\mathcal O$-operator on Hom-Lie-Yamaguti algebras induced by the twisted $\mathcal O$-operator on a Hom-Lie algebras. 
\end{abstract}
\noindent\textbf{Keywords:}     Hom-Lie algebra,    Hom-Lie-Yamaguti algebra, representation, twisted $\mathcal O$-operator, Reynolds operator, cohomolgy.

\noindent{\textbf{MSC(2020):}}  17B61; 17D30; 17A40; 17B10; 17B56.
\tableofcontents

\smallskip

\smallskip
\section{Introduction}
\textbf{Hom-algebras.} 
The area of Hom-algebras was initiated in the work of Hartwig, Larsson and Silvestrov in \cite{HartLarSil20032006DefLiealgsigmaderiv}, where the general quasi-deforma\-tions and discretizations of Lie algebras of vector fields using more general $\sigma$-derivations (twisted derivations) in place of ordinary derivations along with a general method for construction of deformations of Witt and Virasoro type algebras have been developed, motivated initially by specific examples of $q$-deformed Jacobi identities in the $q$-deformed (quantum) algebras in mathematical physics associated to $q$-difference operators and corresponding $q$-deformations of differential calculi
\cite{
AizawaSaito,
ChaiElinPop,
ChaiIsLukPopPresn,
ChaiKuLuk,
ChakrabartiJagannathan1992pqdefVirasoro,
Hu,
LiuKQuantumCentExt,
LiuKQCharQuantWittAlg,
LiuKQCharQuantWittAlgrootsunitiy,
LiuKQPhDthesis}.
These $q$-deformed algebras, with $q$-deformed Jacobi identities associated to $q$-difference operators and corresponding $q$-deformations of differential calculi,
serve as initial examples of more general quantum deformations of differential calculus and corresponding algebras, obtained by replacing the
usual derivation by general $(\sigma,\tau)$-derivations, which satisfy modified Jacobi identities deformed by some linear maps in some special ways as was shown in \cite{HartLarSil20032006DefLiealgsigmaderiv, LarssonSilvJA2005QuasiHomLieCentExt2cocyid, LarssonSilvQuasidefsl2-LUPr04Arx05CA07}, where also general algebras satisfying such identities where introduced and called Hom-Lie algebras and quasi-Hom-Lie algebras.
Furthermore, the general quasi-Lie algebras, which contain as subclasses the quasi-Hom-Lie algebras and Hom-Lie algebras,
as well as general color quasi-Lie algebras, which also include Hom-Lie superalgebras and Hom-Lie color algebras as subclasses,
were first introduced in
\cite{HartLarSil20032006DefLiealgsigmaderiv,LarssonSilvJA2005QuasiHomLieCentExt2cocyid,LarssonSilvestrov200405QuasiLiealg,
LarssonSilvestrovGradedquasiLiealg2005ChJPhys,SigurdssonSilvestrovCzech:grquasiLieWitt,SigSilvGLTbdSpringer2009HomLieWitttype}.
With these works, the area of Hom-algebras has started.
The Hom-associative algebras play the role of associative algebras in the Hom-Lie setting.
They were first introduced in \cite{MakhlSilv200608JGLTAhomstructure}, where it is shown that the commutator bracket defined from the multiplication in a Hom-associative algebra yields a Hom-Lie algebra, that is, Hom-associative algebras are Hom-Lie admissible.
Furthermore, in \cite{MakhlSilv200608JGLTAhomstructure}, the more general $G$-Hom-associative algebras, which include as subclasses the Hom-associative algebras, as well as the Hom-Vinberg and Hom-pre-Lie algebras extending to the Hom-algebra structures  setting the Vinberg algebras and pre-Lie algebras, were introduced and also shown to be Hom-Lie admissible.
The adjoint functor from the category of Hom-Lie algebras to the category of 
Hom-associative algebras and the enveloping algebra were considered in  \cite{Yau:EnvLieAlg}.
The fundamentals of formal deformation theory and associated cohomology
structures for Hom-Lie algebras have been considered first in
\cite{MakhSilvForumMath2010HomDeform}. The elements of homology for Hom-Lie
algebras have been developed in \cite{DY2009JLT2007arxHomAlgsHomol}. In \cite{MakhSil200709HomHopfGLTBSpringer} and \cite{MakhSilvJAA2010HomAlgHomCoalg},
the theory of Hom-coalgebras and related structures is developed. 
Representations and cohomologies of Hom-Lie algebras were considered in \cite{Jebhi, Sheng}, and cohomologies adapted to central extensions and deformations
were studied in \cite{Jebhi, Sheng}. Constructions of $n$-ary Hom-Nambu-Lie algebras from Hom-Lie algebras
were addressed in \cite{ArnMakhSil2014-StrCoh3LieIndLie, ArnMakhSil2010-TernaryHomNambuLieIndHomLie, ArnMakhSil2011-constr-nLienAryHomNambuLie, kms:nhominduced, kms:solvnilpnhomlie2020, AmmMabMakh,D.Y}. The universal enveloping algebras and Poincar'e-Birkhoff-Witt theorem for involutive Hom-Lie algebras were obtained in \cite{GuoZhangZheng}. Hom-Lie algebras with derivations were
studied in \cite{LiWang}. 

In \cite{GaparayiIssa-IJA2012-TwistedgenLieYamalgs}, Gaparayi and Issa introduced the concept of Hom-Lie-Yamaguti algebras, which can be viewed as a Hom-type generalization of Lie-Yamaguti algebras. Recently, in \cite{MaChenLin2}, Lin and Chen introduced the quasi-derivations of Lie-Yamaguti algebras; the Hom version is studied in \cite{MaChenLin}. In \cite{ZhangLi}, Zhang and Li introduced the representation and cohomology theory of Hom-Lie-Yamaguti algebras and studied the deformations and extensions of Hom-Lie-Yamaguti algebras as an application, generalizing the results of \cite{ZhangLi2}. In \cite{ZhangHanBi}, the authors introduced the notion of crossed modules for Hom-Lie-Yamaguti algebras and studied their construction of Hom-Lie-Yamaguti algebras.

\textbf{Yamaguti cohomology.} The cohomology theory of Lie–Yamaguti algebras was
established in Yamaguti \cite{Yamaguti}. In \cite{ZhangLi2}, Zhang and Li study the deformation and extension theory of Lie–Yamaguti algebras and 
prove that a $1$-parameter infinitesimal deformation of a Lie–Yamaguti algebra  
corresponds to a Lie–Yamaguti algebra of deformation type and a $(2,3)$-cocycle in the adjoint representation. The representation and cohomology theory of Hom-Lie–Yamaguti algebras are introduced in \cite{ZhangLi}. The authors study the deformation and extension of the Hom-Lie-Yamaguti algebras and prove that the abelian extensions of Hom-Lie–Yamaguti algebras are classified by the $(2,3)$-cohomology group.

\textbf{Reynolds operator.} Reynolds operators occurred for the first time in O. Reynolds’ famous study of turbulence theory into fluid dynamics (\cite{Reynolds}). In turbulent flow models of fluid dynamics, especially in the Reynolds-averaged Navier-Stokes equations, Reynolds operators often take the average over the fluid flow under the group of time translations. Subsequently, the Reynolds operator was named
in \cite{KamF}, where the operator was considered as a mathematical subject in general.
See \cite{Miller1, Miller2, Rota} for more studies of Reynolds operators. In particular, the free Reynolds algebras were given in \cite{ZhangGuo}. Recently, A. Das in \cite{Das2} introduced a generalization of Reynolds operators on an associative algebra and a
Lie algebra,  called twisted Rota-Baxter operators, generalized Reynolds operators or twisted $\mathcal O$-operators. Twisted $\mathcal O$-operators on other algebraic structures have also been widely studied, including 3-Lie algebras \cite{Atef}, $3$-Hom-Lie algebras \cite{LiWang2}, Hom-Lie algebras \cite{Xu} and Lie triple systems \cite{Rahma}.

\textbf{Main results and layout of the paper.} Our aim in this paper is to consider twisted $\mathcal O$-operators on a Hom-Lie-Yamaguti  algebras. 
We show that a  twisted $\mathcal O$-operators on a Hom-Lie-Yamaguti  algebra $T$ induce a new Hom-Lie-Yamaguti  algebra structure and there is a suitable representation of it. The corresponding
Yamguti cohomology is called the cohomology of the twisted $\mathcal O$-operators. As an application of cohomology, we study deformations of twisted $\mathcal O$-operators on a Hom-Lie-Yamaguti  algebra $T$. 
Finally, we introduce a new algebraic structure, called NS-Hom-Lie-Yamaguti  algebra algebras. We show
that the Hom-NS-Lie-Yamaguti algebra splits the Hom-Lie-Yamaguti algebra and the underlying structure of a twisted $\mathcal O$-operator. 

This paper is organized as follows. In Section \ref{Sec2}, we review some basic notions and facts about Hom-algebras, such as Hom-Lie-Yamaguti algebras, and their representations and cohomology theory. In section \ref{Sec3},  we introduce the notion of twisted $\mathcal O$-operators, in particular Reynolds operators on a Hom-Lie-Yamaguti  algebra, and we give some constructions. In Section \ref{Sec4}, we develop the
cohomology of twisted $\mathcal O$-operators on a Hom-Lie-Yamaguti  algebra with coefficients in a suitable representation. In Section \ref{Sec5}, we study the deformations of twisted $\mathcal O$ operators through the theory of cohomology. Finally, we introduce the notion of NS-Hom-Lie-Yamaguti  algebra as a generalization of (pre-)Hom-Lie-Yamaguti algebra, which is the underlying algebraic structure of twisted $\mathcal O$-operators on a Hom-Lie-Yamaguti  algebra. Moreover, we show that Hom-Lie algebras, Hom-NS-Lie algebras, Hom-Lie-Yamaguti algebras and Hom-NS-Lie-Yamaguti algebras are closely
related in Section \ref{Sec6}.

\section{Basics on Hom-Lie-Yamaguti algebras}\label{Sec2}
All linear spaces in the article are assumed to be over a field $\mathbb{K}$ of characteristic $0$ and finite-dimensional.

In this section, we recall some basic notions about Hom-algebras, such as Hom-Lie algebras,  Hom-Lie triple systems, Hom-Lie-Yamaguti algebras, representations and their cohomology theory.
Henceforth, $\underset{x,y,z}{\circlearrowleft}$ denotes the summation over cyclically permuted $(x,y,z)$. 
\begin{defn}[\hspace{-0.2pt}\cite{HartLarSil20032006DefLiealgsigmaderiv,LarssonSilvestrov200405QuasiLiealg,MakhlSilv200608JGLTAhomstructure}] 
A Hom-Lie  algebra is a triple $(A,[\cdot,\cdot],\alpha)$ consisting of a linear space $A$, a bilinear map ("bracket") 
$[\cdot,\cdot]:A\times A\rightarrow A$ and a linear map $\alpha:A\rightarrow A$ satisfying for all $x,y,z\in A$,   
\begin{eqnarray}
& [x,y]=-[y,x] & \  
\begin{minipage}{3cm} \centering 
{\small Skew-symmetry} \\ 
{\small identity} 
\end{minipage} 
\label{HomLieskewsym}
\\ \label{hj}
& \underset{x,y,z}{\circlearrowleft} [\alpha(x),[y,z]] = [\alpha(x),[y,z]]+[\alpha(y),[z,x]]+[\alpha(z),[x,y]]= 0.& \  
\begin{minipage}{2.5cm} \centering 
{\small Hom-Lie Jacobi} \\ 
{\small identity} 
\end{minipage}
\label{HomLieJacobiIdentity}
\end{eqnarray}
\end{defn}
    
Using skew-symmetry and bilinearity of the bracket, the Hom-Lie Jacobi identity \eqref{HomLieJacobiIdentity} for Hom-Lie  algebras can be written as
 $ [\alpha(x),[y,z]]=[[x,y],\alpha(z)]+[\alpha(y),[x,z]].$ 
     When $\alpha= Id$, Hom-Lie algebra is a Lie algebra since the Hom-Jacobi identity becomes the Jacobi identity for Lie algebras. An algebra is a Lie algebra if and only if the identity map $\alpha=Id$ is in the linear space of all Hom-Lie structures on the algebra consisting of all the linear maps $\alpha$ making the algebra into a Hom-Lie algebra.  
     
\begin{defn} \label{Def-multiplHomLie}  The  Hom-Lie algebra $(A,[\cdot,\cdot],\alpha)$ is called multiplicative if $\alpha\circ[\cdot,\cdot]=[\cdot,\cdot]\circ\alpha^{\otimes2}$ meaning for all $x,y\in A$, 
\begin{equation} \label{HLAmultipl} \alpha([x,y])=[\alpha(x),\alpha(y)],\end{equation} 
that is, in other words, if the linear map $\alpha:A\rightarrow A$ is an algebra endomorphism of $(A,[\cdot,\cdot])$.
\end{defn}

\begin{defn} \label{Def-multiplHomLie} Let $V$ be a linear space  and $\beta\in End(V)$ is a linear operator on $V$. A representation of a Hom-Lie algebra $(A,[\cdot,\cdot],\alpha )$ on $(V,\beta)$ is the linear map $\rho : A \to End(V) $, or more precisely is the triple $(V,\rho,\beta)$, such that for all $x,y \in A$,
\begin{equation} 
\begin{array}{l}
\rho(\alpha(x))\circ \beta=\beta\circ \rho(x),\\
\rho([x,y]) \circ\beta= \rho(\alpha(x))\rho(y)-\rho(\alpha(y))\rho(x).
\end{array}
\label{RLIE} 
\end{equation}
\end{defn}

If $\alpha=\beta=Id$, then Definition \ref{Def-multiplHomLie} reduces to the definition of the representations of Lie algebras. However, if $\alpha=Id$, but $\beta\neq Id$, then one gets a more general notion of representation of Lie algebra dependent on the map $\beta$ commuting with all operators of the representation.    

Let $(V,\rho,\beta)$ be a representation of a Hom-Lie algebra $(A,[\cdot,\cdot] , \alpha)$. A skew-symmetric bilinear map  $\huaF:A \times A  \to V $ is a $2$-cocycle, if it  satisfies for all $x,y,z \in A$,
\begin{align}
\label{CoLie}&\underset{x,y,z}{\circlearrowleft}\big(\rho(\alpha(x)) \huaF(y,z) + \huaF(\alpha(x),[y,z])\big) =0.
\end{align}
\begin{defn}[\hspace{-0.2pt}\cite{GaparayiIssa-IJA2012-TwistedgenLieYamalgs}]\label{LY}
A {\bf Hom-Lie-Yamaguti algebra}  
$(A,[\cdot,\cdot],\Courant{\cdot,\cdot,\cdot},\alpha)$ consists of a linear space $A$ equipped with a linear map $\alpha:A\rightarrow A$, a bilinear bracket $[\cdot,\cdot]: A \times A  \to A $ and  a trilinear bracket  $\Courant{\cdot,\cdot,\cdot}: A\times A \times A  \to A $, satisfying for all $x,y,z,w,t \in A$, 
\begin{align}
&\label{HLHLYskewsym} [x,y]=-[y,x],\quad \Courant{x,y,z}=-\Courant{y,x,z}, \\
&\label{LY1} \underset{x,y,z}{\circlearrowleft}[[x,y],\alpha(z)]+\underset{x,y,z}{\circlearrowleft}\Courant{x,y,z}=0, \\
&\label{LY2}\underset{x,y,z}{\circlearrowleft}\Courant{[x,y],\alpha(z),\alpha(w)}=0,
\\
 &\label{LY3}\Courant{\alpha(x),\alpha(y),[z,w]}=[\Courant{x,y,z},\alpha^2(w)]+[\alpha^2(z),\Courant{x,y,w}], \\
&\Courant{\alpha^2(x),\alpha^2(y),\Courant{z,w,t}}
=\Courant{\Courant{x,y,z},\alpha^2(w),\alpha^2(t)}
+\Courant{\alpha^2(z),\Courant{x,y,w},\alpha^2(t)} \nonumber \\ &\hspace{9cm}+\Courant{\alpha^2(z),\alpha^2(w),\Courant{x,y,t}}.
\label{HLYa2homLeib}
\end{align}
\end{defn}
When $\alpha=\Id$, a Hom-Lie-Yamaguti algebra reduces to a Lie-Yamaguti algebra.
\begin{defn}\label{def-HomLieYamaguti}
A Hom-Lie-Yamaguti algebra   
$(A,[\cdot,\cdot],\Courant{\cdot,\cdot,\cdot},\alpha)$ is called  multiplicative if \\$\alpha\circ[\cdot,\cdot]=[\cdot,\cdot]\circ\alpha^{\otimes2}$ and $\alpha\circ\Courant{\cdot,\cdot,\cdot}=
\Courant{\cdot,\cdot,\cdot}\circ\alpha^{\otimes3}$, that is, if for all $x,y,z\in A$, 
\begin{equation} \label{HLYmultpl} 
\alpha([x,y])=[\alpha(x),\alpha(y)], \quad 
\alpha( \Courant{x,y,z})=\Courant{\alpha(x),\alpha(y),\alpha(z)}.
\end{equation}
\end{defn}



\begin{re}
For $[\cdot,\cdot]=0$, the  (multiplicative) Hom-Lie-Yamaguti algebra  $(A, [\cdot,\cdot], \Courant{\cdot,\cdot,\cdot},\alpha)$ reduces to a (multiplicative) Hom-Lie triple system  with respect to $\alpha^2$. On the other hand, if $\Courant{\cdot,\cdot,\cdot}=0$,  then the (multiplicative) Hom-Lie-Yamaguti algebra  reduces to a (multiplicative) Hom-Lie algebra  $(A, [\cdot,\cdot],\alpha)$.
\end{re}

Let $(A_1,[\cdot,\cdot]_1,\Courant{\cdot,\cdot,\cdot}_1,\alpha_1)$ and $(A_2,[\cdot,\cdot]_2,\Courant{\cdot,\cdot,\cdot}_2,\alpha_2)$ be two Hom-Lie-Yamaguti algebras. A linear map
$\phi:A_1\to A_2$ is called the morphism of Hom-Lie-Yamaguti algebras if for all $x,y,z\in A_1$,
$$\phi([x,y]_1)=[\phi(x),\phi(y)]_2\quad\text{and}\quad \phi(\Courant{x,y,z}_1)=\Courant{\phi(x),\phi(y),\phi(z)}_2.$$
\begin{thm} \label{induced}
If $(A,[\cdot,\cdot],\alpha)$ is a  multiplicative Hom-Lie algebra and for all $x,y, z \in A$, 
   \begin{align*}
     \Courant{x,y,z}_T=[[x,y],\alpha(z)],
 \end{align*}  
then $(A,[\cdot,\cdot],\Courant{\cdot,\cdot,\cdot}_T,\alpha)$ 
is the induced multiplicative Hom-Lie-Yamaguti algebra.
\end{thm}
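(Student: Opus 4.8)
The plan is to verify directly, one at a time, the six conditions that make $(A,[\cdot,\cdot],\Courant{\cdot,\cdot,\cdot}_T,\alpha)$ a multiplicative Hom-Lie-Yamaguti algebra: the skew-symmetries \eqref{HLHLYskewsym}, the two multiplicativity relations \eqref{HLYmultpl}, and the four Yamaguti-type identities \eqref{LY1}--\eqref{HLYa2homLeib}. The only input is that $(A,[\cdot,\cdot],\alpha)$ is a multiplicative Hom-Lie algebra, and the single tool used throughout is the rewritten Hom-Lie Jacobi identity $[\alpha(a),[b,c]]=[[a,b],\alpha(c)]+[\alpha(b),[a,c]]$ together with multiplicativity $\alpha([a,b])=[\alpha(a),\alpha(b)]$.

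First I would dispose of the routine parts. Skew-symmetry of the binary bracket is assumed, and $\Courant{x,y,z}_T=[[x,y],\alpha(z)]=-[[y,x],\alpha(z)]=-\Courant{y,x,z}_T$ gives \eqref{HLHLYskewsym}; applying multiplicativity twice, $\alpha(\Courant{x,y,z}_T)=[\alpha([x,y]),\alpha^2(z)]=[[\alpha(x),\alpha(y)],\alpha^2(z)]=\Courant{\alpha(x),\alpha(y),\alpha(z)}_T$, which is \eqref{HLYmultpl}. For \eqref{LY1}, since $\Courant{x,y,z}_T=[[x,y],\alpha(z)]$ the two cyclic sums appearing there coincide, so it suffices to prove $\underset{x,y,z}{\circlearrowleft}[[x,y],\alpha(z)]=0$; by skew-symmetry this cyclic sum equals $-\underset{x,y,z}{\circlearrowleft}[\alpha(x),[y,z]]$, which vanishes by \eqref{HomLieJacobiIdentity}. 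For \eqref{LY2}, expand $\Courant{[x,y],\alpha(z),\alpha(w)}_T=[[[x,y],\alpha(z)],\alpha^2(w)]$ and pull the fixed element $\alpha^2(w)$ out of the cyclic sum by bilinearity, which again reduces it to the vanishing of $\underset{x,y,z}{\circlearrowleft}[[x,y],\alpha(z)]$. Identity \eqref{LY3} is then a single application of the rewritten Jacobi identity: rewriting the left-hand side as $[\alpha([x,y]),[\alpha(z),\alpha(w)]]$ by multiplicativity and applying $[\alpha(a),[b,c]]=[[a,b],\alpha(c)]+[\alpha(b),[a,c]]$ with $a=[x,y]$, $b=\alpha(z)$, $c=\alpha(w)$ yields exactly $[\Courant{x,y,z}_T,\alpha^2(w)]+[\alpha^2(z),\Courant{x,y,w}_T]$.

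The one place that requires care --- and the main obstacle --- is the Hom-Leibniz-type identity \eqref{HLYa2homLeib}. Here the plan is: (i) using multiplicativity repeatedly, rewrite the left-hand side $\Courant{\alpha^2(x),\alpha^2(y),\Courant{z,w,t}_T}_T$ as $[\alpha^2([x,y]),[[\alpha(z),\alpha(w)],\alpha^2(t)]]$; (ii) apply the rewritten Jacobi identity with $a=\alpha([x,y])$, $b=\alpha([z,w])$, $c=\alpha^2(t)$ to split this as $[[\alpha([x,y]),\alpha([z,w])],\alpha^3(t)]+[\alpha^2([z,w]),[\alpha([x,y]),\alpha^2(t)]]$; (iii) identify the second summand with the third right-hand term $\Courant{\alpha^2(z),\alpha^2(w),\Courant{x,y,t}_T}_T$; (iv) expand $[\alpha([x,y]),\alpha([z,w])]$ by one further application of the rewritten Jacobi identity (the same substitution as in \eqref{LY3}) into $[[[x,y],\alpha(z)],\alpha^2(w)]+[\alpha^2(z),[[x,y],\alpha(w)]]$ and bracket with $\alpha^3(t)$; (v) recognise the two resulting terms as $\Courant{\Courant{x,y,z}_T,\alpha^2(w),\alpha^2(t)}_T$ and $\Courant{\alpha^2(z),\Courant{x,y,w}_T,\alpha^2(t)}_T$. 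This accounts for all three terms on the right. The computation is purely bookkeeping of powers of $\alpha$, with no identity beyond Hom-Jacobi and multiplicativity entering, so I expect the delicate point to be only steps (i)--(ii): arranging the $\alpha$-twists so that the Jacobi identity becomes applicable in the precise form $[\alpha(a),[b,c]]=[[a,b],\alpha(c)]+[\alpha(b),[a,c]]$.
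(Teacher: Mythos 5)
Your proposal is correct and follows essentially the same route as the paper: both proofs verify each axiom directly, reduce \eqref{LY1} and \eqref{LY2} to the vanishing cyclic sum $\underset{x,y,z}{\circlearrowleft}[[x,y],\alpha(z)]=0$, and handle \eqref{LY3} and \eqref{HLYa2homLeib} by combining multiplicativity with the Hom-Jacobi identity (you use its rewritten Leibniz form where the paper uses the cyclic form plus skew-symmetry, but the computations are the same). Your explicit check of the multiplicativity relation $\alpha(\Courant{x,y,z}_T)=\Courant{\alpha(x),\alpha(y),\alpha(z)}_T$ is a small addition the paper leaves implicit.
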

\begin{proof}The left-antisymmetry of $\Courant{\cdot,\cdot,\cdot}$ follows from the anti-symmetry of $[\cdot,\cdot]$. Hence the first part follows. For the second part and using the multiplicativity of $\alpha$ and \eqref{HomLieJacobiIdentity}, for all $x,y,z \in A$, 
\begin{align*}
    & \underset{x,y,z}{\circlearrowleft}[[x,y],\alpha(z)]+\underset{x,y,z}{\circlearrowleft}\Courant{x,y,z}=\underset{x,y,z}{\circlearrowleft}\Courant{x,y,z}=\underset{x,y,z}{\circlearrowleft}[[x,y],\alpha(z)]\\ 
    &\stackrel{\eqref{HomLieskewsym}}{=}-\underset{x,y,z}{\circlearrowleft}[\alpha(z),[x,y]]\stackrel{\eqref{HomLieJacobiIdentity}}{=} 0.
\end{align*}
Similarly, 
$\underset{x,y,z}{\circlearrowleft}\Courant{[x,y],\alpha(z),\alpha(w)}=\underset{x,y,z}{\circlearrowleft}[[[x,y],\alpha(z)],\alpha^2(w)]=[\underset{x,y,z}{\circlearrowleft}[[x,y],\alpha(z)],\alpha^2(w)]\stackrel{hj}{=} 0.
$
It holds that       
\begin{align*}
    &\Courant{\alpha(x),\alpha(y),[z,w]}-[\Courant{x,y,z},\alpha^2(w)]-[\alpha^2(z),\Courant{x,y,w}]\\&=[[\alpha(x),\alpha(y)],\alpha([z,w])]-[[[x,y],\alpha(z)],\alpha^2(w)]-[\alpha^2(z),[[x,y],\alpha(w)]]\\
    &\stackrel{\eqref{HLYmultpl}}{=} [[\alpha(x),\alpha(y)],\alpha([z,w])]-[[[x,y],\alpha(z)],\alpha(\alpha(w))]-[\alpha(\alpha(z)),[[x,y],\alpha(w)]]\\
    &\stackrel{\eqref{HomLieskewsym}}{=} [[\alpha(x),\alpha(y)],\alpha([z,w])]-[\alpha(\alpha(w)),[\alpha(z),[x,y]]]-[\alpha(\alpha(z)),[[x,y],\alpha(w)]] \\ 
    &\stackrel{\eqref{HomLieJacobiIdentity}}{=} [[\alpha(x),\alpha(y)],\alpha([z,w])]+[\alpha([x,y]),[\alpha(w),\alpha(z)]]\\
    &\stackrel{\eqref{HLAmultipl}-\eqref{HLYmultpl}}{=} [[\alpha(x),\alpha(y)],[\alpha(z),\alpha(w)])]-[[\alpha(x),\alpha(y)],[\alpha(z),\alpha(w)]]=0.
\end{align*}
Finally, using the multiplicativity condition  we obtain  
\begin{eqnarray*}
    \Courant{\alpha^2(x),\alpha^2(y),\Courant{z,w,t}}&=&[[\alpha^2(x),\alpha^2(y)],[[\alpha(z),\alpha(w)],\alpha^2(w)]]\\&\stackrel{\eqref{HomLieJacobiIdentity}\eqref{HLYmultpl}}{=}&-[\alpha[\alpha(z),\alpha(w)],[\alpha^2(t),[\alpha(x),\alpha(y)]]]-[\alpha^3(t),[[\alpha(x),\alpha(y)],[\alpha(z),\alpha(w)]]]\\&\stackrel{\eqref{HomLieskewsym}}{=}&[\alpha[\alpha(z),\alpha(w)],[[\alpha(x),\alpha(y)],\alpha^2(t)]]+[[[\alpha(x),\alpha(y)],[\alpha(z),\alpha(w)]],\alpha^3(t)]\\&\stackrel{\eqref{HomLieJacobiIdentity}-\eqref{HLYmultpl}}{=}&[[\alpha^2(z),\alpha^2(w)],\alpha[[x,y],\alpha(t)]]+[[[[x,y],\alpha(z)],\alpha^2(w)],\alpha^3(t)]\\&&+[[\alpha^2(z),[[x,y],\alpha(w)]],\alpha^3(t)]
\\&=&\Courant{\alpha^2(z),\alpha^2(w),\Courant{x,y,t}}+
\Courant{\Courant{x,y,z},\alpha^2(w),\alpha^2(t)}\\&&
+\Courant{\alpha^2(z),\Courant{x,y,w},\alpha^2(t)}.
\end{eqnarray*}
Hence, the ternary Hom–Nambu identity \eqref{HLYa2homLeib}  holds.
Thus, $(A,[\cdot,\cdot],\Courant{\cdot,\cdot,\cdot}_T,\alpha)$ 
is the multiplicative Hom-Lie-Yamaguti algebra.
\end{proof}
\begin{ex}It is shown in {\rm \cite{GaparayiIssa-IJA2012-TwistedgenLieYamalgs}},  that 
every Lie-Yamaguti algebra $(A, [\cdot,\cdot],\Courant{\cdot,\cdot,\cdot})$ can be twisted via a morphism $\alpha$ of $A$
into a multiplicative Hom-Lie Yamaguti algebra $(A,\alpha\circ[\cdot,\cdot],\alpha^2\circ\Courant{\cdot,\cdot,\cdot},\alpha)$. 
\end{ex}
Now we give the definition of a representation of a Hom-Lie-Yamaguti algebra.
\begin{defn}[\hspace{-3pt} \cite{ZhangLi}] \label{repHlieY}
Let $(A,[\cdot,\cdot],\Courant{\cdot,\cdot,\cdot },\alpha)$ be a Hom-Lie-Yamaguti algebra and $(V,\beta)$ be a Hom-linear space (that is a linear space with a linear map $\beta:V\rightarrow V)$. A {\bf representation } of $(A,[\cdot,\cdot],\Courant{\cdot,\cdot,\cdot },\alpha)$ on $(V,\beta)$ consists of a linear map $\rho:A \to gl(V)$ and a bilinear map $\theta:A\times A \to gl(V)$ such that for all $x_1,x_2,x_3,y_1,y_2,y_3 \in A$,
\begin{align}
&\label{RL1}\rho(\alpha(x_1))\circ \beta=\beta\circ\rho(x_1),
\quad\theta(\alpha(x_1),\alpha(x_2))\circ \beta=\beta\circ\theta(x_1,x_2),\\
&\label{RL5}D_{\rho,\theta}([x_1,x_2],\alpha(x_3))+D_{\rho,\theta}([x_2,x_3],\alpha(x_1))+D_{\rho,\theta}([x_3,x_1],\alpha(x_2))=0,\\
&\label{RL6}\theta([x_1,x_2],\alpha(y_1))\circ \beta=\theta(\alpha(x_1),\alpha(y_1))\rho(x_2)-\theta(\alpha(x_2),\alpha(y_1))\rho(x_1),\\
&\label{RL7}D_{\rho,\theta}(\alpha(x_1),\alpha(x_2))\rho(y_2)=\rho(\alpha^2(y_2))D_{\rho,\theta}(x_1,x_2)+\rho(\Courant{x_1, x_2, y_2})\circ\beta,\\
&\label{RL8}\theta(\alpha(x_1),[y_1, y_2])\circ \beta=\rho(\alpha^2(y_1))\theta(x_1, y_2)-\rho(\alpha^2(y_2))\theta(x_1,y_1),\\ \nonumber
&\label{RL9}D_{\rho,\theta}(\alpha^2(x_1),\alpha^2(x_2))\theta(y_1,y_2)\\
&=\theta(\alpha^2(y_1),\alpha^2(y_2))D_{\rho,\theta}(x_1,x_2)
+\theta(\Courant{x_1,x_2,y_1},\alpha^2(y_2))\circ\beta^2+\theta(\alpha^2(y_1),\Courant{x_1,x_2,y_2})\circ\beta^2,\\ \nonumber 
&\label{RL10}\theta(\alpha^2(x_1),\Courant{y_1, y_2, y_3})\circ\beta^2\\
&=\theta (\alpha^2(y_2), \alpha^2(y_3))\theta(x_1,y_1)
    -\theta (\alpha^2(y_1), \alpha^2(y_3))\theta(x_1,y_2) + D_{\rho,\theta} (\alpha^2(y_1), \alpha
    ^2(y_2))\theta(x_1,y_3),
\end{align}
where the bilinear map $D_{\rho,\theta}:A\times A \to gl(V)$ is given by
\begin{eqnarray}
 D_{\rho,\theta}(x,y):=\theta(y,x)-\theta(x,y)+\rho(\alpha(x))\rho(y)-
 \rho(\alpha(y))\rho(x)-\rho([x,y])\circ \beta, \quad \forall x,y \in A.\label{rep}
 \end{eqnarray}
We denote a representation of $A$ on $V$ by $(V,\rho,\theta,\beta)$. In this case, we also call $V$ to be an $A$-module.
\end{defn}
 If $\alpha=Id_A$ and $\beta=Id_V$, Definition \ref{repHlieY}  coincides with the notion of representation on a Lie-Yamaguti algebra given in \cite{Yamaguti0, Yamaguti}.  A \textbf{morphism of representation } from $V$
to $V'$ consist of a Hom-Lie-Yamaguti algebra morphism  $\phi : A \to A'$ and $\psi : V \to V'$ such that 
\begin{align}
    &\label{c1}\phi \circ \beta = \beta' \circ \psi,\\
    &\label{c3}\psi( \rho(x)u)=\rho'(\phi(x))\psi(u),\\ &\label{c2}\psi(\theta(x,y)u)=\theta'(\phi(x),\phi(y))\psi(u).
    \end{align}
\begin{re}
    Let $(A,[\cdot,\cdot],\Courant{\cdot,\cdot,\cdot},\alpha)$ be a Hom-Lie-Yamaguti algebra and $(V,\rho,\theta,\beta)$ its representation. If $\rho=0$ and the Hom-Lie-Yamaguti algebra $(A,\alpha)$ reduces to a Hom-Lie tripe system $(A,\Courant{\cdot,\cdot,\cdot},\alpha)$,  then $(V,\theta,\beta)$  is a representation of the Hom-Lie triple systems $(A,\Courant{\cdot,\cdot,\cdot},\alpha)$; If $\theta=0$, $D_{\rho,\theta}=0$ and the Hom-Lie-Yamaguti algebra $(A,\alpha)$ reduces to a Hom-Lie algebra $(A,[\cdot,\cdot],\alpha)$, then $(V,\rho,\beta)$ is a representation  of the Hom-Lie algebra $(A,[\cdot,\cdot],\alpha)$. So the above definition of a representation of a Hom-Lie-Yamaguti algebra is a natural generalization of representations of Hom-Lie algebras and Hom-Lie triple systems.
\end{re}
\begin{ex}
    Let $(A,[\cdot,\cdot],\Courant{\cdot,\cdot,\cdot},\alpha)$ be a Hom-Lie-Yamaguti algebra. We define $\ad:A \to \gl(A)$ and $\frkR :A\times A \to \gl(A)$ by $x \mapsto \ad_x$ and $(x,y) \mapsto \mathfrak{R}(x,y)$ respectively, where $\ad_x(z)=[x,z]$ and $\mathfrak{R}(x,y)z=\Courant{z,x,y}$ for all $z \in A$. Then $(A,\ad,\mathfrak{R},\alpha)$ is a representation of $A$ on itself, called the {\bf adjoint representation}.
\end{ex}
Representations of a Hom-Lie-Yamaguti algebra can be characterized by the semidirect
product Hom-Lie-Yamaguti algebras.

\begin{thm}
     Let $(A,[\cdot,\cdot],\Courant{\cdot,\cdot,\cdot},\alpha)$ be a Hom-Lie-Yamaguti algebra and $(V, \beta)$ be a Hom-linear space. Assume we have a linear 
map $\rho:A \to gl(V)$ and a bilinear map $\theta:A\times A \to gl(V)$. Then $(\rho,\theta)$ is a representation
of $A$ on $V$ if and only if $A\oplus V$ is a Hom-Lie-Yamaguti algebra under the following maps:
\begin{align*}
    &(\alpha+\beta )(x+u)=\alpha(x)+\beta(u),\\&[x+u,y+v]=[x,y]+\rho(x)v-\rho(y)u,\\&\Courant{x+u,y+v,z+w}=\Courant{x,y,z}+D(x,y)w-\theta(x,z)v+\theta(y,z)u.
\end{align*}
In this case, the Hom-Lie-Yamaguti algebra  $A\oplus V$ is called semidirect product of $A$ and $V$, which is denoted by $A\ltimes_{\rho,\theta} V$.
\end{thm}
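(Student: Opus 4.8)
The plan is to establish both implications at once, by a direct term-by-term verification of the defining identities \eqref{HLHLYskewsym}--\eqref{HLYa2homLeib} of Definition \ref{LY} for the candidate Hom-Lie-Yamaguti algebra $\big(A\oplus V,\,[\cdot,\cdot],\,\Courant{\cdot,\cdot,\cdot},\,\alpha+\beta\big)$. The key observation is that, once the structure maps are inserted, each of these identities splits into an $A$-valued component and a $V$-valued component; the $A$-valued component is, in every case, precisely the corresponding identity for $(A,[\cdot,\cdot],\Courant{\cdot,\cdot,\cdot},\alpha)$, hence holds by hypothesis. Thus all the content sits in the $V$-valued components, and the equivalence will follow by matching these against the representation axioms \eqref{RL1}--\eqref{RL10}: assuming $(V,\rho,\theta,\beta)$ is a representation forces every $V$-valued component to vanish, while conversely, reading the vanishing of these components coefficient by coefficient in the module variables reproduces \eqref{RL1}--\eqref{RL10}.

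In detail, I would substitute general elements $x+u,\,y+v,\,z+w,\,s+p,\,t+q\in A\oplus V$, expand using the given formulas for $[\cdot,\cdot]$ and $\Courant{\cdot,\cdot,\cdot}$ on $A\oplus V$ and the definition \eqref{rep} of $D=D_{\rho,\theta}$, and then, inside each identity, group the resulting $V$-valued terms according to which of $u,v,w,p,q$ they are linear in. The correspondence then comes out as follows. The skew-symmetries \eqref{HLHLYskewsym} are immediate. Identity \eqref{LY1} reduces, on the $V$-side, to exactly the defining relation \eqref{rep} for $D$, hence holds automatically for the chosen structure maps. Identity \eqref{LY2} yields \eqref{RL5} (from the coefficient passing through $D$) and \eqref{RL6} (from the coefficients of the remaining module variables). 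Identity \eqref{LY3} yields \eqref{RL7} and \eqref{RL8}. The fundamental ternary Hom-Nambu identity \eqref{HLYa2homLeib} yields \eqref{RL9} and \eqref{RL10}, together with an auxiliary $D$-derivation relation that is itself a formal consequence of \eqref{RL9} and the lower relations \eqref{RL5}--\eqref{RL8}. The Hom-linearity conditions \eqref{RL1} for $\rho$ and $\theta$ are what is needed for $\alpha+\beta$ to be compatible with the two brackets, and they appear precisely when matching the terms that carry $\alpha$ and $\beta$.

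The only genuinely heavy computation --- and thus the main obstacle --- is the fundamental identity \eqref{HLYa2homLeib}. After substitution, its $V$-valued part breaks into five blocks, one for each module variable, and each block is a sum of roughly a dozen terms in which $\rho$, $\theta$, and $D$ are composed with powers of $\alpha$ and $\beta$; carrying out the cancellations so that the block linear in $u$ (equivalently $v$) collapses to \eqref{RL10}, the blocks linear in $w$ and in $p$ collapse to \eqref{RL9}, and the block linear in $q$ collapses to the $D$-derivation relation, is where essentially all the effort lies, and it is here that one leans on \eqref{rep} and on \eqref{RL5}--\eqref{RL8} repeatedly. Everything before that --- skew-symmetry and identities \eqref{LY1}--\eqref{LY3} --- is short and mechanical, and once \eqref{HLYa2homLeib} has been dealt with, both directions of the stated equivalence follow simultaneously.
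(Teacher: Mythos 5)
The paper states this theorem without proof, so there is nothing to compare your argument against; what follows is an assessment of the proposal on its own terms. Your overall strategy --- expand each axiom of Definition \ref{LY} on $A\oplus V$, observe that the $A$-component of each identity is the corresponding axiom for $A$, and match the $V$-components (sorted by which module variable they are linear in) against \eqref{RL1}--\eqref{RL10} --- is the standard and correct one, and your dictionary (\eqref{LY1} $\leftrightarrow$ the defining relation \eqref{rep} for $D$, \eqref{LY2} $\leftrightarrow$ \eqref{RL5}--\eqref{RL6}, \eqref{LY3} $\leftrightarrow$ \eqref{RL7}--\eqref{RL8}, \eqref{HLYa2homLeib} $\leftrightarrow$ \eqref{RL9}--\eqref{RL10} plus an auxiliary $D$-relation) is essentially what a term-by-term expansion produces.

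Two points need attention before this is a complete proof. First, in the ``if'' direction the block of \eqref{HLYa2homLeib} linear in the fifth module variable produces
$D(\alpha^2(x_1),\alpha^2(x_2))D(x_3,x_4)=D(\Courant{x_1,x_2,x_3},\alpha^2(x_4))\circ\beta^2+D(\alpha^2(x_3),\Courant{x_1,x_2,x_4})\circ\beta^2+D(\alpha^2(x_3),\alpha^2(x_4))D(x_1,x_2)$,
which is not among \eqref{RL1}--\eqref{RL10}; you assert that it is a formal consequence of the listed axioms, but this is the one step that is not pure bookkeeping and must actually be carried out (expand the inner $D$ via \eqref{rep} and apply \eqref{RL7} and \eqref{RL9} to the resulting $\rho$- and $\theta$-terms), so as written your argument has a gap exactly where you locate ``essentially all the effort.'' Second, in the ``only if'' direction the conditions \eqref{RL1} do not follow from the axioms of Definition \ref{LY} applied to $A\oplus V$: they are equivalent to multiplicativity of $\alpha+\beta$, which is not an axiom of a plain Hom-Lie-Yamaguti algebra. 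Your own phrasing (``what is needed for $\alpha+\beta$ to be compatible with the two brackets'') concedes this; the stated equivalence therefore only holds if ``Hom-Lie-Yamaguti algebra'' is read as ``multiplicative Hom-Lie-Yamaguti algebra,'' an imprecision in the theorem itself that the proof should make explicit rather than absorb silently. Relatedly, the coefficient of the module variable in the fourth slot of \eqref{LY3} comes out as $\rho(\Courant{x,y,z})\circ\beta^{2}$, whereas \eqref{RL7} is printed with a single $\beta$; this is presumably a typo in \eqref{RL7}, but it means the literal matching you describe does not hold for that axiom as written and should be noted.
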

Motivated by Yamaguti's cohomology for Lie-Yamaguti algebras, let us recall the cohomology theory on Hom-Lie-Yamaguti algebras given in \cite{ZhangLi}. Let $(A,[\cdot,\cdot],\Courant{\cdot,\cdot,\cdot},\alpha)$ be a  Hom-Lie-Yamaguti algebra and $(V,\rho,\mu,\beta)$ a representation of $A$.  Denote by $C^{n}(A, V)$, the set of $n$-linear maps  $f:A\times\cdots \times A\to V$   such that the following conditions are satisfied:
\begin{eqnarray}
\label{eq:cochain01} f(\alpha(x_1),\cdots,\alpha(x_n))&=&\beta(f(x_1,\cdots,x_n)),\\
 f(x_1,\cdots,x_{2i-1},x_{2i}\cdots,x_n)&=&0,\ \  \mbox{if}\ \  x_{2i-1}=x_{2i}
\end{eqnarray}
for $2\leq 2i\leq
n$ (if $n$ is odd, then no such "skew-symmetry" condition is assumed for the last component $x_n$ with any other component $x_j$ for $1\leq j\leq n-1.$)
The set of $n$-cochains is defined by  
\begin{align*}
& C^n_{\rm HLieY}(A,V)\triangleq C^{2n}(A, V)\times C^{2n+1}(A, V)\\ 
&\triangleq
 \begin{cases}
\Hom(\underbrace{\wedge^2A\otimes \cdots \otimes \wedge^2A}_n,V)\times \Hom(\underbrace{\wedge^2A\otimes\cdots\otimes\wedge^2A}_{n}\otimes A,V), & \forall n\geqslant 1,\\
\Hom(A,V), &n=0.
\end{cases}
\end{align*}
For the coboundary map of $p$-cochains,
for any $(\huaF,\huaG)\in C^{2n}(A, V)\times C^{2n+1}(A, V)$
the coboundary operator $\delta:(\huaF,\huaG)\mapsto (\delta_{\textrm{I}}\huaF, \delta_{\textrm{II}}\huaG)$
is a mapping from $C^{2n}(A, V)\times C^{2n+1}(A, V)$ into $C^{2n+2}(A, V)\times C^{2n+3}(A, V)$ defined as follows:
\begin{align*}
 &(\delta_{\textrm{I}}f)(x_{1}, x_{2}, \cdots, x_{2n+2})\\
&=\rho(\alpha^{2n}(x_{2n+1}))g(x_{1}, \cdots, x_{2n}, x_{2n+2}))-\rho(\alpha^{2n}(x_{2n+2}))g(x_{1}, \cdots, x_{2n+1})\\ 
&\hspace{6cm} -g(\alpha(x_{1}), \cdots, \alpha(x_{2n}), [x_{2n+1},x_{2n+2}])\\
&+\sum\limits_{k=1}^{n}(-1)^{n+k+1}D(\alpha^{2n-1}(x_{2k-1}), \alpha^{2n-1}(x_{2k}))f(x_{1}, \cdots, \hat{x}_{2k-1}, \hat{x}_{2k}, \cdots, x_{2n+2})\\
&+\sum\limits_{k=1}^{n}\sum\limits_{j=2k+1}^{2n+2}(-1)^{n+k}f(\alpha^2(x_{1}), \cdots, \hat{x}_{2k-1}, \hat{x}_{2k}, \cdots, [x_{2k-1}, x_{2k}, x_{j}], \cdots, \alpha^2(x_{2n+2})),
\\
 &(\delta_{\textrm{II}}g)(x_{1}, x_{2}, \cdots, x_{2n+3})\\
&= \theta(\alpha^{2n}(x_{2n+2}), \alpha^{2n}(x_{2n+3}))g(x_{1},\cdots, x_{2n+1})-\theta( \alpha^{2n}(x_{2n+1}), \alpha^{2n}(x_{2n+3}))g(x_{1}, \cdots, x_{2n}, x_{2n+2})\\
&+\sum\limits_{k=1}^{n+1}(-1)^{n+k+1}D(\alpha^{2n}(x_{2k-1}), \alpha^{2n}(x_{2k}))g(x_{1}, \cdots, \hat{x}_{2k-1}, \hat{x}_{2k}, \cdots, x_{2n+3})\\
&+\sum\limits_{k=1}^{n+1}\sum\limits_{j=2k+1}^{2n+3}(-1)^{n+k}g(\alpha^2(x_{1}), \cdots, \hat{x}_{2k-1}, \hat{x}_{2k}, \cdots, \Courant{x_{2k-1}, x_{2k}, x_{j}}, \cdots, \alpha^2(x_{2n+3})).
\end{align*}
When $\alpha=\Id$, one recovers Yamaguti's cohomology for Lie-Yamaguti algebras in \cite{Yamaguti}.
\begin{lem}[\hspace{-1pt}\cite{ZhangLi}]
For  $n\geq 2$, the coboundary operator satisfies $\delta^2=0$. 
\end{lem}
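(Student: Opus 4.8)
The plan is to establish $\delta^2=0$ by direct expansion, in the way Yamaguti \cite{Yamaguti} proves it for Lie-Yamaguti algebras, the only modification being that one now carries the Hom-twists along. (A twisting argument reducing the Hom-case to the classical one is not available, since $\alpha$ need not be invertible, so a hands-on verification is the natural route.) Since the formula defining $\delta_{\mathrm{II}}\huaG$ does not involve $\huaF$, for a pair $(\huaF,\huaG)\in C^{2n}(A,V)\times C^{2n+1}(A,V)$ one has
\[
\delta^2(\huaF,\huaG)=\Bigl(\delta_{\mathrm I}\bigl(\delta_{\mathrm I}(\huaF,\huaG),\ \delta_{\mathrm{II}}\huaG\bigr),\ \ \delta_{\mathrm{II}}\bigl(\delta_{\mathrm{II}}\huaG\bigr)\Bigr),
\]
so the lemma reduces to the two identities $\delta_{\mathrm{II}}\bigl(\delta_{\mathrm{II}}\huaG\bigr)=0$ and $\delta_{\mathrm I}\bigl(\delta_{\mathrm I}(\huaF,\huaG),\delta_{\mathrm{II}}\huaG\bigr)=0$. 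In both, the multiplicativity \eqref{HLYmultpl} and the equivariance relations \eqref{RL1} are used constantly, usually tacitly, to make the various powers $\alpha^{2n},\alpha^{2n-1},\alpha^2$ and $\beta^2,\beta$ occurring in the coboundary line up before terms may be compared; these same relations are what make $\delta$ well defined into the stated cochain spaces.

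First I would settle $\delta_{\mathrm{II}}^2=0$ — this is the assertion that the ``II-part'' $\delta_{\mathrm{II}}$, which restricts to Yamaguti's Hom-Lie-triple-system differential when $\rho=0$, squares to zero. Expanding $(\delta_{\mathrm{II}}\delta_{\mathrm{II}}\huaG)(x_1,\dots,x_{2n+5})$ and unfolding every $D_{\rho,\theta}$ by its definition \eqref{rep} produces several families of terms, grouped by their ``head'': a single $\theta$ or $\rho$, a binary composition of these, or a term in which a triple bracket $\Courant{x_i,x_j,x_k}$ has been substituted into an argument of $\huaG$. Sorting the terms by which variables occupy the distinguished slots, one checks that each one is cancelled — against a partner in another family, inside a cyclic block whose vanishing is one of the Hom-Lie-Yamaguti axioms \eqref{LY1}--\eqref{HLYa2homLeib}, or by one of the representation axioms (chiefly \eqref{RL8}, \eqref{RL9}, \eqref{RL10}). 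The alternating signs $(-1)^{n+k}$ in $\delta_{\mathrm{II}}$ are arranged precisely so that these cancellations are exact.

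Next I would treat $\delta_{\mathrm I}\bigl(\delta_{\mathrm I}(\huaF,\huaG),\delta_{\mathrm{II}}\huaG\bigr)=0$. Expand the outer $\delta_{\mathrm I}$ and split the output into \emph{$\huaF$-terms} — those issuing from the $D_{\rho,\theta}$-summand of $\delta_{\mathrm I}$ acting on $\delta_{\mathrm I}(\huaF,\huaG)$, and from $\delta_{\mathrm I}(\huaF,\huaG)$ evaluated with an inserted ternary bracket — and \emph{$\huaG$-terms} — everything else, namely the leading terms of the outer $\delta_{\mathrm I}$ (two $\rho$-headed, one with an inserted binary bracket) applied to $\delta_{\mathrm{II}}\huaG$, together with the $\rho$-, $\theta$- and $D$-headed pieces contributed by the inner $\delta_{\mathrm I}$ and $\delta_{\mathrm{II}}$. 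The $\huaF$-terms cancel by the same combinatorics as in the previous step, shifted by one slot; the $\huaG$-terms cancel by using \eqref{rep} to expand $D_{\rho,\theta}$ and then invoking \eqref{RL5}, \eqref{RL6}, \eqref{RL7}, \eqref{RL8} together with \eqref{LY1}, \eqref{LY2}, \eqref{LY3}. The decisive identity is \eqref{RL7}: rewriting $D_{\rho,\theta}(\alpha(x),\alpha(y))\rho(z)$ as $\rho(\alpha^2(z))D_{\rho,\theta}(x,y)+\rho(\Courant{x,y,z})\circ\beta$ lets the $\rho$-headed leading terms telescope against the $D$-headed terms that come out of the inner $\delta_{\mathrm I}$.

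The real difficulty is purely combinatorial: a double application of $\delta$ generates dozens of term-types, the cancellations occur in cyclic-sum blocks rather than pair by pair, and each time two terms are to be combined one must first check that their composite $\alpha$- and $\beta$-twists agree — the recurring, silent use of \eqref{HLYmultpl} and \eqref{RL1}. I would therefore organise the verification as a bookkeeping table, one row per term produced by $\delta\circ\delta$, recording its sign, the structural identity (from Definition \ref{repHlieY} or from \eqref{LY1}--\eqref{HLYa2homLeib}) that annihilates it, and its cancellation partner; once the table closes, $\delta^2=0$ holds for all $n\ge2$. As a final sanity check, specialising $\alpha=\beta=\Id$ collapses the table to Yamaguti's original proof of $\delta^2=0$ for Lie-Yamaguti algebras.
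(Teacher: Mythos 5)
The paper does not actually prove this lemma: it is imported verbatim, with a citation, from Zhang--Li \cite{ZhangLi}, so there is no in-paper proof to compare yours against. Judged on its own terms, your structural setup is correct and is the route the cited source takes: you rightly observe that $\delta_{\mathrm I}$ depends on the whole pair while $\delta_{\mathrm{II}}$ sees only the odd component, so that $\delta^2(\huaF,\huaG)=\bigl(\delta_{\mathrm I}(\delta_{\mathrm I}(\huaF,\huaG),\delta_{\mathrm{II}}\huaG),\ \delta_{\mathrm{II}}(\delta_{\mathrm{II}}\huaG)\bigr)$, reducing the claim to two identities; the identities you single out (\eqref{RL5}--\eqref{RL10} together with \eqref{LY1}--\eqref{HLYa2homLeib}, with \eqref{RL7} doing the telescoping between the $\rho$-headed leading terms and the $D$-headed terms from the inner differential, and \eqref{eq:cochain01} and \eqref{RL1} silently aligning the $\alpha$- and $\beta$-powers) are exactly the ones that drive the classical Yamaguti computation and its Hom-deformation; and the remark that one cannot simply twist away $\alpha$ because it need not be invertible is apt.

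The gap is that the proof is never executed. For a lemma of this kind the entire mathematical content \emph{is} the term-by-term cancellation, and your write-up asserts it rather than exhibits it: the ``bookkeeping table'' is promised, the claim that ``the alternating signs $(-1)^{n+k}$ are arranged precisely so that these cancellations are exact'' is exactly the statement to be proved, and no representative block of terms is actually shown to vanish. This matters here more than it might elsewhere, because the coboundary formulas as printed in this paper contain enough typographical noise (e.g.\ the $f$/$g$ mismatch in the displayed $\delta_{\mathrm I}$, the exponent $\alpha^{2n-1}$ versus $\alpha^{2n}$ in the two $D$-sums) that one cannot take the sign and twist bookkeeping on faith; a reader, or you, would need to carry out at least the $n=2$ case explicitly, or reproduce the full induction from \cite{ZhangLi}, before the lemma can be considered established. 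As it stands your text is a correct and well-organised plan for a proof, not a proof.
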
 
Define 
the $(2n, 2n+1)$-cohomology group  of a Hom-Lie-Yamaguti algebra  $A$ with coefficients in $V$ to be the quotient space
$$H^{2n}(A, V)\times H^{2n+1}(A, V)\triangleq(Z^{2n}(A, V)\times Z^{2n+1}(A, V))/(B^{2n}(T, V)\times B^{2n+1}(T, V)).$$

 For any $\huaF \in C^1_{\rm HLY}(A,V)$, the coboundary map
$\delta:C^1_{\rm HLY}(A,V)\to C^2_{\rm HLY}(A,V), $
is defined by
\begin{eqnarray*}
\label{1cochain}
\Big(\delta_{\rm I}(\huaF)\Big)(x,y)&=&\rho(x)\huaF(y)-\rho(y)\huaF(x)-\huaF([x,y]),\\
~ \label{2cochain}\Big(\delta_{\rm II}(\huaF)\Big)(x,y,z)&=&D_{\rho,\theta}(x,y)\huaF(z)+\theta(y,z)\huaF(x)-\theta(x,z)\huaF(y)-\huaF(\Courant{x,y,z}),\quad \forall x,y, z\in A.
\end{eqnarray*}

Let $Z^{2n}(T, V)\times Z^{2n+1}(A, V)$ be the subspace of $C^{2n}(A, V)\times C^{2n+1}(A, V)$ spanned by $(\huaF, \huaG)$ such that $\delta(\huaF, \huaG)=0$.
This is called the space of cocycles. The space $B^{2n}(A, V)\times B^{2n+1}(A, V)=\delta(C^{2n-2}(T, V)\times C^{2n-1}(A, V))$ is called the space of coboundaries.
In conclusion, we obtain a cochain complex whose cohomology group is called cohomology group of a Hom-Lie-Yamaguti algebra  $A$ with coefficients in $V$.
A pair $(\huaF,\huaG)\in C^2_{\rm LieY}(A,V)\times C^{3}(A,V)$ is a  cocycle if $\delta(\huaF,\huaG)=(0,0)$
 which is equivalent to
\begin{align*}   
& 0=\delta_I(\huaF,\huaG)(x_1,x_2,x_3,x_4)=\rho(\alpha^2(x_3))\huaG(x_1,x_2,x_4)-\rho(\alpha^2(x_4))\huaG(x_1,x_2,x_3)-\huaG(\alpha(x)_1,\alpha(x_2),[x_3,x_4])\\
&\hspace{5cm} +D_{\rho,\theta}(\alpha(x_1),\alpha(x_2))\huaF(x_3,x_4)-D_{\rho,\theta}(\alpha(x_3),\alpha(x_4))\huaF(x_1,x_2) \\
&\hspace{5cm}
-\huaF(\Courant{x_1,x_2,x_3}, \alpha^2(x_4))-\huaF(\alpha^2(x_3),\Courant{x_1,x_2,x_4}),\\   
& 0=\delta_{II}(\huaF,\huaG)(x_1,x_2,x_3,x_4,z)=\theta(\alpha^2(x_4),\alpha^2(z))\huaG(x_1,x_2,x_3)-\theta(\alpha^2(x_3),\alpha^2(z))\huaG(x_1,x_2,x_4)\\
&\hspace{5cm} +D_{\rho,\theta}(\alpha^2(x_1),\alpha^2(x_2))\huaG(x_3,x_4,z)-D_{\rho,\theta}(\alpha^2(x_3),\alpha^2(x_4))\huaG(x_1,x_2,z)\\
&\hspace{5cm} -\huaG(\Courant{x_1,x_2,x_3},\alpha^2(x_4),\alpha^2(z))-
\huaG(\alpha^2(x_3),\Courant{x_1,x_2,x_4},\alpha^2(z))\\
&\hspace{5cm} +\huaG(\alpha^2(x_1),\alpha^2(x_2),\Courant{x_3,x_4,z})-\huaG(\alpha^2(x_3),\alpha^2(x_4),\Courant{x_1,x_2,z}).
\end{align*}

In \cite{ZhangLi}, the authors studies cohomology and deformation   of Hom-Lie-Yamaguti algebras. They
proved that a $(2,3)$-cocycle of a Hom-Lie-Yamaguti algebra  with
coefficients in the adjoint representation generated a $1$-parameter infinitesimal deformation of a Hom-Lie-Yamaguti algebra.
A  $(2,3)$-cocycle  on Hom-Lie-Yamaguti algebra  $(A,[\cdot,\cdot],\Courant{\cdot,\cdot,\cdot},\alpha)$ with coefficients on a represenation $(V,\rho,\theta,\beta)$ is a pair   $(\huaF,\huaG)$ satisfying
\begin{align}\label{cocycle1}
&\underset{x,y,z}{\circlearrowleft}\bigg(\huaF([x,y],\alpha(z))-\rho(\alpha(x))\huaF(y,z)+\huaG(x,y,z)\bigg)=0,\end{align}\begin{align}
\label{cocycle2}&\underset{x,y,z}{\circlearrowleft}\bigg(\theta(\alpha(x),\alpha(t))\huaF(y,z)+\huaG([x,y],\alpha(z),\alpha(t))\bigg)=0,\end{align}\begin{align}\label{cocycle3}
&D_{\rho,\theta}(\alpha(x),\alpha(y))\huaF(z,t)+\rho(\alpha^2(t))\huaG(x,y,z)-\huaF(\Courant{x,y,z},\alpha^2(t))\nonumber\\&\quad\quad\quad+\huaG(\alpha(x),\alpha(y),[z,t])-\rho(\alpha^2(z))\huaG(x,y,t)-\huaF(\alpha^2(z),\Courant{x,y,t})=0,\end{align}\begin{align} \label{cocycle4} &\nonumber\theta(\alpha^2(x)_4,\alpha^2(z))\huaG(x_1,x_2,x_3)-\theta(\alpha^2(x_3),\alpha^2(z))\huaG(x_1,x_2,x_4)+D_{\rho,\theta}(\alpha^2(x_1),\alpha^2(x_2))\huaG(x_3,x_4,z)\\ \nonumber-&D_{\rho,\theta}(\alpha^2(x_3),\alpha^2(x_4))\huaG(x_1,x_2,z)-\huaG(\Courant{x_1,x_2,x_3},\alpha^2(x_4),\alpha^2(z))-
\huaG(\alpha^2(x_3),\Courant{x_1,x_2,x_4},\alpha^2(z))\\+&\huaG(\alpha^2(x_1),\alpha^2(x_2),\Courant{x_3,x_4,z})-\huaG(\alpha^2(x_3),\alpha^2(x_4),\Courant{x_1,x_2,z})=0.
\end{align}
Note that the conditions \eqref{cocycle3} and \eqref{cocycle4} implies that $(\huaF,\huaG)$ is a cocycle.
\begin{thm} Let $(A,[\cdot,\cdot],\alpha)$ be a Hom-Lie algebra with respect a representation $(V,\rho,\beta)$. Then $(V,\theta_\rho,\beta)$ is representation of Hom-Lie-Yamaguti algebra $(A,[\cdot,\cdot],\Courant{\cdot,\cdot,\cdot}_T,\alpha)$ given in Theorem \ref{induced}, where 
\begin{equation}
\theta_\rho(x,y)=\rho(\alpha(y))\rho(x),\quad\forall x,y\in A.
\end{equation}
In addition, if
 $\huaF$ is a $2$-cocycle on a Hom-Lie algebra with respect  $(V,\rho,\beta)$. Then, $(\huaF,\huaG) $ is a $(2.3)$-cocycle of  Hom-Lie-Yamaguti algebra $(A,[\cdot,\cdot],\Courant{\cdot,\cdot,\cdot}_T,\alpha)$ given in Theorem \ref{induced} with respect  $(V,\theta_\rho,\beta)$ where \begin{equation}\huaG_{\rho,\huaF}(x,y,z)=\huaF([x,y],\alpha(z))-\rho(\alpha(z))\huaF(x,y)\quad\forall x,y,z\in A.\end{equation}
\end{thm}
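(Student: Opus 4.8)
The plan is to verify the two assertions by direct computation. The first asserts that $(\rho,\theta_\rho)$ is a representation of the induced Hom-Lie-Yamaguti algebra $(A,[\cdot,\cdot],\Courant{\cdot,\cdot,\cdot}_T,\alpha)$ of Theorem \ref{induced}, where $\Courant{x,y,z}_T=[[x,y],\alpha(z)]$, i.e.\ that $(\rho,\theta_\rho)$ satisfies \eqref{RL1}--\eqref{RL10}; the second asserts that $(\huaF,\huaG_{\rho,\huaF})$ satisfies the $(2,3)$-cocycle equations \eqref{cocycle1}--\eqref{cocycle4} with respect to this representation. The computation that drives everything is the evaluation of the operator \eqref{rep} attached to $\theta_\rho$: since $(V,\rho,\beta)$ is a representation of the Hom-Lie algebra $(A,[\cdot,\cdot],\alpha)$, the terms $\rho(\alpha(x))\rho(y)-\rho(\alpha(y))\rho(x)-\rho([x,y])\circ\beta$ in \eqref{rep} vanish by \eqref{RLIE}, so that
\[
D_{\rho,\theta_\rho}(x,y)=\theta_\rho(y,x)-\theta_\rho(x,y)=\rho(\alpha(x))\rho(y)-\rho(\alpha(y))\rho(x)=\rho([x,y])\circ\beta .
\]
Together with $\Courant{x,y,z}_T=[[x,y],\alpha(z)]$, this identity turns every occurrence of $D$ and of the ternary bracket into $\rho$'s of iterated binary brackets, so that all of \eqref{RL1}--\eqref{RL10} and \eqref{cocycle1}--\eqref{cocycle4} become identities inside the Hom-Lie algebra $A$ and its representation $V$.

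For the representation axioms I would proceed in increasing order of difficulty. Conditions \eqref{RL1} follow from \eqref{RLIE}, applied to $\rho$ and to the composite $\theta_\rho$, together with multiplicativity $\alpha([x,y])=[\alpha(x),\alpha(y)]$. Condition \eqref{RL5} is immediate: after the substitution above its left-hand side is $\rho\big(\underset{x_1,x_2,x_3}{\circlearrowleft}[[x_1,x_2],\alpha(x_3)]\big)\circ\beta$, which vanishes by the Hom-Lie Jacobi identity \eqref{HomLieJacobiIdentity}. Condition \eqref{RL6} reduces to two applications of \eqref{RLIE}. Conditions \eqref{RL7}--\eqref{RL10} are verified by expanding both sides with the $D$-identity, rewriting each $\Courant{\cdot,\cdot,\cdot}_T$ as an iterated binary bracket, and repeatedly applying \eqref{RLIE} and \eqref{HomLieJacobiIdentity} (in the rewritten form $[\alpha(x),[y,z]]=[[x,y],\alpha(z)]+[\alpha(y),[x,z]]$, and, for \eqref{RL9} and \eqref{RL10}, also the ternary Hom--Nambu identity \eqref{HLYa2homLeib} for $\Courant{\cdot,\cdot,\cdot}_T$ already established in Theorem \ref{induced}) to match terms. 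This is the Hom-analogue of the classical verification that a Lie representation $\rho$ induces the Lie-Yamaguti representation $(\rho,\theta_\rho)$ on the induced Lie-Yamaguti algebra, with the bookkeeping of $\alpha$ and $\beta$ added.

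For the second assertion I would substitute $\theta_\rho(x,y)=\rho(\alpha(y))\rho(x)$, the $D$-identity, and $\huaG_{\rho,\huaF}(x,y,z)=\huaF([x,y],\alpha(z))-\rho(\alpha(z))\huaF(x,y)$ into \eqref{cocycle1}--\eqref{cocycle4}. The elementary observation that makes this work is that, for a skew-symmetric $\huaF$, the two cyclic sums $\underset{x,y,z}{\circlearrowleft}\rho(\alpha(x))\huaF(y,z)$ and $\underset{x,y,z}{\circlearrowleft}\huaF([x,y],\alpha(z))$ coincide: each is invariant under the cyclic relabelling, and the Hom-Lie $2$-cocycle identity \eqref{CoLie} together with skew-symmetry of $\huaF$ identifies the first with the second. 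Using this, \eqref{cocycle1} collapses to $2\big(\underset{x,y,z}{\circlearrowleft}\huaF([x,y],\alpha(z))-\underset{x,y,z}{\circlearrowleft}\rho(\alpha(x))\huaF(y,z)\big)=0$, and \eqref{cocycle2} collapses after pulling $\rho(\alpha^2(t))$ out of the cyclic sum and observing that $\underset{x,y,z}{\circlearrowleft}\huaF([[x,y],\alpha(z)],\alpha^2(t))=\huaF\big(\underset{x,y,z}{\circlearrowleft}[[x,y],\alpha(z)],\alpha^2(t)\big)=0$ by \eqref{HomLieJacobiIdentity}. Equations \eqref{cocycle3} and \eqref{cocycle4} are handled in the same spirit: expand every $\huaG$ and $D$, use \eqref{RLIE} to bring the $\rho$'s of brackets into position, and apply \eqref{CoLie} to the terms of the form $\huaF(\cdot,[\cdot,\cdot])$; the surviving terms then cancel in pairs.

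The main obstacle I expect is purely combinatorial: \eqref{RL9}, \eqref{RL10} and \eqref{cocycle4} each involve the ternary bracket composed with itself, so after substitution they produce more than a dozen terms that must be grouped precisely. The non-routine part is choosing the right instances of \eqref{HomLieJacobiIdentity} and \eqref{HLYa2homLeib} so that the twisting maps $\alpha$ and $\beta$ end up in matching positions; I would keep this under control by always rewriting $\Courant{x,y,z}_T=[[x,y],\alpha(z)]$ first, reducing each identity to one inside $(A,[\cdot,\cdot],\alpha)$ with coefficients in $(V,\rho,\beta)$, where only \eqref{HomLieJacobiIdentity}, \eqref{RLIE}, \eqref{CoLie} and multiplicativity are needed.
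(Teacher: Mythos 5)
Your proposal is correct and follows essentially the same route as the paper: compute $D_{\rho,\theta_\rho}(x,y)=\rho([x,y])\circ\beta$ from \eqref{RLIE}, reduce each representation axiom and each cocycle identity to a statement inside $(A,[\cdot,\cdot],\alpha)$ and $(V,\rho,\beta)$, and finish with the Hom-Jacobi identity together with the coincidence of the two cyclic sums supplied by \eqref{CoLie}. (Your sign $D_{\rho,\theta_\rho}=+\rho([x,y])\circ\beta$ is in fact the correct one --- the paper's displayed $-\rho([x,y])\circ\beta$ is a slip that is immaterial for the cyclic-sum argument --- and you give somewhat more detail on the conditions the paper dismisses with ``similarly''.)
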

\begin{proof} Note that, for all $x,y\in A$, 
\begin{eqnarray*}
 D_{\rho,\theta}(x,y)&=&\theta_\rho(y,x)-\theta_\rho(x,y)+\rho(\alpha(x))\rho(y)-
 \rho(\alpha(y))\rho(x)-\rho([x,y])\circ \beta\\&=&\rho(\alpha(x))\rho(y)-\rho(\alpha(y))\rho(x)+\rho(\alpha(x))\rho(y)-
 \rho(\alpha(y))\rho(x)-\rho([x,y])\circ \beta\\&=& -\rho([x,y])\circ \beta.\label{rep}
 \end{eqnarray*}
Thus, apply the Hom-Jacobi identity, for all $x_1,x_2,x_3\in A,$ we have
 \begin{align*}
     &D_{\rho,\theta}([x_1,x_2],\alpha(x_3))+D_{\rho,\theta}([x_2,x_3],\alpha(x_1))+D_{\rho,\theta}([x_3,x_1],\alpha(x_2))\\&=-\big(\rho([[x_1,x_2],\alpha(x_3)]+\rho([[x_2,x_3],\alpha(x_1)]+\rho([[x_3,x_1],\alpha(x_2)]) \big)\\&=-\circlearrowleft_{x_1,x_2,x_3}\rho([[x_1,x_2],\alpha(x_3)])=0.
 \end{align*}
 Similarly, we can prove the other conditions.
 Now, we add and show that $(\huaF,\huaG) $ is a $(2.3)$-cocycle, since $\huaF$ is a $2$-cocycle on a Hom-Lie algebra with respect  $(V,\rho,\beta)$, for any $x, y, z \in A $, we have
 \begin{align*}
&\underset{x,y,z}{\circlearrowleft}\bigg(\huaF([x,y],\alpha(z))-\rho(\alpha(x))\huaF(y,z)+\huaG(x,y,z)\bigg)\\&=\underset{x,y,z}{\circlearrowleft}\bigg(\huaF([x,y],\alpha(z))-\rho(\alpha(x))\huaF(y,z)+\huaF([x,y],\alpha(z))-\rho(\alpha(z))\huaF(x,y)\bigg)\\&=2\underset{x,y,z}{\circlearrowleft}\bigg(\huaF([x,y],\alpha(z))-\rho(\alpha(x))\huaF(y,z)\bigg)=0.\end{align*}
The proof is finished.
\end{proof}
\section{Twisted $\mathcal O$-operators  on Hom-Lie-Yamaguti  algebras}\label{Sec3} In this section we give the Hom version of twisted  $\mathcal O$-operators on Lie-Yamaguti algebras as a generalization of $\mathcal O$-operators on Hom-Lie-Yamaguti algebras. In particular, we study  Reynolds operators on Hom-Lie-Yamaguti algebra of any weight.
\subsection{Weighted Reynolds operators}
\begin{defn}
    Let $(A,[\cdot,\cdot],\Courant{\cdot,\cdot,\cdot},\alpha)$ a Hom-Lie-Yamaguti algebra, and $\lambda,\mu\in\mathbb{K}$. A \textbf{$(\lambda,\mu)$-weighted Reynolds operator} on Hom-Lie-Yamaguti algebra is a linear map $R:A\to A$ satisfying for all $x,y,z \in A,$
\begin{align}
R\circ \alpha&=\alpha\circ R,\\
[Rx,Ry]&=R\Big([Rx,y]+[x,Ry]+\lambda[Rx,Ry]\Big),\\
\Courant{Rx,Ry,Rz}&=R\big(\Courant{Rx,Ry,z}+\Courant{Rx,y,Rz}+\Courant{x,Ry,Rz}+\mu \Courant{Rx,Ry,Rz}\big).
\end{align}
\end{defn}
\begin{ex}
A Rota-Baxter operator of weight zero on a Hom-Lie-Yamaguti algebra $(A,[\cdot,\cdot],\Courant{\cdot,\cdot,\cdot},\alpha)$ is a linear map $R : A\rightarrow A$
satisfying:
\begin{align}
    \nonumber&R\circ\alpha=\alpha\circ R,\\&
    [Rx,Ry]=R([Rx,y]+[x,Ry]),\\&\Courant{Rx,Ry,Rz}=R\big(\Courant{Rx,Ry,z}+\Courant{Rx,y,Rz}+\Courant{x,Ry,Rz}\big),\,  \forall \; x,y,z\in A.
\end{align}

Any Rota-Baxter operator on Hom-Lie-Yamaguti algebras is a $(\lambda,\mu)$-weighted 
Reynolds operator with  $(\lambda,\mu)=(0,0)$.
\end{ex}
\begin{re}

Note that $(A,[\cdot,\cdot],\Courant{\cdot,\cdot,\cdot},\alpha,R)$ is $(\lambda,\mu)$-weighted Reynolds  Hom-Lie-Yamaguti algebra. 
If $\alpha=id$ the above definition coincides with the notion of Rota-Baxter operators on Lie-Yamaguti algebra (see {\rm \cite{Fatma}}).
If $[x,y]=0$ for all $x,y\in A$ then the  $(\lambda,\mu)$-weighted Reynolds Hom-Lie-Yamaguti algebra  $(A, [\cdot,\cdot], \Courant{\cdot,\cdot,\cdot},\alpha,R)$ reduces to a $\mu$-weighted Reynolds Hom-Lie triple system  $(A,\Courant{\cdot,\cdot,\cdot},\alpha^2,R)$. On the other hand, if $\Courant{x,y,z}=0$  for all $x,y,z\in A$  then the $(\lambda,\mu)$-weighted Reynolds Hom-Lie-Yamaguti algebra  reduces to a $\lambda$-weighted Reynolds Hom-Lie algebra  $(A, [\cdot,\cdot],\alpha,R)$.
    
\end{re}

\begin{defn}
    Let $(A_1,[\cdot,\cdot]_1,\Courant{\cdot,\cdot,\cdot}_1,\alpha_1,R_1)$ and $(A_2,[\cdot,\cdot]_2,\Courant{\cdot,\cdot,\cdot}_2,\alpha_2,R_2)$ be two $(\lambda,\mu)$-weighted Reynolds  Hom-Lie-Yamaguti algebras. A {\bf morphism} from  $(A_1,[\cdot,\cdot]_1,\Courant{\cdot,\cdot,\cdot}_1,\alpha_1,R_1)$ to\\ $(A_2,[\cdot,\cdot]_2,\Courant{\cdot,\cdot,\cdot}_2,\alpha_2,R_2)$ is a linear map $\phi:A_1 \to A_2$ such that for all $x,y,z \in A$,
\begin{eqnarray*}
\phi\circ\alpha_1&=&\alpha_2\circ\phi,\\
\phi([x,y]_1)&=&[\phi(x),\phi(y)]_2,\\
~ \phi(\Courant{x,y,z}_1)&=&\Courant{\phi(x),\phi(y),\phi(z)}_2, \; and \; \phi \circ R_1=R_2 \circ \phi.
\end{eqnarray*}
\end{defn}
Let $(A,[\cdot,\cdot],\Courant{\cdot,\cdot,\cdot},\alpha,R)$ a $(\lambda,\mu)$-weighted Reynolds  Hom-Lie-Yamaguti algebra. Define the two linear maps by \begin{align}
[x,y]_R&=[Rx,y]+[x,Ry]+\lambda[Rx,Ry],\\\Courant{x,y,z}_R&=\Courant{Rx,Ry,z}+\Courant{Rx,y,Rz}+\Courant{x,Ry,Rz}+\mu \Courant{Rx,Ry,Rz}, \; \forall x,y,z \in A.
\end{align}
\begin{thm}
    $(A,[\cdot,\cdot]_R,\Courant{\cdot,\cdot,\cdot}_R,\alpha,R)$ is $(\lambda,\mu)$-weighted Reynolds Hom-Lie-Yamaguti algebra. Also, $R$ is morphism of $(\lambda,\mu)$-weighted Reynolds operators
 from $(A,[\cdot,\cdot]_R,\Courant{\cdot,\cdot,\cdot}_R,\alpha,R)$ to $(A,[\cdot,\cdot],\Courant{\cdot,\cdot,\cdot},\alpha,R)$, that is $R([x,y]_R)=[Rx,Ry]$ and $R(\Courant{x,y,z}_R)=\Courant{Rx,Ry,Rz}$.
\end{thm}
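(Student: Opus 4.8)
The plan is to verify directly, relation by relation, that the quintuple $(A,[\cdot,\cdot]_R,\Courant{\cdot,\cdot,\cdot}_R,\alpha,R)$ fulfils both the Hom-Lie-Yamaguti axioms \eqref{HLHLYskewsym}--\eqref{HLYa2homLeib} for the new brackets and the three defining identities of a $(\lambda,\mu)$-weighted Reynolds operator; morally this is the descendent construction attached to $R$ regarded as an $\mathcal O$-operator-type map for the adjoint representation, twisted by the weights. First I would record the two ``morphism'' identities
\[
R([x,y]_R)=[Rx,Ry],\qquad R(\Courant{x,y,z}_R)=\Courant{Rx,Ry,Rz}\qquad(\forall\,x,y,z\in A),
\]
which are nothing but the binary and ternary Reynolds axioms for $R$ on $(A,[\cdot,\cdot],\Courant{\cdot,\cdot,\cdot},\alpha)$ read from right to left after substituting the definitions of $[\cdot,\cdot]_R$ and $\Courant{\cdot,\cdot,\cdot}_R$. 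Together with the axiom $R\circ\alpha=\alpha\circ R$, these identities are precisely the assertion that $R$ is a morphism of $(\lambda,\mu)$-weighted Reynolds Hom-Lie-Yamaguti algebras from $(A,[\cdot,\cdot]_R,\Courant{\cdot,\cdot,\cdot}_R,\alpha,R)$ to $(A,[\cdot,\cdot],\Courant{\cdot,\cdot,\cdot},\alpha,R)$, once the former is known to be such an algebra; so the real content lies in the two points below.

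\textbf{The new brackets form a Hom-Lie-Yamaguti algebra.} Skew-symmetry \eqref{HLHLYskewsym} passes to $[\cdot,\cdot]_R$ and $\Courant{\cdot,\cdot,\cdot}_R$ termwise, and $\alpha([x,y]_R)=[\alpha x,\alpha y]_R$, $\alpha(\Courant{x,y,z}_R)=\Courant{\alpha x,\alpha y,\alpha z}_R$ follow from $R\alpha=\alpha R$ together with the (standing) multiplicativity of $\alpha$. For \eqref{LY1}, \eqref{LY2}, \eqref{LY3} and \eqref{HLYa2homLeib} I would use a two-step move: (i) whenever an $R$-bracket sits nested inside another $R$-bracket, collapse the inner one by the morphism identities above and push the outer one through by its definition; (ii) expand fully, so that every summand becomes an \emph{original} bracket evaluated on arguments of the form $R^{k}u$ with $k\in\{0,1,2\}$, with the occurrences of $\alpha^{2}$ moved inward via $R\alpha=\alpha R$. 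Reassembling the cyclic sums, the whole expression then reorganizes into a sum of instances of the corresponding Hom-Lie-Yamaguti identity of $A$ (together with the Hom-Lie Jacobi identity \eqref{HomLieJacobiIdentity} for the summands carrying a factor $\lambda$ or $\mu$), each of which vanishes. I would run through \eqref{LY1}, \eqref{LY2}, \eqref{LY3}, \eqref{HLYa2homLeib} in that order of increasing length.

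\textbf{$R$ is a $(\lambda,\mu)$-weighted Reynolds operator on the new algebra.} Here the morphism identities do all the work. The identity $R\circ\alpha=\alpha\circ R$ is the one we already have. For the binary Reynolds axiom of the new algebra, linearity of $R$ and the morphism identities give
\[
R\big([Rx,y]_R+[x,Ry]_R+\lambda[Rx,Ry]_R\big)=[R(Rx),Ry]+[Rx,R(Ry)]+\lambda[R(Rx),R(Ry)],
\]
and the right-hand side is exactly $[Rx,Ry]_R$ by the definition of $[\cdot,\cdot]_R$ applied to the pair $(Rx,Ry)$; the ternary Reynolds axiom follows in the same way from $R(\Courant{u,v,w}_R)=\Courant{Ru,Rv,Rw}$. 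The last two displayed equalities in the statement, $R([x,y]_R)=[Rx,Ry]$ and $R(\Courant{x,y,z}_R)=\Courant{Rx,Ry,Rz}$, are the morphism identities themselves, so this also finishes the ``morphism'' clause.

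The main obstacle is the ternary Hom--Nambu identity \eqref{HLYa2homLeib} for the $R$-brackets: after the collapse-and-expand move it unfolds into a large number of summands, organized by how many of the three ``left'' arguments carry an $R$ and by the powers of $\lambda$ and $\mu$, and the only delicate point is the combinatorial bookkeeping that sorts them into vanishing copies of \eqref{HLYa2homLeib}, \eqref{LY3} and \eqref{HomLieJacobiIdentity}; the remaining verifications are shorter instances of the same calculation.
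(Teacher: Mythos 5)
Your overall strategy --- expand the $R$-brackets, collapse each nested $R$-bracket via the morphism identities $R([x,y]_R)=[Rx,Ry]$ and $R(\Courant{x,y,z}_R)=\Courant{Rx,Ry,Rz}$ (which, as you correctly note, are just the Reynolds axioms read backwards), and regroup into instances of the axioms of the original algebra --- is exactly the route the paper takes for \eqref{LY2} and \eqref{LY3}, and your derivation of the Reynolds property of $R$ on the new brackets from the morphism identities is correct and tidier than the paper's remark that this part is ``easy''. The regrouping you describe does succeed for \eqref{LY2}, \eqref{LY3} and \eqref{HLYa2homLeib}, where the weighted all-$R$ terms appear with a single consistent coefficient ($\lambda+\mu$, resp.\ $2\mu$) on each side.

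There is, however, a genuine gap in the blanket claim that every identity ``reorganizes into a sum of instances of the corresponding Hom-Lie-Yamaguti identity of $A$, each of which vanishes'': this fails for \eqref{LY1}. Expanding with the collapse rule one gets
\[
[[x,y]_R,\alpha(z)]_R=[[Rx,Ry],\alpha(z)]+[[Rx,y],\alpha(Rz)]+[[x,Ry],\alpha(Rz)]+2\lambda[[Rx,Ry],\alpha(Rz)],
\]
the coefficient $2\lambda$ arising because the weighted term is produced once by the outer and once by the inner binary $R$-bracket, whereas $\Courant{x,y,z}_R$ carries its all-$R$ term with coefficient $\mu$ only. The summands with exactly two $R$'s among $x,y,z$ do regroup into three vanishing copies of \eqref{LY1}, but the all-$R$ part of the cyclic sum is
\[
2\lambda\underset{x,y,z}{\circlearrowleft}[[Rx,Ry],\alpha(Rz)]+\mu\underset{x,y,z}{\circlearrowleft}\Courant{Rx,Ry,Rz}=(\mu-2\lambda)\underset{x,y,z}{\circlearrowleft}\Courant{Rx,Ry,Rz},
\]
using \eqref{LY1} for the original brackets, and this does not vanish for a general Hom-Lie-Yamaguti algebra unless $\mu=2\lambda$. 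Your parenthetical appeal to the Hom-Lie Jacobi identity \eqref{HomLieJacobiIdentity} cannot close this: the binary bracket of a Hom-Lie-Yamaguti algebra does not satisfy \eqref{HomLieJacobiIdentity} on its own, only \eqref{LY1}, which already involves the ternary bracket. The compatibility $\mu=2\lambda$ is precisely the one surfacing in the proposition that follows this theorem in the paper. The paper's own proof verifies only \eqref{LY2} and \eqref{LY3} and dismisses the rest as routine, so you have not deviated from it in method; but to make the argument complete you must either carry out \eqref{LY1} honestly, which exposes the constraint on $(\lambda,\mu)$, or restrict the statement to the weights for which the leftover term vanishes.
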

\begin{proof}For $ x,y,z,w\in A$, 
\begin{align*}
&\circlearrowleft_{x,y,\alpha(z)}\Courant{[x,y]_R,\alpha(z),\alpha(w)}_R =\circlearrowleft_{x,y,\alpha(z)}
\Courant{[Rx,y]+[x,Ry]+\lambda[Rx,Ry],\alpha(z),\alpha(w)}_R\\
=&\circlearrowleft_{x,y,\alpha(z)}\Courant{[Rx,Ry],R\alpha(z),\alpha(w)}+
\circlearrowleft_{x,y,\alpha(z)}\Courant{[Rx,Ry],\alpha(z),R\alpha(w)}\\
&\ +\circlearrowleft_{x,y,\alpha(z)}\Courant{[Rx,y],R\alpha(z),R\alpha(w)}
+\circlearrowleft_{x,y,\alpha(z)}\Courant{[x,Ry],R\alpha(z),R\alpha(w)}\\
&\ +\lambda\circlearrowleft_{x,y,\alpha(z)}\Courant{[Rx,Ry],R\alpha(z),R\alpha(w)}
+\mu\circlearrowleft_{x,y,\alpha(z)}\Courant{[Rx,Ry],R\alpha(z),R\alpha(w)}\\
=&\circlearrowleft_{x,y,\alpha(z)}\Courant{[Rx,Ry],R\alpha(z),\alpha(w)}+\circlearrowleft_{x,y,\alpha(z)}\Courant{[Rx,Ry],\alpha(z),R\alpha(w)}\\
&\ +\circlearrowleft_{x,y,\alpha(z)}\Courant{[Rx,y],R\alpha(z),R\alpha(w)}
+\circlearrowleft_{x,y,\alpha(z)}\Courant{[x,Ry],R\alpha(z),R\alpha(w)}\\
&\ +(\lambda+\mu)\circlearrowleft_{x,y,\alpha(z)}\Courant{[Rx,Ry],R\alpha(z),R\alpha(w)}=0.
\end{align*}  
The identity \eqref{LY3} holds since
\begin{align*}
& \Courant{\alpha(x),\alpha(y),[z,w]_R}_R=\Courant{R(\alpha(x)),R(\alpha(y)),[z,w]_R}+\Courant{R(\alpha(x)),\alpha(y),R([z,w]_R)}\\&\ +\Courant{\alpha(x),R(\alpha(y)),R([z,w]_R)}+\mu\Courant{R(\alpha(x)),R(\alpha(y)),R([z,w]_R)}\\=&\Courant{R(\alpha(x)),R(\alpha(y)),[Rz,w]}+\Courant{R(\alpha(x)),R(\alpha(y)),[z,Rw]}+\lambda\Courant{R(\alpha(x)),R(\alpha(y)),[Rz,Rw]}\\&\ +\Courant{R(\alpha(x)),\alpha(y),[Rz,Rw]}+\Courant{\alpha(x),R(\alpha(y)),[Rz,Rw]}+\mu\Courant{R(\alpha(x)),R(\alpha(y)),[Rz,Rw]}\\=&\Courant{R(\alpha(x)),R(\alpha(y)),[Rz,w]}+\Courant{R(\alpha(x)),R(\alpha(y)),[z,Rw]}+\Courant{R(\alpha(x)),\alpha(y),[Rz,Rw]}\\&\ +\Courant{\alpha(x),R(\alpha(y)),[Rz,Rw]}+(\lambda+\mu)\Courant{R(\alpha(x)),R(\alpha(y)),[Rz,Rw]}.\\
 & [\Courant{x,y,z}_R,\alpha^2(w)]_R=[R(\Courant{x,y,z}_R),\alpha^2(w)]+[\Courant{x,y,z}_R,R(\alpha^2(w))]+\lambda[R(\Courant{x,y,z}_R),R(\alpha^2(w))]\\=&[\Courant{Rx,Ry,Rz},\alpha^2(w)]+[\Courant{Rx,Ry,z},R(\alpha^2(w))]+[\Courant{Rx,y,Rz},R(\alpha^2(w))]\\&\ +[\Courant{x,Ry,Rz},R(\alpha^2(w))]+\mu [\Courant{Rx,Ry,Rz},R(\alpha^2(w))]+\lambda[\Courant{Rx,Ry,Rz}),R(\alpha^2(w))]\\=&[\Courant{Rx,Ry,Rz},\alpha^2(w)]+[\Courant{Rx,Ry,z},R(\alpha^2(w))]+[\Courant{Rx,y,Rz},R(\alpha^2(w))]\\&\ +[\Courant{x,Ry,Rz},R(\alpha^2(w))]+(\lambda+\mu) [\Courant{Rx,Ry,Rz},R(\alpha^2(w))].
\\
  &  [\alpha^2(z),\Courant{x,y,w}_R]_R=[R(\alpha^2(z)),\Courant{x,y,w}_R]+[\alpha^2(z),R\Courant{x,y,w}_R]+\lambda[R(\alpha^2(z)),R\Courant{x,y,w}_R]\\
  =&[R(\alpha^2(z)),\Courant{Rx,Ry,w}]+[R(\alpha^2(z)),\Courant{Rx,y,Rw}]+[R(\alpha^2(z)),\Courant{x,Ry,Rw}]\\&\ 
  +[\alpha^2(z),\Courant{Rx,Ry,Rw}]+(\lambda+\mu)[R(\alpha^2(z)),\Courant{Rx,Ry,Rw}],
\end{align*}
and hence $$\Courant{\alpha(x),\alpha(y),[z,w]_R}_R-\big[\Courant{x,y,z}_R,\alpha^2(w)\big]_R-\big[\alpha^2(z),\Courant{x,y,w}_R\big]_R=0.$$
By direct computation, it is easy to
show the other identities, we obtain that  $(A,[\cdot,\cdot]_R,\Courant{\cdot,\cdot,\cdot}_R,\alpha)$ is a Hom-Lie-Yamaguti algebra.  Note that $R$ is morphisme of Hom-Lie-Yamaguti algebras. Then, it is easy to
show that $R$ is $(\lambda,\mu)$-weighted Reynolds  Hom-Lie-Yamaguti algebra. Therefore, $R$ is a morphism of $(\lambda,\mu)$-weighted Reynolds Hom-Lie-Yamaguti algebra.
\end{proof}
\begin{prop}
    Let $R:A \to A$ be a $\lambda$-weighted Reynolds operator on Hom-Lie algebra, then $R$ is $(\lambda,2\lambda)$-weighted Reynolds operator on the induced Hom-Lie-Yamaguti algebra $(A,[\cdot,\cdot],\Courant{\cdot,\cdot,\cdot}_T,\alpha)$ defined in the Example \ref{induced}.
\end{prop}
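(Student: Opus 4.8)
The plan is to verify the three defining relations of a $(\lambda,2\lambda)$-weighted Reynolds operator for the induced Hom-Lie-Yamaguti algebra $(A,[\cdot,\cdot],\Courant{\cdot,\cdot,\cdot}_T,\alpha)$, where $\Courant{x,y,z}_T=[[x,y],\alpha(z)]$. Since the binary bracket $[\cdot,\cdot]$ and the twisting map $\alpha$ are unchanged when passing to the induced structure, the equivariance $R\circ\alpha=\alpha\circ R$ and the binary identity $[Rx,Ry]=R\big([Rx,y]+[x,Ry]+\lambda[Rx,Ry]\big)$ are inherited verbatim from the hypothesis that $R$ is a $\lambda$-weighted Reynolds operator on $(A,[\cdot,\cdot],\alpha)$. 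Thus the only thing to prove is the ternary relation
\[
\Courant{Rx,Ry,Rz}_T=R\big(\Courant{Rx,Ry,z}_T+\Courant{Rx,y,Rz}_T+\Courant{x,Ry,Rz}_T+2\lambda\,\Courant{Rx,Ry,Rz}_T\big).
\]

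To establish this I would start from $\Courant{Rx,Ry,Rz}_T=[[Rx,Ry],\alpha(Rz)]$ and use $\alpha\circ R=R\circ\alpha$ to rewrite $\alpha(Rz)=R(\alpha(z))$. Setting $w:=[Rx,y]+[x,Ry]+\lambda[Rx,Ry]$, the binary Reynolds relation gives $[Rx,Ry]=Rw$, so a second application of the binary Reynolds relation yields
\[
\Courant{Rx,Ry,Rz}_T=[Rw,R(\alpha(z))]=R\big([Rw,\alpha(z)]+[w,R(\alpha(z))]+\lambda[Rw,R(\alpha(z))]\big).
\]
It then remains to expand the three inner brackets: $[Rw,\alpha(z)]=[[Rx,Ry],\alpha(z)]=\Courant{Rx,Ry,z}_T$; next $[w,R(\alpha(z))]=[[Rx,y],R(\alpha(z))]+[[x,Ry],R(\alpha(z))]+\lambda[[Rx,Ry],R(\alpha(z))]$, which, again using $R\alpha=\alpha R$, equals $\Courant{Rx,y,Rz}_T+\Courant{x,Ry,Rz}_T+\lambda\,\Courant{Rx,Ry,Rz}_T$; and finally $\lambda[Rw,R(\alpha(z))]=\lambda\,\Courant{Rx,Ry,Rz}_T$. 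Adding the two contributions of $\lambda\,\Courant{Rx,Ry,Rz}_T$ produces exactly the right-hand side above with coefficient $2\lambda$.

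I expect no genuine obstacle: the argument is just a double application of the binary Reynolds identity combined with the equivariance $R\circ\alpha=\alpha\circ R$. The single point that deserves care is the bookkeeping of the two \emph{independent} sources of the $\lambda$-term — one from the weight summand $\lambda[Rx,Ry]$ inside $w$ and one from the outer Reynolds relation — which combine to give precisely the weight $2\lambda$ of the ternary identity; this is the conceptual reason the weights transform as $\lambda\mapsto(\lambda,2\lambda)$. (If the morphism statement were wanted as well, it would follow from the preceding theorem on $(\lambda,\mu)$-weighted Reynolds Hom-Lie-Yamaguti algebras, but the proposition as stated only requires the three identities just verified.)
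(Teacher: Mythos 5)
Your proposal is correct and follows essentially the same route as the paper: both substitute $[Rx,Ry]=R\big([Rx,y]+[x,Ry]+\lambda[Rx,Ry]\big)$ into the first slot of $\Courant{Rx,Ry,Rz}_T=[[Rx,Ry],\alpha(Rz)]$, apply the binary Reynolds identity a second time together with $R\circ\alpha=\alpha\circ R$, and collect the two $\lambda$-terms into the coefficient $2\lambda$. Your explicit remark that the unary and binary axioms are inherited verbatim, and your explanation of why the weight doubles, are welcome clarifications that the paper leaves implicit.
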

\begin{proof}
For $ x,y, z\in A$,  
\begin{align*}
  &  \Courant{Rx,Ry,Rz}=[[Rx,Ry],R(\alpha(z))]=[R([Rx,y]+[x,Ry]+\lambda[Rx,Ry]),R\alpha(z)]\\&=R\big([[Rx,Ry],\alpha(z)]+[[Rx,y],R\alpha(z)]+[[x,Ry],R\alpha(z)]+\lambda[[Rx,Ry],R\alpha(z)]+\lambda[[Rx,Ry]),R\alpha(z)]\big)\\&=R\big([[Rx,Ry],\alpha(z)]+[[Rx,y],R\alpha(z)]+[[x,Ry],R\alpha(z)]+2\lambda[[Rx,Ry],R\alpha(z)]\big)\\
  &=R\big(\Courant{Rx,Ry,z}+\Courant{Rx,y,Rz}+
  \Courant{x,Ry,Rz}+2\lambda\Courant{Rx,Ry,Rz}\big),
\end{align*} 
and $R$ is $(\lambda,2\lambda)$-weighted Reynolds operator on induced Hom-Lie-Yamaguti algebra $(A,[\cdot,\cdot],\Courant{\cdot,\cdot,\cdot},\alpha)$.
\end{proof}
\subsection{Twisted $\mathcal{O}$-operators}
In the following, we introduce the notion of twisted $\mathcal{O}$-operator on Hom-Lie-Yamaguti algebra
and provide some new constructions.
\begin{defn}\label{defn-h-tw}
A linear map $T: V\rightarrow A$ is said to be a {\bf $(\huaF,\huaG)$-twisted $\mathcal O$-operator} or \textbf{generalized Reynolds operator} on a Hom-Lie-Yamaguti algebra $(A,[\cdot,\cdot],\Courant{\cdot,\cdot,\cdot},\alpha)$ with respected to the representation $(V,\rho,\theta,\beta)$ if $T$ satisfies
\begin{align}\label{R1}
T\circ \beta&=\alpha \circ T,\\ \label{R2}
[Tu,Tv]&=T\Big(\rho(Tu)v-\rho(Tv)u+\huaF(Tu,Tv)\Big),\\ \label{R3} \Courant{Tu,Tv,Tw}&=T\Big(D_{\rho,\theta}(Tu,Tv)w+\theta(Tv,Tw)u-\theta(Tu,Tw)v+\huaG(Tu,Tv,Tw)\Big).
\end{align}
\end{defn}
The notion of twisted $\mathcal{O}$-operator is also called twisted generalized Reynolds operator or twisted Rota-Baxter operator or twisted
Kupershmidt operator. 
If $\alpha=\Id_A$ and $\beta=\Id_V$, then the Definition  coincides with the notion of twisted $\mathcal{O}$-operator on 
Hom-Lie-Yamaguti algebra $(A,[\cdot,\cdot],\Courant{\cdot,\cdot,\cdot},\alpha)$ with respect to the representation $(V,\rho,\theta,\beta)$.
\begin{defn}\label{Tmorphism}
   A morphism of twisted $\mathcal{O}$-operators from $T$ to $T'$ consists of a morphism of representation  $(\phi,\psi)$ satisfying $T\psi=\phi T'.$
\end{defn}
\begin{ex}
   
Recall that an $\mathcal O$-operator on a Hom-Lie-Yamaguti algebra $(A,[\cdot,\cdot],\Courant{\cdot,\cdot,\cdot},\alpha)$ is a linear map $T : V\rightarrow A$
satisfying:
\begin{align}
    \nonumber&T\circ\beta=\beta\circ T,\\&
    [Tu,Tv]=T\big(\rho(Tu)v-\rho(Tv)u\big)\\&
\Courant{Tu,Tv,Tw}=T\big(D_{\rho,\theta}(Tu,Tv)w+\theta(Tv,Tw)u-\theta(Tu,Tw)v\big),\,  \forall \; x,y,z\in A.
\end{align}
Any $\mathcal{O}$-operator on a Hom-Lie-Yamaguti algebra is a $(\huaF,\huaG)$-twisted $\mathcal{O}$-operator with $(\huaF,\huaG) =(0,0)$.
\end{ex}
\begin{ex}
Any $(\lambda,\mu)$-weighted Reynolds operator   on a Hom-Lie-Yamaguti algebra is a $(\huaF,\huaG)$-twisted $\mathcal{O}$-operator where  $(\huaF,\huaG) =(\lambda[\cdot,\cdot],\mu\Courant{\cdot,\cdot,\cdot})$.
\end{ex}
In \cite{Xu} (see also \cite{Das1}),  the author  introduced a $\huaF$-twisted $\mathcal O$-operator on a Hom-Lie algebras with respect to a representation $(V,\rho,\beta)$, with a $2$-cocycle  $\huaF$ in the Chevalley-Eilenberg  cohomology, as a linear map $T  : V \to A $  satisfying 
 \begin{align*}
 &T \circ \beta= \alpha\circ T,\\
     &[T u,T v]=T\Big(\rho (T u)v-\rho (T v)u+ \huaF(T u,T v)\Big), \quad \forall u,v \in V.
 \end{align*}
 \begin{prop}
     Let $T: V\rightarrow A$ a
  $\huaF$-twisted $\mathcal{O}$-operator on a Hom-Lie algebras $(A,[\cdot,\cdot],\alpha)$ with respect to a representation $(V,\rho,\beta)$. Then $T$ is 
a $(\huaF,\huaG)$-twisted $\mathcal{O}$-operator on on the induced Hom-Lie-Yamaguti algebras given in Theorem \ref{induced}.
 \end{prop}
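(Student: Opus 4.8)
The plan is to verify the three axioms \eqref{R1}--\eqref{R3} of a $(\huaF,\huaG)$-twisted $\mathcal{O}$-operator for $T$ directly, taking the target to be the induced Hom-Lie-Yamaguti algebra $(A,[\cdot,\cdot],\Courant{\cdot,\cdot,\cdot}_T,\alpha)$ of Theorem~\ref{induced} (so $\Courant{x,y,z}_T=[[x,y],\alpha(z)]$), the representation to be $(V,\theta_\rho,\beta)$ with $\theta_\rho(x,y)=\rho(\alpha(y))\rho(x)$, and $\huaG$ to be the $3$-cochain $\huaG_{\rho,\huaF}(x,y,z)=\huaF([x,y],\alpha(z))-\rho(\alpha(z))\huaF(x,y)$ --- this is exactly the representation and $(2,3)$-cocycle data furnished by the theorem stated just before Section~\ref{Sec3}. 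Axioms \eqref{R1} and \eqref{R2}, namely $T\circ\beta=\alpha\circ T$ and $[Tu,Tv]=T\big(\rho(Tu)v-\rho(Tv)u+\huaF(Tu,Tv)\big)$, require nothing to prove: they are literally the two defining conditions of an $\huaF$-twisted $\mathcal{O}$-operator on the Hom-Lie algebra $(A,[\cdot,\cdot],\alpha)$. So only \eqref{R3} carries content.

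For \eqref{R3}, I would start from $\Courant{Tu,Tv,Tw}_T=[[Tu,Tv],\alpha(Tw)]$, substitute $[Tu,Tv]=T\big(\rho(Tu)v-\rho(Tv)u+\huaF(Tu,Tv)\big)$ from \eqref{R2}, rewrite $\alpha(Tw)=T(\beta(w))$ using \eqref{R1}, and apply \eqref{R2} a second time to the resulting bracket of two elements of $T(V)$. This gives
\[
\Courant{Tu,Tv,Tw}_T=T\Big(\rho([Tu,Tv])\beta(w)-\rho(\alpha(Tw))\big(\rho(Tu)v-\rho(Tv)u+\huaF(Tu,Tv)\big)+\huaF([Tu,Tv],\alpha(Tw))\Big).
\]
It then remains to recognize the argument of $T$: the terms $\rho(\alpha(Tw))\rho(Tu)v$ and $\rho(\alpha(Tw))\rho(Tv)u$ are $\theta_\rho(Tu,Tw)v$ and $\theta_\rho(Tv,Tw)u$ by definition of $\theta_\rho$; the pair $\huaF([Tu,Tv],\alpha(Tw))-\rho(\alpha(Tw))\huaF(Tu,Tv)$ is precisely $\huaG_{\rho,\huaF}(Tu,Tv,Tw)$; and the term $\rho([Tu,Tv])\circ\beta$ is rewritten in terms of $D_{\rho,\theta}(Tu,Tv)$ by means of the relation between $D_{\rho,\theta_\rho}$ and $\rho([\cdot,\cdot])\circ\beta$ established in that theorem, which itself rests on the Hom-Lie representation compatibility $\rho([x,y])\circ\beta=\rho(\alpha(x))\rho(y)-\rho(\alpha(y))\rho(x)$. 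Assembling these identifications turns the argument of $T$ into $D_{\rho,\theta}(Tu,Tv)w+\theta_\rho(Tv,Tw)u-\theta_\rho(Tu,Tw)v+\huaG_{\rho,\huaF}(Tu,Tv,Tw)$, which is exactly \eqref{R3}.

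The one genuinely delicate point is this last identification: one must keep careful track of signs and of the factor in $D_{\rho,\theta_\rho}(x,y)$ --- expanding $\theta_\rho(y,x)-\theta_\rho(x,y)=\rho(\alpha(x))\rho(y)-\rho(\alpha(y))\rho(x)=\rho([x,y])\circ\beta$ and feeding this into the definition of $D_{\rho,\theta}$ --- so that the $\rho([Tu,Tv])\circ\beta$ term coming out of the double application of \eqref{R2} matches the $D_{\rho,\theta}(Tu,Tv)w$ required by \eqref{R3}. Everything else is routine; in particular no identity of the trilinear bracket of $A$ is used beyond its defining formula $\Courant{x,y,z}_T=[[x,y],\alpha(z)]$.
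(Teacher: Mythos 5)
Your proposal is correct and follows essentially the same route as the paper: both proofs expand $\Courant{Tu,Tv,Tw}_T=[[Tu,Tv],\alpha(Tw)]$ by applying the Hom-Lie twisted $\mathcal{O}$-operator identity twice and then identify the resulting terms with $\theta_\rho$, $\huaG_{\rho,\huaF}$ and $D_{\rho,\theta_\rho}(x,y)=\rho([x,y])\circ\beta$. If anything, you are more careful than the paper's own computation, which drops the $\alpha$ on $Tw$ and the accompanying $\beta$'s; your explicit use of $\alpha(Tw)=T\beta(w)$ and your sign-check on $D_{\rho,\theta_\rho}$ are exactly the points where the printed proof is sloppy.
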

 \begin{proof}
   For $u,v,w \in V$, 
    \begin{multline*}
        \Courant{Tu,Tv,Tw}=[[Tu,Tv],Tw] = [T\Big(\rho (T u)v-\rho (T v)u+ \huaF(T u,T v)\Big) ,Tw]\\
        =T\Big(\rho([Tu,Tv])w-\rho(Tw)\rho (T u)v+\rho(Tw)\rho (T v)u- \rho(Tw)\huaF(T u,T v)
        +\huaF([T u,T v],Tw)\Big)\\
        =T\Big(D_{\rho,\theta}([Tu,Tv])w-\theta_\rho(Tu,T w)v+\theta_\rho(Tv,T w)u
        +\huaG_{\rho,\huaF}(T u,T v,Tw)\Big).
    \end{multline*}
    Thus, $T$ is  
a $(\huaF,\huaG)$-twisted $\mathcal{O}$-operator on on the induced Hom-Lie-Yamaguti algebras.
 \end{proof}
Let $(A,[\cdot,\cdot],\Courant{\cdot,\cdot,\cdot}, \alpha)$ be a Hom-Lie-Yamaguti algebra, $(V,\rho,\theta,\beta)$ representation, and $(\huaF,\huaG)$ be a $(2,3)$-cocycle. Define the brackets  $[\cdot,\cdot]_{\rho}$ and $\Courant{\cdot,\cdot,\cdot}_{\theta} $ for all $x,y\in A$ and $u,v\in V$ by  
\begin{align}
    [(x,u), (y,v)]_\rho&=\big([x,y],\rho(x)v-\rho(y)u+\huaF(x,y)\big),
    \\ \Courant{(x,u),(y,v),(z,w)}_\theta&=\big(\Courant{x,y,z},D_{\rho,\theta}(x,y)w+\theta(y,z)u-\theta(x,z)v+\huaG(x,y,z)\big),\label{fun}
    \\\Tilde{\alpha}(x+v)&=(\alpha+\beta)(x,v)=(\alpha(x)+\beta(v)).
\end{align}
\begin{thm}
With the above notation, $A\oplus V$ carries a Hom-Lie-Yamaguti algebra.
This is called the $(\huaF,\huaG)$-twisted semi-direct product, denoted by $A\ltimes_{\huaF,\huaG} V$.
\end{thm}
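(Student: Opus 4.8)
The plan is to verify directly that the six defining identities of a Hom-Lie-Yamaguti algebra (skew-symmetry \eqref{HLHLYskewsym}, the two Jacobi-type relations \eqref{LY1}--\eqref{LY2}, the ``Leibniz'' relation \eqref{LY3}, and the ternary Hom--Nambu identity \eqref{HLYa2homLeib}) hold for $(A\oplus V,[\cdot,\cdot]_\rho,\Courant{\cdot,\cdot,\cdot}_\theta,\tilde\alpha)$, by splitting each identity into its $A$-component and its $V$-component. The $A$-component of every identity is automatically satisfied because it is exactly the corresponding identity in the Hom-Lie-Yamaguti algebra $(A,[\cdot,\cdot],\Courant{\cdot,\cdot,\cdot},\alpha)$; so in each case the real content lies in the $V$-component. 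The $V$-component in turn splits into two kinds of terms: those not involving $\huaF,\huaG$, and those involving $\huaF,\huaG$. The terms not involving $\huaF,\huaG$ are precisely the identities verifying that $(V,\rho,\theta,\beta)$ is a representation of $A$ (Definition \ref{repHlieY}, relations \eqref{RL1}--\eqref{RL10}), which we may invoke since that is our hypothesis; so what remains is to check that the cocycle conditions \eqref{cocycle1}--\eqref{cocycle4} are exactly what is needed to kill the leftover $\huaF,\huaG$-terms.

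Concretely, I would proceed identity by identity. Skew-symmetry \eqref{HLHLYskewsym} in $A\oplus V$ follows immediately from the skew-symmetry of $[\cdot,\cdot]$ and $\Courant{\cdot,\cdot,\cdot}$ in $A$, the skew-symmetry of $\rho(x)v-\rho(y)u$ and $\theta(y,z)u-\theta(x,z)v$ in the first two slots, and the skew-symmetry of $\huaF$ and of $\huaG$ in their first two arguments. For \eqref{LY1}: expanding $\circlearrowleft_{x,y,z}[[(x,u),(y,v)]_\rho,\tilde\alpha(z,w)]_\rho + \circlearrowleft_{x,y,z}\Courant{(x,u),(y,v),(z,w)}_\theta$, the $A$-part vanishes by \eqref{LY1} in $A$; in the $V$-part, the $\rho,\theta,D$-terms vanish by \eqref{RL5} together with the representation relations, and the remaining terms $\circlearrowleft_{x,y,z}\big(\huaF([x,y],\alpha(z))-\rho(\alpha(x))\huaF(y,z)+\huaG(x,y,z)\big)$ vanish by the cocycle condition \eqref{cocycle1}. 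Similarly \eqref{LY2} uses \eqref{cocycle2}, \eqref{LY3} uses \eqref{cocycle3}, and \eqref{HLYa2homLeib} uses \eqref{cocycle4}; the multiplicativity-type compatibility of $\tilde\alpha$ with the brackets is needed along the way and follows from $\alpha$ being multiplicative on $A$, $\beta$-equivariance of $\rho,\theta$ in \eqref{RL1}, and the cochain condition \eqref{eq:cochain01} that $\huaF,\huaG$ are $\alpha$-$\beta$-equivariant.

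The bookkeeping is the only real obstacle: the ternary identity \eqref{HLYa2homLeib} produces, after expansion in $A\oplus V$, a large number of $V$-valued terms, and one must carefully match them against \eqref{RL9}, \eqref{RL10} (for the representation part) and \eqref{cocycle4} (for the cocycle part), keeping track of the various $\alpha^2$, $\beta^2$ twists and of the sign conventions hidden in $D_{\rho,\theta}$ as defined in \eqref{rep}. I expect no conceptual difficulty beyond this; each cocycle equation has been tailored to absorb exactly the corresponding block of $\huaF,\huaG$-terms, which is why the remark immediately preceding states that \eqref{cocycle3}--\eqref{cocycle4} already force $(\huaF,\huaG)$ to be a cocycle in the complex of Section~\ref{Sec2}. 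I would therefore present \eqref{LY1} and \eqref{LY3} in reasonable detail as representative cases and indicate that \eqref{LY2} and \eqref{HLYa2homLeib} follow by the same mechanism, citing \eqref{cocycle2} and \eqref{cocycle4} respectively.
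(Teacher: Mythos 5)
Your proposal is correct and follows essentially the same route as the paper's proof: a direct verification of each Hom-Lie-Yamaguti identity on $A\oplus V$, where the $A$-component is the corresponding identity in $A$, the $\huaF,\huaG$-free part of the $V$-component is absorbed by the representation axioms \eqref{RL1}--\eqref{RL10}, and the leftover terms are killed by the matching cocycle conditions \eqref{cocycle1}--\eqref{cocycle4} (the paper likewise invokes \eqref{LY1}, then \eqref{cocycle4} with \eqref{HLYa2homLeib}, and \eqref{RL8} with \eqref{cocycle3}). Your explicit pairing of each structure identity with its cocycle equation is exactly the organizing principle implicit in the paper's computation, so no further comparison is needed.
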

\begin{proof}
Let $x_1,x_2,x_3,x_4,x_5\in A$ and $u,v,w,t,a\in V$. 
By direct computation and using \eqref{LY1}, we obtain 
\begin{align*}
    &\underset{x_1,x_2,x_3}{\circlearrowleft} [[x_1+u,x_2+v]_\rho,(\alpha+\beta)(x_3+w)]_\rho+\underset{x_1,x_2,x_3}\circlearrowleft \Courant{x_1+u,x_2+v,x_3+w}_\theta =0.
\end{align*}
On the one hand, by \eqref{HLYa2homLeib} and \eqref{fun}, 
\begin{align*}
&\Courant{\Tilde{\alpha}^2(x_1+u),\Tilde{\alpha}^2(x_2+v),\Courant{x_3+w,x_4+t,x_5+a}}\\
&\quad =\Courant{\Tilde{\alpha}^2(x_1+u),\Tilde{\alpha}^2(x_2+v),\Courant{x_3,x_4,x_5}+D_{\rho,\theta}(x_3,x_4)a+\theta(x_4,x_5)w-\theta(x_3,x_5)t+\Psi(x_3,x_4,x_5)}\\
&\quad =\Courant{\alpha^2(x_1),\alpha^2(x_2),\Courant{x_3,x_4,x_5}}+D_{\rho,\theta}(\alpha^2(x_1),\alpha^2(x_2))D_{\rho,\theta}(x_3,x_4)a+D_{\rho,\theta}(\alpha^2(x_1),\alpha^2(x_2))\theta(x_4,x_5)w\\
&\quad\quad-D_{\rho,\theta}(\alpha^2(x_1),\alpha^2(x_2))\theta(x_3,x_5)t+D_{\rho,\theta}(\alpha^2(x_1),\alpha^2(x_2))\huaG(x_3,x_4,x_5)+\theta(\alpha^2(x_2),\Courant{x_3,x_4,x_5})\beta^2(u)\\
&\quad \quad-\theta(\alpha^2(x_1),\Courant{x_3,x_4,x_5})\beta^2(v)+\huaG(\alpha^2(x_1),\alpha^2(x_2),\Courant{x_3,x_4,x_5}).
\end{align*}
On the other hand, 
\begin{align*}
&\Courant{\Courant{x_1+u,x_2+v,x_3+w},\Tilde{\alpha}^2(x_4+t),\Tilde{\alpha}^2(x_5+a)}\\
&\quad =\Courant{\Courant{x_1,x_2,x_3}+D_{\rho,\theta}(x_1,x_2)w+\theta(x_2,x_3)u-\theta(x_1,x_3)v+\huaG(x_1,x_2,x_3),\Tilde{\alpha}^2(x_4+t),\Tilde{\alpha}^2(x_5+a)}\\
&\quad =\Courant{\Courant{x_1,x_2,x_3},\alpha^2(x_4),\alpha^2(x_5)}+D_{\rho,\theta}(\Courant{x_1,x_2,x_3},\alpha^2(x_4))\beta^2(a)+\theta(\alpha^2(x_4),\alpha^2(x_5))D_{\rho,\theta}(x_1,x_2)w\\
&\quad \quad +\theta(\alpha^2(x_4),\alpha^2(x_5))\theta(x_2,x_3)u-\theta(\alpha^2(x_4),\alpha^2(x_5))\theta(x_1,x_3)v+\theta(\alpha^2(x_4),\alpha^2(x_5))\huaG(x_1,x_2,x_3)\\&\quad \quad -\theta(\Courant{x_1,x_2,x_3},\alpha^2(x_5))\beta^2(t)+\huaG(\Courant{x_1,x_2,x_3},\alpha^2(x_4),\alpha^2(x_5)),
\\
&\Courant{\Tilde{\alpha}^2(x_3+w),\Courant{x_1+u,x_2+v,x_4+t},\Tilde{\alpha}^2(x_5+a)}\\
&\quad =\Courant{\Tilde{\alpha}^2(x_3+w),\Courant{x_1,x_2,x_4}+D_{\rho,\theta}(x_1,x_2)t+\theta(x_2,x_4)u-\theta(x_1,x_4)v+\huaG(x_1,x_2,x_4),\Tilde{\alpha}^2(x_5+a)}\\
&\quad =\Courant{\alpha^2(x_3),\Courant{x_1,x_2,x_4},\alpha^2(x_5)}+D_{\rho,\theta}(\alpha^2(x_3),\Courant{x_1,x_2,x_4})\beta^2(a)+\theta(\Courant{x_1,x_2,x_4},\alpha^2(x_5))\beta^2(w)\\
& \quad\quad -\theta(\alpha^2(x_3),\alpha^2(x_5))D_{\rho,\theta}(x_1,x_2)t-\theta(\alpha^2(x_3),\alpha^2(x_5))\theta(x_2,x_4)u+\theta(\alpha^2(x_3),\alpha^2(x_5))\theta(x_1,x_4)v\\
&\quad\quad -\theta(\alpha^2(x_3),\alpha^2(x_5))\huaG(x_1,x_2,x_4)+\huaG(\alpha^2(x_3),\Courant{x_1,x_2,x_4},\alpha^2(x_5)),
\end{align*}
Similarly, we have 
\begin{align*}
&\Courant{\Tilde{\alpha}^2(x_3+w),\Tilde{\alpha}^2(x_4+t),\Courant{x_1+u,x_2+v,x_5+a}}\\
&\quad =\Courant{\Tilde{\alpha}^2(x_3+w),\Tilde{\alpha}^2(x_4+t),\Courant{x_1,x_2,x_5}+D_{\rho,\theta}(x_1,x_2)a+\theta(x_2,x_5)u-\theta(x_1,x_5)v+\Psi(x_1,x_2,x_5)}\\
&\quad =\Courant{\alpha^2(x_3),\alpha^2(x_4),\Courant{x_1,x_2,x_5}}+D_{\rho,\theta}(\alpha^2(x_3),\alpha^2(x_4))D_{\rho,\theta}(x_1,x_2)a+D_{\rho,\theta}(\alpha^2(x_3),\alpha^2(x_4))\theta(x_2,x_5)u\\
&\quad \quad -D_{\rho,\theta}(\alpha^2(x_3),\alpha^2(x_4))\theta(x_1,x_5)v+D_{\rho,\theta}(\alpha^2(x_3),\alpha^2(x_4))\huaG(x_1,x_2,x_5)+\theta(\alpha^2(x_4),\Courant{x_1,x_2,x_5})\beta^2(w)\\
&\quad \quad -\theta(\alpha^2(x_3),\Courant{x_1,x_2,x_5})\beta^2(t)+\huaG(\alpha^2(x_3),\alpha^2(x_4),\Courant{x_1,x_2,x_5}).
\end{align*}
Then we have, according to \eqref{cocycle4} and  \eqref{HLYa2homLeib}
\begin{align*}
&\Courant{\Tilde{\alpha}^2(x_1+u),\Tilde{\alpha}^2(x_2+v),\Courant{x_3+w,x_4+t,x_5+a}}\\ 
&-\Courant{\Courant{x_1+u,x_2+v,x_3+w},\Tilde{\alpha}^2(x_4+t),\Tilde{\alpha}^2(x_5+a)}\\
&-\Courant{\Tilde{\alpha}^2(x_1+u),\Courant{x_2+v,x_3+w,x_4+t},\Tilde{\alpha}^2(x_5+a)}\\ 
&-\Courant{\Tilde{\alpha}^2(x_3+w),\Tilde{\alpha}^2(x_4+t),\Courant{x_1+u,x_2+v,x_5+a}}=0.
\end{align*}
Now let's move to check the identity \eqref{LY3}, let $x_1,x_2,x_3,x_4\in A$ and $u,v,w,t \in V$, we have
\begin{align*}
   & \Courant{\Tilde{\alpha}(x_1+u),\Tilde{\alpha}(x_2+v),[x_3+w,x_4+t]}\\
   &\quad =\Courant{\Tilde{\alpha}(x_1+u),\Tilde{\alpha}(x_2+v),[x_3,x_4]+\rho(x_3)t-\rho(x_4)w+\huaF(x_3,x_4)}\\
   &\quad =\Courant{\alpha(x_1),\alpha(x_2),[x_3,x_4]}
   +D_{\rho,\theta}(\alpha(x_1),\alpha(x_2))\rho(x_3)t
   -D_{\rho,\theta}(\alpha(x_1),\alpha(x_2))\rho(x_4)w\\
   &\quad\quad +D_{\rho,\theta}(\alpha(x_1),\alpha(x_2))\huaF(x_3,x_4)+\theta(\alpha(x_2),[x_3,x_4])\beta(u)\\
   &\quad\quad -\theta(\alpha(x_1),[x_3,x_4])\beta(v)+\huaG(\alpha(x_1),\alpha(x_2),[x_3,x_4]),\\
&
[\Courant{x_1+u,x_2+v,x_3+w},\Tilde{\alpha}^2(x_4+t)]\\
&\quad =[\Courant{x_1,x_2,x_3}+D_{\rho,\theta}(x_1,x_2)w+\theta(x_2,x_3)u-\theta(x_1,x_3)v+\huaG(x_1,x_2,x_3),\Tilde{\alpha}^2(x_4+t)]\\
&\quad =[\Courant{x_1,x_2,x_3},{\alpha}^2(x_4)]+\rho(\Courant{x_1,x_2,x_3})\beta^2(t)-\rho(\alpha^(x_4))D_{\rho,\theta}(x_1,x_2)w\\
&\quad\quad -\rho(\alpha^2(x_4))\theta(x_2,x_3)u+\rho(\alpha^2(x_4))\theta(x_1,x_3)v\\
&\quad\quad -\rho(\alpha^2(x_4))\huaG(x_1,x_2,x_3)+\huaF(\Courant{x_1,x_2,x_3},\alpha^2(x_4))
\\
 & [\Tilde{\alpha}^2(x_3+w),\Courant{x_1+u,x_2+v,x_4+t}]\\
 &\quad =[\Tilde{\alpha}^2(x_3+w),\Courant{x_1,x_2,x_4}+D_{\rho,\theta}(x_1,x_2)t+\theta(x_2,x_4)u-\theta(x_1,x_4)v+\huaG(x_1,x_2,x_4)]\\
 &\quad =[\alpha^2(x_3),\Courant{x_1,x_2,x_4}]+\rho(\alpha^2(x_3))D_{\rho,\theta}(x_1,x_2)t+\rho(\alpha^2(x_3))\theta(x_2,x_4)u-\rho(\alpha^2(x_3))\theta(x_1,x_4)v\\ 
 &\quad\quad +\rho(\alpha^2(x_3))\huaG(x_1,x_2,x_4)+\rho(\Courant{x_1,x_2,x_4})\beta^2(w)
 +\huaF(\alpha^2(x_3),\Courant{x_1,x_2,x_4})
\end{align*}
Then we have 
\begin{multline*}
\Courant{\Tilde{\alpha}(x_1+u),\Tilde{\alpha}(x_2+v),[x_3+w,x_4+t]}-[\Courant{x_1+u,x_2+v,x_3+w},\Tilde{\alpha}^2(x_4+t)] \\
-[\Tilde{\alpha}^2(x_3+w),\Courant{x_1+u,x_2+v,x_4+t}]\overset{\eqref{RL8}- \eqref{cocycle3}}{=}0.
\end{multline*}
Then $(A\oplus V ,[\cdot,\cdot]_\rho,\Courant{\cdot,\cdot,\cdot}_\theta,\alpha)$ is a Hom-Lie-Yamaguti algebra. The proof is completed.
\end{proof}
\begin{prop}
    A linear map $T: V\rightarrow A$  is an
    $(\huaF,\huaG)$-twisted $\mathcal{O}$-operator if and only if the graph
of T,
\begin{align*}
    Gr(T )=\{(T u,u)|\;u\in A\}
\end{align*} is a subalgebra of $(\huaF,\huaG)$-twisted semi-direct product  $A\ltimes_{\huaF,\huaG} V$.
\end{prop}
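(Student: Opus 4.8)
The plan is to rewrite the subalgebra condition for $Gr(T)$ inside $A\ltimes_{\huaF,\huaG}V$ componentwise, and to observe that for each of the three structure maps of the twisted semi-direct product (the twist $\Tilde{\alpha}=\alpha+\beta$, the binary bracket $[\cdot,\cdot]_\rho$, and the ternary bracket $\Courant{\cdot,\cdot,\cdot}_\theta$) the requirement that the output land again in $Gr(T)$ is literally one of the defining identities \eqref{R1}, \eqref{R2}, \eqref{R3} of a $(\huaF,\huaG)$-twisted $\mathcal O$-operator. So the proof is a direct translation of definitions, run in both directions at once.

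Concretely, I would first note that $Gr(T)=\{(Tu,u)\mid u\in V\}$ is the image of the injective linear map $\iota\colon V\to A\oplus V$, $u\mapsto(Tu,u)$, hence a linear subspace, and that an element $(a,b)\in A\oplus V$ lies in $Gr(T)$ if and only if $a=Tb$. Since $\Tilde{\alpha}(Tu,u)=(\alpha(Tu),\beta(u))$, closure of $Gr(T)$ under $\Tilde{\alpha}$ amounts exactly to $\alpha(Tu)=T(\beta(u))$ for all $u\in V$, i.e.\ to \eqref{R1}. Next, by the definition of $[\cdot,\cdot]_\rho$,
\[
[(Tu,u),(Tv,v)]_\rho=\big([Tu,Tv],\ \rho(Tu)v-\rho(Tv)u+\huaF(Tu,Tv)\big),
\]
so $Gr(T)$ is closed under $[\cdot,\cdot]_\rho$ if and only if $[Tu,Tv]=T\big(\rho(Tu)v-\rho(Tv)u+\huaF(Tu,Tv)\big)$ for all $u,v$, that is \eqref{R2}. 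Likewise, from \eqref{fun},
\[
\Courant{(Tu,u),(Tv,v),(Tw,w)}_\theta=\big(\Courant{Tu,Tv,Tw},\ D_{\rho,\theta}(Tu,Tv)w+\theta(Tv,Tw)u-\theta(Tu,Tw)v+\huaG(Tu,Tv,Tw)\big),
\]
and this lies in $Gr(T)$ for all $u,v,w$ precisely when \eqref{R3} holds.

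Putting the three equivalences together, $Gr(T)$ is a sub-Hom-Lie-Yamaguti algebra of $A\ltimes_{\huaF,\huaG}V$ (a linear subspace closed under $\Tilde{\alpha}$, under $[\cdot,\cdot]_\rho$, and under $\Courant{\cdot,\cdot,\cdot}_\theta$) if and only if \eqref{R1}--\eqref{R3} all hold, which is precisely the definition of $T$ being a $(\huaF,\huaG)$-twisted $\mathcal O$-operator. There is no genuine obstacle in this argument; the one point worth stating explicitly is that ``subalgebra'' here must be understood to include closure under the structure map $\Tilde{\alpha}$, and it is exactly this requirement that produces the multiplicativity-type condition \eqref{R1} — which the two bracket conditions on their own would not capture.
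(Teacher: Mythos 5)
Your argument is correct: the paper actually states this proposition without proof, and your componentwise unwinding — closure under $\Tilde{\alpha}$ giving \eqref{R1}, closure under $[\cdot,\cdot]_\rho$ giving \eqref{R2}, and closure under $\Courant{\cdot,\cdot,\cdot}_\theta$ giving \eqref{R3} — is exactly the standard argument the authors are implicitly relying on. Your remark that ``subalgebra'' must include closure under the structure map $\Tilde{\alpha}$ (and your silent correction of the paper's typo $u\in A$ to $u\in V$ in the definition of $Gr(T)$) are both apt.
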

\begin{prop}\label{PP}
    Let $T: V\rightarrow A$ a
    $(\huaF,\huaG)$-twisted $\mathcal{O}$-operator. Then, $V$ carries a Hom-Lie-Yamaguti algebra structure with brackets defined for all $u, v, w \in V$ by 
\begin{align}
    \label{V-HLY1}
[u,v]_T&=\rho(Tu)v-\rho(Tv)u+\huaF(Tu,Tv),\\ 
\label{V-HLY2} \Courant{u,v,w}_T&=D_{\rho,\theta}(Tu,Tv)w+\theta(Tv,Tw)u-\theta(Tu,Tw)v+\huaG(Tu,Tv,Tw).
\end{align}
Moreover, $T$ is a morphism from $(V,[\cdot,\cdot]_T,\Courant{\cdot,\cdot,\cdot}_T,\beta)$ to $(A,[\cdot,\cdot],\Courant{\cdot,\cdot,\cdot},\alpha)$.
\end{prop}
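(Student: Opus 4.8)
The plan is to derive this from the twisted semi-direct product $A\ltimes_{\huaF,\huaG}V$ of the preceding theorem together with the graph characterisation proved just above, so that the identities \eqref{HLHLYskewsym}--\eqref{HLYa2homLeib} for $V$ need not be checked one by one.

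First I would spell out what the operations of $A\ltimes_{\huaF,\huaG}V$ look like on elements of the graph $Gr(T)=\{(Tu,u)\mid u\in V\}$. For $u,v,w\in V$ one gets $[(Tu,u),(Tv,v)]_\rho=\big([Tu,Tv],\,\rho(Tu)v-\rho(Tv)u+\huaF(Tu,Tv)\big)$ and, by \eqref{fun}, $\Courant{(Tu,u),(Tv,v),(Tw,w)}_\theta=\big(\Courant{Tu,Tv,Tw},\,D_{\rho,\theta}(Tu,Tv)w+\theta(Tv,Tw)u-\theta(Tu,Tw)v+\huaG(Tu,Tv,Tw)\big)$, while the twisting map of the semi-direct product sends $(Tu,u)$ to $(\alpha(Tu),\beta(u))$. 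Now using \eqref{R2}, \eqref{R3} and \eqref{R1} respectively, the first components of these three elements equal $T([u,v]_T)$, $T(\Courant{u,v,w}_T)$ and $T(\beta(u))$, so all three lie in $Gr(T)$; this is exactly the content of the preceding proposition, and since $A\ltimes_{\huaF,\huaG}V$ is a Hom-Lie-Yamaguti algebra, $Gr(T)$ is a Hom-Lie-Yamaguti subalgebra of it.

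Next I would transport this structure along the linear isomorphism $\iota\colon V\to Gr(T)$, $u\mapsto(Tu,u)$. The computation above says precisely that $\iota([u,v]_T)=[(Tu,u),(Tv,v)]_\rho$, $\iota(\Courant{u,v,w}_T)=\Courant{(Tu,u),(Tv,v),(Tw,w)}_\theta$ and $\iota(\beta(u))=(\alpha\oplus\beta)(\iota(u))$; since the right-hand sides define a Hom-Lie-Yamaguti algebra on $Gr(T)$, pulling back along $\iota^{-1}$ shows that $(V,[\cdot,\cdot]_T,\Courant{\cdot,\cdot,\cdot}_T,\beta)$ is a Hom-Lie-Yamaguti algebra, each of \eqref{HLHLYskewsym}--\eqref{HLYa2homLeib} for $V$ being the $\iota^{-1}$-image of the corresponding identity on $Gr(T)$. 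Finally, the first-factor projection $\pr_A\colon A\ltimes_{\huaF,\huaG}V\to A$ is a morphism of Hom-Lie-Yamaguti algebras (it sends each operation of $A\ltimes_{\huaF,\huaG}V$ to the original operation of $A$, and it intertwines $\alpha\oplus\beta$ with $\alpha$), and $T=\pr_A\circ\iota$; as a composite of an isomorphism with a morphism, $T$ is a morphism from $(V,[\cdot,\cdot]_T,\Courant{\cdot,\cdot,\cdot}_T,\beta)$ to $(A,[\cdot,\cdot],\Courant{\cdot,\cdot,\cdot},\alpha)$, i.e.\ $T\circ\beta=\alpha\circ T$, $T([u,v]_T)=[Tu,Tv]$ and $T(\Courant{u,v,w}_T)=\Courant{Tu,Tv,Tw}$ --- which are again just \eqref{R1}--\eqref{R3}.

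I do not expect any real obstacle: the substance is already in the twisted semi-direct product theorem, and the proof amounts to keeping track of which of \eqref{R1}--\eqref{R3} is used in which slot so that $\iota$ simultaneously respects the binary bracket, the ternary bracket and the twisting map; no multiplicativity of $\alpha$ or $\beta$ is needed. If instead a self-contained argument were preferred, one would check \eqref{HLHLYskewsym}--\eqref{HLYa2homLeib} for $[\cdot,\cdot]_T$ and $\Courant{\cdot,\cdot,\cdot}_T$ directly, replacing every occurrence of $[Tu,Tv]$ and $\Courant{Tu,Tv,Tw}$ via \eqref{R2}--\eqref{R3} and then collapsing the result with the representation axioms of Definition~\ref{repHlieY} and the $(2,3)$-cocycle conditions \eqref{cocycle1}--\eqref{cocycle4}; this is routine but lengthy, and essentially re-proves the semi-direct product theorem.
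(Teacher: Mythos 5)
Your proof is correct, but it takes a genuinely different route from the paper. The paper proves Proposition \ref{PP} by direct computation: it expands $\Courant{[u,v]_T,\beta(w),\beta(a)}_T$ and its cyclic companions, and the terms of identity \eqref{LY3}, using \eqref{R1}--\eqref{R3} to rewrite $T\beta$, $[Tu,Tv]$ and $\Courant{Tu,Tv,Tw}$, and then invokes the representation axioms and the cocycle conditions to collapse the sums (with the remaining identities left as ``similar''). You instead observe that the closure computation for $Gr(T)$ --- which is exactly the forward direction of the graph characterisation stated just before Proposition \ref{PP} --- identifies $(V,[\cdot,\cdot]_T,\Courant{\cdot,\cdot,\cdot}_T,\beta)$ with the subalgebra $Gr(T)$ of the twisted semi-direct product via $\iota(u)=(Tu,u)$, and that $T=\pr_A\circ\iota$ is then automatically a morphism. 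This is the standard structural argument for relative Rota--Baxter/$\mathcal{O}$-operators and is both shorter and less error-prone, since all the cocycle and representation identities are consumed once, in the semi-direct product theorem, rather than a second time here; the only mild caveat is that the graph proposition is itself stated without proof in the paper, so your argument silently supplies its forward implication (the closure of $Gr(T)$ under the three structure maps), which is harmless but worth making explicit. The paper's direct verification, by contrast, is independent of the graph proposition but essentially re-derives computations already present in the proof of the semi-direct product theorem.
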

\begin{proof}Let $u,v,w,a \in A$. On the one hand,
\begin{align*}
&\Courant{[u,v]_T,\beta(w),\beta(a)}_T=\Courant{\rho(Tu)v-\rho(Tv)u+\huaF(Tu,Tv),\beta(w),\beta(a)}_T\\
&\quad=D_{\rho,\theta}(T(\rho(Tu)v-\rho(Tv)u+\huaF(Tu,Tv)),T\beta(w))\beta(a)+\theta(T\beta(w),T\beta(a))\rho(Tu)v\\
&\quad\quad-\theta(T\beta(w),T\beta(a))\rho(Tv)u+\theta(T\beta(w),T\beta(a))\huaF(Tu,Tv)\\
&\quad\quad-\theta(T(\rho(Tu)v-\rho(Tv)u+\huaF(Tu,Tv)),T\beta(a))\beta(w)\\
&\quad\quad+\huaG(T(\rho(Tu)v-\rho(Tv)u+\huaF(Tu,Tv)),T\beta(w),T\beta(a))\\
&\quad=D_{\rho,\theta}([Tu,Tv],T\beta(w))\beta(a)+\theta(T\beta(w),T\beta(a))\rho(Tu)v-\theta(T\beta(w),T\beta(a))\rho(Tv)u\\
&\quad\quad+\theta(T\beta(w),T\beta(a))\huaF(Tu,Tv)-\theta([Tu,Tv],T\beta(a))\beta(w)+\huaG([Tu,Tv],T\beta(w),T\beta(a))\\
&\quad=D_{\rho,\theta}([Tu,Tv],\alpha T(w))\beta(a)+\theta(\alpha T(w),\alpha T(a))\rho(Tu)v-\theta(\alpha T(w),\alpha T(a))\rho(Tv)u\\
&\quad\quad+\theta(\alpha T(w),\alpha T(a))\huaF(Tu,Tv)-\theta([Tu,Tv],\alpha T(a))\beta(w)+\huaG([Tu,Tv],\alpha T(w),\alpha T(a)).
\end{align*}
 On the other hand, 
 \begin{align*}
&\Courant{[v,w]_T,\beta(u),\beta(a)}_T=D_{\rho,\theta}([Tv,Tw],T\beta(u))\beta(a)+\theta(T\beta(u),T\beta(a))\rho(Tv)w-\theta(T\beta(u),T\beta(a))\rho(Tw)v\\
&\quad\quad +\theta(T\beta(u),T\beta(a))\huaF(Tv,Tw)-\theta([Tv,Tw],T\beta(a))\beta(u)+\huaG([Tv,Tw],T\beta(u),T\beta(a)\\
&\quad=D_{\rho,\theta}([Tv,Tw],\alpha T(u))\beta(a)+\theta(\alpha T(u),\alpha T(a))\rho(Tv)w-\theta(\alpha T(u),\alpha T(a))\rho(Tw)v\\
&\quad\quad+\theta(\alpha T(u),\alpha T(a))\huaF(Tv,Tw)-\theta([Tv,Tw],\alpha T(a))\beta(u)+\huaG([Tv,Tw],\alpha T(u),\alpha T(a)),
 \\
    & \Courant{[w,u]_T,\beta(v),\beta(a)}_T=D_{\rho,\theta}([Tw,Tu],T\beta(v))\beta(a)+\theta(T\beta(v),T\beta(a))\rho(Tw)u-\theta(T\beta(v),T\beta(a))\rho(Tu)w\\
    &\quad\quad+\theta(T\beta(v),T\beta(a))\huaF(Tw,Tu)-\theta([Tw,Tu],T\beta(a))\beta(v)+\huaG([Tw,Tu],T\beta(v),T\beta(a))\\
    &\quad =D_{\rho,\theta}([Tw,Tu],\alpha T(v))\beta(a)+\theta(\alpha T(v),\alpha T(a))\rho(Tw)u-\theta(\alpha T(v),\alpha T(a))\rho(Tu)w\\
    &\quad\quad+\theta(\alpha T(v),\alpha T(a))\huaF(Tw,Tu)-\theta([Tw,Tu],\alpha T(a))\beta(v)+\huaG([Tw,Tu],\alpha T(v),\alpha T(a)).
 \end{align*}
 Then, 
 $\Courant{[u,v]_T,\beta(w),\beta(a)}_T+\Courant{[v,w]_T,\beta(u),\beta(a)}_T+\Courant{[w,u]_T,\beta(v),\beta(a)}_T=0.$
 
 Now, let us check the identity \eqref{LY3}. For $u,v,w,a \in V$, 
 \begin{align*}
  &   \Courant{\beta(u),\beta(v),[w,a]_T}_T=
  \Courant{\beta(u),\beta(v),\rho(Tw)a-\rho(Ta)w+\huaF(Tw,Ta)}_T\\
  &\quad=D_{\rho,\theta}(T\beta(u),T\beta(v))\rho(Tw)a-D_{\rho,\theta}(T\beta(u),T\beta(v))\rho(Ta)w\\
  &\quad\quad+D_{\rho,\theta}(T\beta(u),T\beta(v))\huaF(Tw,Ta)+\theta(T\beta(v),[Tw,Ta])\beta(u)\\
  &\quad\quad-\theta(T\beta(u),[Tw,Ta]\beta(v)+\huaG(T\beta(u),T\beta(v),[Tw,Ta])\\
  &\quad=D_{\rho,\theta}(\alpha T(u),\alpha T(v))\rho(Tw)a-D_{\rho,\theta}(\alpha 
T(u),\alpha T(v))\rho(Ta)w\\
&\quad\quad+D_{\rho,\theta}(\alpha T(u),\alpha T(v))\huaF(Tw,Ta)+\theta(\alpha T(v),[Tw,Ta])\beta(u)\\
&\quad\quad-\theta(\alpha T(u),[Tw,Ta]\beta(v)+\huaG(\alpha T(u),\alpha T(v),[Tw,Ta])
 \\
    & [\Courant{u,v,w}_T,\beta^2(a)]_T=[D_{\rho,\theta}(Tu,Tv)w+\theta(Tv,Tw)u-\theta(Tu,Tw)v+\huaG(Tu,Tv,Tw),\beta^2(a)]_T\\
    &\quad=\rho(\Courant{Tu,Tv,Tw})\beta^2(a)-\rho(T\beta^2(a))D_{\rho,\theta}(Tu,Tv)w-\rho(T\beta^2(a))\theta(Tv,Tw)u\\
    &\quad\quad+\rho(T\beta^2(a))\theta(Tu,Tw)v-\rho(T\beta^2(a))\huaG(Tu,Tv,Tw)+\huaF(\Courant{Tu,Tv,Tw},T\beta^2(a))\\
    &\quad =\rho(\Courant{Tu,Tv,Tw})\beta^2(a)-\rho(\alpha^2 T(a))D_{\rho,\theta}(Tu,Tv)w-\rho(\alpha^2 T(a))\theta(Tv,Tw)u\\
    &\quad \quad +\rho(\alpha^2 T(a))\mu(Tu,Tw)v-\rho(\alpha^2 T(a))\huaG(Tu,Tv,Tw)+\huaF(\Courant{Tu,Tv,Tw},\alpha^2 T(a)).
 \end{align*}
 Similarly, we have 
 \begin{align*}
   &  [\beta^2(w),\Courant{u,v,a}_T]_T=[\beta^2(w),D_{\rho,\theta}(Tu,Tv)a+\theta(Tv,Ta)u-\theta(Tu,Ta)v+\huaG(Tu,Tv,Ta)]_T\\
   &\quad=\rho(T\beta^2(w))D_{\rho,\theta}(Tu,Tv)a+\rho(T\beta^2(w))\theta(Tv,Ta)u-\rho(T\beta^2(w))\theta(Tu,Ta)v\\
   &\quad\quad+\rho(T\beta^2(w))\huaG(Tu,Tv,Ta)-\rho(\Courant{Tu,Tv,Ta})\beta^2(w)+\huaF(T\beta^2 (w),\Courant{Tu,Tv,Ta})\\
   &\quad=\rho(\alpha^2 T(w))D_{\rho,\theta}(Tu,Tv)a+\rho(\alpha^2 T(w))\theta(Tv,Ta)u-\rho(\alpha^2 T(w))\theta(Tu,Ta)v\\
   &\quad\quad+\rho(\alpha^2 T(w))\huaG(Tu,Tv,Ta)-\rho(\Courant{Tu,Tv,Ta})\beta^2(w)+\huaF(\alpha^2 T(w),\Courant{Tu,Tv,Ta}).
 \end{align*}

Similarly, we can prove the other identities.
Then we obtain that $(V,[\cdot,\cdot]_T,\Courant{\cdot,\cdot,\cdot}_T,\beta)$ is a Hom-Lie-Yamaguti algebra.  Moreover,
it is easy to show that $T$ is a morphism from $(V,[\cdot,\cdot]_T,\Courant{\cdot,\cdot,\cdot}_T,\beta)$ to $(A,[\cdot,\cdot],\Courant{\cdot,\cdot,\cdot},\alpha)$.
The proof is complete.
\end{proof}

    

\section{Cohomology of twisted $\mathcal O$-operator on a Hom-Lie-Yamaguti algebra}\label{Sec4}
In this section, we give a cohomology of twisted $\mathcal O$-operator on Hom-Lie-Yamaguti algebras. Let $T: V\rightarrow A$ a
    $(\huaF,\huaG)$-twisted $\mathcal O$-operator. We have seen in Proposition \ref{PP} that the linear
space $V$ carries a Hom-Lie-Yamaguti algebra structure with brackets
$(
[u,v]_T,\Courant{u,v,w}_T)$. We show that the cohomology of $T$ can be interpreted as the 
cohomology of $(V,[\cdot,\cdot]_T,\Courant{\cdot,\cdot,\cdot}_T,\beta)$ with coefficients in a suitable module structure on $A$. Define
the linear maps $\rho_T: V \to End(A)$ and $\theta_T : \otimes^2 V \to End(A)$ by  
\begin{align}
    \label{inducedRep}
&\rho_T(u)x= [Tu,x]+T(\rho(x)u+\huaF(x,Tu)),\\
   &\theta_T(u,v)x = [x,Tu,Tv]- T \Big( D(x,Tu)v- \theta(x,Tv)u +\huaG(x,Tu,Tv) \Big) \quad \forall u,v\in M, ~   ~ x\in L.
\end{align}
\begin{prop} With the above  notations, $(A,\rho_T,\theta_T,\alpha)$ is a representation  of the  Hom-Lie-Yamaguti algebra
$(V,[\cdot,\cdot]_T,[\cdot, \cdot, \cdot]_T,\beta).$
\end{prop}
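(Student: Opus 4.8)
The plan is to verify the ten defining relations \eqref{RL1}--\eqref{RL10} of Definition \ref{repHlieY} for the pair $(\rho_T,\theta_T)$, with $(V,[\cdot,\cdot]_T,\Courant{\cdot,\cdot,\cdot}_T,\beta)$ as the Hom-Lie-Yamaguti algebra and $\alpha$ as the module map on $A$. The conceptual guide, which also explains the formulas \eqref{inducedRep}, is that $A$ ought to be identified with the quotient $\frkE/Gr(T)$ of the $(\huaF,\huaG)$-twisted semidirect product $\frkE:=A\ltimes_{\huaF,\huaG}V$ by the subalgebra $Gr(T)=\{(Tu,u)\mid u\in V\}$: the latter is a sub-Hom-Lie-Yamaguti algebra of $\frkE$, it is isomorphic via $u\mapsto(Tu,u)$ to $(V,[\cdot,\cdot]_T,\Courant{\cdot,\cdot,\cdot}_T,\beta)$ by Proposition \ref{PP}, it is stable under $\alpha+\beta$ by \eqref{R1}, and the linear projection $\pi\colon\frkE\to A$, $\pi(a,u)=a-Tu$, has kernel exactly $Gr(T)$; transporting the restricted adjoint action of $Gr(T)$ on $\frkE$ through $\pi$ yields, after using the skew-symmetry of $\huaF$ and the identity $D_{\rho,\theta}(x,y)z=\Courant{x,y,z}$ valid in the adjoint representation, precisely the maps $\rho_T$ and $\theta_T$ of \eqref{inducedRep}.

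Concretely I would argue as follows. The relations in \eqref{RL1} follow from $T\circ\beta=\alpha\circ T$, the multiplicativity of $\alpha$, the corresponding intertwining relations for $(\rho,\theta)$, and the cochain conditions $\huaF(\alpha(x),\alpha(y))=\beta(\huaF(x,y))$ and $\huaG(\alpha(x),\alpha(y),\alpha(z))=\beta(\huaG(x,y,z))$. For the remaining relations one first computes $D_{\rho_T,\theta_T}$ from its defining formula together with $\rho_T,\theta_T$ as in \eqref{inducedRep}, and substitutes everything into the identity to be proved. The decisive step is that every occurrence of $[Tu,Tv]$ or $\Courant{Tu,Tv,Tw}$ produced by this expansion---whether as an argument of $\rho$, $\theta$, $D_{\rho,\theta}$, $T$, or of the brackets of $A$---is rewritten by \eqref{R2}--\eqref{R3}. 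After this, each side splits into a sum of terms lying in $A$ and a sum of terms of the form $T(\,\cdots)$, and the two sums vanish separately: the $A$-valued part by the Hom-Lie-Yamaguti axioms \eqref{LY1}--\eqref{HLYa2homLeib} of $A$ and the module relations \eqref{RL5}--\eqref{RL10} of $(\rho,\theta)$, and the $T$-valued part by the cocycle identities \eqref{cocycle1}--\eqref{cocycle4}. This last point is exactly where the assumption that $(\huaF,\huaG)$ is a $(2,3)$-cocycle enters.

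The main difficulty is purely organisational: the higher relations \eqref{RL7}, \eqref{RL9}, \eqref{RL10} each unfold into on the order of a dozen terms, and one must keep scrupulous track of the powers of $\alpha$ and $\beta$ so that the correct instances of \eqref{R1}--\eqref{R3}, of the cocycle relations, and of Definition \ref{repHlieY} are applied. The quotient description above is the most economical way to avoid getting lost: granting the standard facts that restricting a representation along a subalgebra inclusion again gives a representation and that a submodule yields a quotient module, the statement follows from the already-established fact that $A\ltimes_{\huaF,\huaG}V$ is a Hom-Lie-Yamaguti algebra, leaving only the identification of the transported action with $(\rho_T,\theta_T)$.
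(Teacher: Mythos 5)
Your proposal is correct, and it actually contains two routes. The concrete plan in your second and third paragraphs --- expand $D_{\rho_T,\theta_T}$ and each of the identities \eqref{RL1}--\eqref{RL10}, rewrite every $[Tu,Tv]$ and $\Courant{Tu,Tv,Tw}$ via \eqref{R2}--\eqref{R3}, and let the resulting $A$-valued part cancel by the Hom-Lie-Yamaguti axioms and the module relations of $(\rho,\theta)$ while the $T(\cdots)$ part cancels by the cocycle identities \eqref{cocycle1}--\eqref{cocycle4} --- is exactly what the paper does; its proof is precisely this brute-force verification, organized identity by identity. The route you actually recommend, however, is different from the paper's: realizing $A$ as the quotient of the twisted semidirect product $A\ltimes_{\huaF,\huaG}V$ by the graph $Gr(T)$ via $\pi(a,u)=a-Tu$, and transporting the restricted adjoint action of $Gr(T)\cong(V,[\cdot,\cdot]_T,\Courant{\cdot,\cdot,\cdot}_T,\beta)$ through $\pi$. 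Your identification of the transported action with $(\rho_T,\theta_T)$ is right (the sign from skew-symmetry of $\huaF$ works out, and $\pi\circ\tilde\alpha=\alpha\circ\pi$ follows from $T\circ\beta=\alpha\circ T$), and this route buys a genuine economy: it reduces the proposition to the already-proved fact that $A\ltimes_{\huaF,\huaG}V$ is a Hom-Lie-Yamaguti algebra. The cost is that it leans on three ingredients the paper never establishes in the Hom-Lie-Yamaguti setting: that $Gr(T)$ is a subalgebra (stated as a proposition but left unproved), that restricting the adjoint representation along a subalgebra inclusion again yields a representation, and that a sub-bimodule gives a quotient module. All three are routine --- the representation axioms are linear in the module slot and the algebra-slot identities only involve subalgebra elements --- but for a self-contained write-up you would need to record them explicitly, whereas the paper's longer computation needs no such auxiliary lemmas.
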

\begin{proof}
Firstly, for all $ u,v\in M$ and $x\in L$, 
$$D_T(u,v)x=  \theta_T(v,u)x- \theta_T(u,v)x 
=[Tu,Tv,x]- T(\theta(Tv,x))u -\theta(Tu,x))v+ \huaG(Tu,Tv,x)) $$
By \eqref{cocycle1} and \eqref{R1}-\eqref{R3},
for all $u, v, w, \in V, x \in A$,  
\begin{align*}
&D_T(u,v)x-\theta_T(v,u)x+\theta_T(u,v)x+\rho_T([u,v])\circ\alpha( x)-\rho_T( \beta(u))\rho (v)x+\rho_T (\beta(v))\rho_T (u)x\\
& =[Tu,Tv,x]- T(\theta(Tv,x))u -\theta(Tu,x))v+ \huaG(Tu,Tv,x)) - [x,Tv,Tu]\\ 
&\quad + T \Big( D(x,Tv)u- \theta(x,Tu)v +\huaG(x,Tv,Tu) \Big)+[x,Tu,Tv]\\
&\quad- T \Big( D(x,Tu)v- \theta(x,Tv)u +\huaG(x,Tu,Tv)\Big)+[T[u,v],\alpha(x)]\\ 
&\quad+T(\rho(\alpha(x)[u,v]+\huaF(\alpha(x),T[u,v])\\
&\quad-[T(\beta(u),[Tv,x]]-T(\rho([Tv,x])\beta(u)-\huaF([Tv,x],Tu))-[T\beta(u),T\rho(x)v]-T(\rho(T\rho(x)v))\beta(u)\\
&\quad-\huaF(T\rho(x)v,T\beta(u))-[T\beta(u),T\huaF(x,Tv)]-T(\rho(T\huaF(x,Tv))\beta(u)-\huaF(\huaF(x,Tv),T\beta(u))\\
&\quad+[T\beta(v),[Tu,x]]+T(\rho([Tu,x])\beta(v)+\huaF([Tu,x],Tv)+[T\beta(v),T\rho(x)u]+T(\rho(T\rho(x)u))\beta(v)\\
&\quad+\huaF(T\rho(x)u,T\beta(v))+[T\beta(v),T\huaF(x,Tu)]+T(\rho(T\huaF(x,Tu))\beta(v)+\huaF(\huaF(x,Tu),T\beta(v))=0,
\end{align*}
On the other hand,  according to \eqref{R1}- \eqref{R2}- \eqref{R3}- \eqref{cocycle2}-\eqref{cocycle3}, 
\begin{align*}
    &D_T([u,v],\beta(w))x+D_T([v,w],\beta(u))x+D_T([w,u],\beta(v))x\\
    &=[T[u,v],T\beta(w),x]-T(\theta(T\beta(w),x)[u,v]-\theta(T[u,v],x)\beta(w)+\huaG(T[u,v],T\beta(w),x))\\
    &+[T[v,w],T\beta(u),x]-T(\theta(T\beta(u),x)[v,w]-\theta(T[v,w],x)\beta(u)+\huaG(T[v,w],T\beta(u),x))\\
    &+[T[w,u],T\beta(v),x]-T(\theta(T\beta(v),x)[w,u]-\theta(T[w,u],x)\beta(v)+\huaG(T[w,u],T\beta(v),x))=0.
\end{align*}
Similarly, 
\begin{align*}
    &\theta_T([u,v],\beta(w))\circ \alpha(x)-\theta_T(\beta(u),\beta(w))\rho_T(v)x+\theta_T(\beta(v),\beta(w))\rho_T(u)x\\
    &=[\alpha(x),T([u,v]),\beta(w)]- T \Big( D(\alpha(x),T([u,v])\beta(w)- \theta(\alpha(x),T\beta(w))[u,v] +\huaG(\alpha(x),T([u,v]),T\beta(w)) \Big)\\
    &-[\rho_T(v)x,T\beta(u),T\beta(w)]+T \Big( D(\rho_T(v)x,\beta(u))\beta(w)-\theta(\rho_T(v)x,T\beta(w))\beta(u)+\huaG(\rho_T(v)x,T\beta(u),T\beta(w)) \Big)\\
    &+[\rho_T(u)x,T\beta(v),T\beta(w)]-T \Big( D(\rho_T(u)x,\beta(v))\beta(w)-\theta(\rho_T(u)x,T\beta(w))\beta(v)+\huaG(\rho_T(u)x,T\beta(v),T\beta(w)) \Big)\\
    &=[\alpha(x),T([u,v]),\beta(w)]- T \Big( D(\alpha(x),T([u,v])\beta(w)- \theta(\alpha(x),T\beta(w))[u,v] +\huaG(\alpha(x),T([u,v]),T\beta(w)) \Big)\\
    &-[[Tv,x],T\beta(u),T\beta(w)]-[T(\rho(x)v),T\beta(u),T\beta(w)]-[T\huaF(x,Tv)),T\beta(u),T\beta(w)]\\
    &+T \Big( D([Tv,x],\beta(u))\beta(w)+T \Big( D(T(\rho(x)v),\beta(u))\beta(w)+T \Big( D(T\huaF(x,Tv)),\beta(u))\beta(w)\\
    &-\theta([Tv,x],T\beta(w))\beta(u)-\theta(T(\rho(x)v),T\beta(w))\beta(u)-\theta(T\huaF(x,Tv)),T\beta(w))\beta(u)\\
    &+\huaG([Tv,x],T\beta(u),T\beta(w)) \Big)
    +\huaG(T(\rho(x)v),T\beta(u),T\beta(w)) \Big)+\huaG(T\huaF(x,Tv)),T\beta(u),T\beta(w)) \Big)\\
    &+[[Tu,x],T\beta(v),T\beta(w)]+[T(\rho(x)u),T\beta(v),T\beta(w)]+[T\huaF(x,Tu)),T\beta(v),T\beta(w)]\\
    &-T ( D([Tu,x],\beta(v))\beta(w)-T ( D(T(\rho(x)u),\beta(v))\beta(w)-T ( D(T(\huaF(x,Tu)),\beta(v))\beta(w)\\
    &-\theta([Tu,x],T\beta(w))\beta(v)-\theta(T(\rho(x)u),T\beta(w))\beta(v)-\theta(T(\huaF(x,Tu)),T\beta(w))\beta(v)\\
    &+\huaG([Tu,x],T\beta(v),T\beta(w)) +\huaG(T(\rho(x)u),T\beta(v),T\beta(w))+\huaG(T\huaF(x,Tu)),T\beta(v),T\beta(w))=0.
\end{align*}
Moreover,
\begin{align*}
&D_T(\beta(u),\beta(v))\rho_T(w)(x)-\rho_T(\beta^2(w))D_T(u,v)(x)-\rho_T([u, v, w])\circ \alpha(x)\\ 
& =[T\beta(u),T\beta(v),\rho_T(w)(x)]- T\Big(\theta(T\beta(v),\rho_T(w)(x))\beta(u) -\theta(T\beta(u),\rho_T(w)(x))\beta(v)\\ 
&\hspace{9cm} + \huaG(T\beta(u),T\beta(v),\rho_T(w)(x))\Big)\\
&\quad-[T\beta^2(w),D_T(u,v)(x)]-T(\rho(D_T(u,v)(x))\beta^2(w)+\huaF(D_T(u,v)(x),T\beta^2(w)))\\
&\quad-[T[u, v, w],\alpha(x)]-T(\rho(\alpha(x))[u, v, w]+\huaF(\alpha(x),T[u, v, w])))\\
=&[T\beta(u),T\beta(v),[Tw,x]]+[T\beta(u),T\beta(v),T(\rho(x)w)]+[T\beta(u),T\beta(v),T\huaF(x,Tw))]\\
&- T\Big(\theta(T\beta(v),[Tu,x]))\beta(u)+ \theta(T\beta(v),T(\rho(x)u))\beta(u)+ \theta(T\beta(v),T\huaF(x,Tu))\beta(u)\Big)\\&-\theta(T\beta(u),[Tw,x])\beta(v)-\theta(T\beta(u),T(\rho(x)w))\beta(v)-\theta(T\beta(u),T\huaF(x,Tw))\beta(v)\\&+ \huaG(T\beta(u),T\beta(v),[Tw,x])+ \huaG(T\beta(u),T\beta(v),T(\rho(x)w))+ \huaG(T\beta(u),T\beta(v),T\huaF(x,Tw))\\&-[T\beta^2(w),[Tu,Tv,x]]+[T\beta^2(w),T(\theta(Tv,x))u )] \\
&+[T\beta^2(w),T(\theta(Tu,x))v)]-[T\beta^2(w),T \huaG(Tu,Tv,x))]\\
&-T\Big(\rho([Tu,Tv,x])\beta^2(w)-\rho(T(\theta(Tv,x))u )\beta^2(w)-\rho( T(\theta(Tu,x))v)\beta^2(w)+\rho(T \huaG(Tu,Tv,x))\beta^2(w)\Big)\\
&+\huaF([Tu,Tv,x],T\beta^2(w))-\huaF( T(\theta(Tv,x))u),T\beta^2(w))\\
&-\huaF(T(\theta(Tu,x))v),T\beta^2(w))+\huaF( T \huaG(Tu,Tv,x),T\beta^2(w)))\\
&-[T[u, v, w],\alpha(x)]-T(\rho(\alpha(x))[u, v, w]+\huaF(\alpha(x),T[u, v, w]))=0.
\end{align*}
Similarly, 
\begin{align*}
&\theta_T(\beta(u),[v, w])\circ \alpha(x)-\rho_T(\beta^2(v))\theta_T(u, w)x+\rho_T(\beta^2(w))\theta_T(u,v)x=0,\\
&D_T(\beta^2(u),\beta^2(v)\theta_T(w,a)=\theta_T(\beta^2(w),\beta^2(a))D_T(u,v)
+\theta_T([u,v,w],\beta^2(a))\circ\alpha^2(x)\\
&\hspace{8cm} +\theta_T(\beta^2(w),[u,v,a])\circ\alpha^2(x),\\
&\theta_T(\beta^2(u),[v, w, a])\circ\alpha^2(x)=\theta_T (\beta^2(v), \beta^2(a))\theta_T(u,v)
-\theta_T (\beta^2(v), \beta^2(a))\theta_T(u,w) \\
&\hspace{8cm} + D_T (\beta^2(v), \beta^2(w))\theta_T(u,a).
\end{align*}
Thus $(A,\rho_T,\theta_T,\alpha)$ is a representation of the Hom-Lie-Yamaguti algebra
$(V,[\cdot,\cdot]_T,[\cdot, \cdot, \cdot]_T,\beta).$
\end{proof}
Next, we give the coboundary operator of the Hom-Lie-Yamaguti algebra $(V, [\cdot, \cdot]_T, \Courant{\cdot, \cdot, \cdot}_T,\beta)$ with coefficients in the representation
$(A; \rho_T, \theta_T,\alpha)$. More precisely,  $\delta^T: (\huaF,\huaG)\mapsto (\delta^T_{\textrm{I}}\huaF, \delta^T_{\textrm{II}}\huaG)$  is given by
\begin{align*}
 &(\delta^T_{\textrm{I}}\huaF)(u_{1}, u_{2}, \cdots, u_{2n+2})\\
&=[T\beta^{2n}(u_{2n+1}),\huaG(u_{1}, \cdots, u_{2n}, u_{2n+2})]+T\rho(\beta^{2n}(u_{2n+1}))\huaG(u_{1}, \cdots, u_{2n}, u_{2n+2})\\
&+T\huaF(\huaG(u_{1}, \cdots, u_{2n}, u_{2n+2}),T(\beta^{2n}(u_{2n+1}))-[T\beta^{2n}(u_{2n+2}),\huaG(u_{1}, \cdots, u_{2n+1})]\\
&-T(\rho(\beta^{2n}(u_{2n+2}))\huaG(u_{1}, \cdots, u_{2n+1})
-T\huaF(\huaG(u_{1}, \cdots, u_{2n+1}),T\beta^{2n}(u_{2n+2}))\\
&-\huaG(\beta(u_{1}), \cdots, \beta(u_{2n}), \rho(Tu_{2n+1})u_{2n+2}-\rho(Tu_{2n+2})u_{2n+1}+\huaF(Tu_{2n+1},Tu_{2n+2}))\\
&+\sum\limits_{k=1}^{n}(-1)^{n+k+1}[T\beta^{2n-1}(u_{2k-1}), T\beta^{2n-1}(u_{2k})),\huaF(u_{1}, \cdots, \hat{u}_{2k-1}, \hat{u}_{2k}, \cdots, u_{2n+2})]\\&-T\theta(T\beta^{2n-1}(u_{2k})),\huaF(u_{1}, \cdots, \hat{u}_{2k-1}, \hat{u}_{2k}, \cdots, u_{2n+2}))\beta^{2n-1}(u_{2k-1})\\&-T\theta(T\beta^{2n-1}(u_{2k-1})),\huaF(u_{1}, \cdots, \hat{u}_{2k-1}, \hat{u}_{2k}, \cdots, u_{2n+2}))\beta^{2n-1}(u_{2k})\\&+T\huaG(T\beta^{2n-1}(u_{2k-1}), T\beta^{2n-1}(u_{2k})),T\huaF(u_{1}, \cdots, \hat{u}_{2k-1}, \hat{u}_{2k}, \cdots, u_{2n+2}))\\&+\sum\limits_{k=1}^{n}\sum\limits_{j=2k+1}^{2n+2}(-1)^{n+k}\huaF(\beta^2(u_{1}), \cdots, \hat{u}_{2k-1}, \hat{u}_{2k}, \cdots,D(Tu_{2k-1},Tu_{2k})u_{j}\\
&\hspace{2cm} 
+\theta(Tu_{2k},Tu_{j})u_{2k-1}-\theta(Tu_{2k-1},Tu_{j})u_{2k}
+\huaG(Tu_{2k-1},Tu_{2k},Tu_{j}), \cdots, \beta^2(u_{2n+2})).\\
   &(\delta_{\textrm{II}}\huaG)(u_{1}, u_{2}, \cdots, u_{2n+3})\\
&=\Courant{\huaG(x_{1},\cdots, u_{2n+1}),T\beta^{2n}(u_{2n+2}), T\beta^{2n}(u_{2n+3})}-T(D(\huaG(x_{1},\cdots, u_{2n+1}),T\beta^{2n}(u_{2n+2}))\beta^{2n}(u_{2n+3})\\
&-\theta(\huaG(x_{1},\cdots, u_{2n+1}),T\beta^{2n}(u_{2n+3}))\beta^{2n}(u_{2n+2})+\huaG(\huaG(x_{1},\cdots, u_{2n+1}),T\beta^{2n}(u_{2n+2}), T\beta^{2n}(u_{2n+3}))\\
&-\Courant{\huaG(u_{1}, \cdots, u_{2n}, u_{2n+2}),T\beta^{2n}(u_{2n+1}),T \beta^{2n}(u_{2n+3}))}\\
&+T(D(\huaG(u_{1}, \cdots, u_{2n}, u_{2n+2}),T\beta^{2n}(u_{2n+1})\beta^{2n}(u_{2n+3})\\
&-\theta(\huaG(u_{1}, \cdots, u_{2n}, u_{2n+2}),T\beta^{2n}(u_{2n+3}))\beta^{2n}(u_{2n+1})\\ &+\huaG(T\beta^{2n}(u_{2n+1}),T \beta^{2n}(u_{2n+3}),\huaG(u_{1}, \cdots, u_{2n}, u_{2n+2}))\\
&+\sum\limits_{k=1}^{n+1}(-1)^{n+k+1}\Courant{T\beta^{2n}(u_{2k-1}), T\beta^{2n}(u_{2k}),\huaG(u_{1}, \cdots, \hat{u}_{2k-1}, \hat{u}_{2k}, \cdots, u_{2n+3})}\\&-T\Big(\theta( T\beta^{2n}(u_{2k}),\huaG(u_{1}, \cdots, \hat{u}_{2k-1}, \hat{u}_{2k}, \cdots, u_{2n+3}))\beta^{2n}(u_{2k-1})\\&-\theta(T\beta^{2n}(u_{2k-1}),\huaG(u_{1}, \cdots, \hat{u}_{2k-1}, \hat{u}_{2k}, \cdots, u_{2n+3}))\beta^{2n}(u_{2k})\\&+\huaG(T\beta^{2n}(u_{2k-1}), T\beta^{2n}(u_{2k}),\huaG(u_{1}, \cdots, \hat{u}_{2k-1}, \hat{u}_{2k}, \cdots, u_{2n+3})\Big)\\
&+\sum\limits_{k=1}^{n+1}\sum\limits_{j=2k+1}^{2n+3}(-1)^{n+k}\huaG(\beta^2(u_{1}), \cdots, \hat{u}_{2k-1}, \hat{u}_{2k}, \cdots, \Courant{u_{2k-1}, u_{2k}, u_{j}}, \cdots, \beta^2(u_{2n+3})).\\
&+\sum\limits_{k=1}^{n+1}\sum\limits_{j=2k+1}^{2n+3}(-1)^{n+k}\huaG(\beta^2(u_{1}), \cdots, 
\hat{u}_{2k-1}, \hat{u}_{2k}, \cdots, D(Tu_{2k-1}, Tu_{2k}) u_{j}\\
&\hspace{7cm} +\theta(Tu_{2k}, Tu_{j})u_{2k-1}, \cdots, \beta^2(u_{2n+3})).
\end{align*}
With this coboundary the Hom-Lie-Yamaguti cochain forms a complex
$$ \mathfrak{C}^{1}(V,A)\overset{\delta_T^{1}}{\longrightarrow} \mathfrak{C}^{3}(V,A)\overset{\delta_T^{3}}{\longrightarrow}  \mathfrak{C}^{5}(V,A) \longrightarrow \cdots,$$
such that $\delta_T^{2n+1}\circ \delta_T^{2n-1}=0$ for any  $n \geq 1 $.
For any $\chi =(x_1,x_2) \in A  \times A $, we define $\partial_T(\chi) :V \rightarrow A$ by :
 \begin{equation}
    \partial_T(\chi)u=T\Big(D(\chi)u + \huaG(\chi,Tu)\Big)-[\chi,Tu]  \quad \forall  u\in V.
 \end{equation}
\begin{prop}\label{1-cocycle}
 Let $T$ be a twisted $\mathcal{O}$-operator on a 
Hom-Lie-Yamaguti algebra $(A,[\cdot,\cdot],\Courant{\cdot,\cdot,\cdot},\alpha)$ with respect to the representation $(V,\rho,\theta,\beta)$. Then $\partial_T(\chi)$ is a $1$-cocycle on the Hom-Lie-Yamaguti  algebra $(V,[\cdot,\cdot]_T,\Courant{\cdot,\cdot,\cdot}_T,\beta)$ with coefficients in $(A,\rho_T,\theta_T,\alpha)$.
\end{prop}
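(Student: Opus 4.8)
The plan is to unwind what ``$1$-cocycle'' means here into two explicit identities and then verify them by substitution, organising the (many) terms by their structural origin. By Proposition~\ref{PP} the triple $(V,[\cdot,\cdot]_T,\Courant{\cdot,\cdot,\cdot}_T,\beta)$ is a Hom-Lie-Yamaguti algebra and, by the preceding proposition, $(A,\rho_T,\theta_T,\alpha)$ is a representation of it, so the coboundary on $1$-cochains of $(V,[\cdot,\cdot]_T,\Courant{\cdot,\cdot,\cdot}_T,\beta)$ with coefficients in $(A,\rho_T,\theta_T,\alpha)$ is available. Thus $\partial_T(\chi)$ being a $1$-cocycle amounts to: $\partial_T(\chi)\circ\beta=\alpha\circ\partial_T(\chi)$ (the cochain condition, immediate from~\eqref{R1}, multiplicativity of $\alpha$, and the cochain property of $\huaF,\huaG$), together with the two identities
\begin{align*}
&\rho_T(u)\,\partial_T(\chi)(v)-\rho_T(v)\,\partial_T(\chi)(u)-\partial_T(\chi)\big([u,v]_T\big)=0,\\
&D_T(u,v)\,\partial_T(\chi)(w)+\theta_T(v,w)\,\partial_T(\chi)(u)-\theta_T(u,w)\,\partial_T(\chi)(v)-\partial_T(\chi)\big(\Courant{u,v,w}_T\big)=0
\end{align*}
for all $u,v,w\in V$, where $D_T(u,v)=\theta_T(v,u)-\theta_T(u,v)+\rho_T(\beta(u))\rho_T(v)-\rho_T(\beta(v))\rho_T(u)-\rho_T([u,v]_T)\circ\alpha$.

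Into these two identities I would substitute $\partial_T(\chi)(u)=T\big(D_{\rho,\theta}(x_1,x_2)u+\huaG(x_1,x_2,Tu)\big)-\Courant{x_1,x_2,Tu}$, the formulas~\eqref{inducedRep} for $\rho_T,\theta_T$ (hence for $D_T$), and~\eqref{V-HLY1}--\eqref{V-HLY2} for $[\cdot,\cdot]_T,\Courant{\cdot,\cdot,\cdot}_T$; then I would use~\eqref{R2}--\eqref{R3} and the fact (Proposition~\ref{PP}) that $T$ is a morphism, so that $T([u,v]_T)=[Tu,Tv]$ and $T(\Courant{u,v,w}_T)=\Courant{Tu,Tv,Tw}$, to pull $T$ outside every bracket. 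After full expansion the terms split into three families: (i) elements of $A$ that are iterated $[\cdot,\cdot]$- and $\Courant{\cdot,\cdot,\cdot}$-expressions in $x_1,x_2$ and the $Tu_i$ alone, which cancel by the Hom-Lie-Yamaguti axioms~\eqref{LY2}--\eqref{HLYa2homLeib} of $A$ applied to the tuple $(x_1,x_2,Tu,Tv)$ resp.\ $(x_1,x_2,Tu,Tv,Tw)$; (ii) terms $T(\,\cdot\,)$ whose argument is a $\rho,\theta,D_{\rho,\theta}$-expression in $u,v,w$, which cancel by the representation axioms~\eqref{RL1}--\eqref{RL10} of $(V,\rho,\theta,\beta)$; (iii) the terms carrying $\huaF$ or $\huaG$, which cancel by the $(2,3)$-cocycle equations, the first identity drawing on~\eqref{cocycle1} and~\eqref{cocycle3} (applied to $x_1,x_2,Tu,Tv$) and the second on~\eqref{cocycle2} and~\eqref{cocycle4}. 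Throughout, $\alpha$'s and $\beta$'s are commuted past $T,\rho,\theta,D_{\rho,\theta}$ using $T\circ\beta=\alpha\circ T$ and the equivariances in~\eqref{RL1}, so that each axiom is applied with the correct power of $\alpha$ inserted.

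It is worth keeping in mind a conceptual picture that both motivates the statement and serves as a check. In the twisted semidirect product $A\ltimes_{\huaF,\huaG}V$ the graph $\mathrm{Gr}(T)=\{(Tu,u)\mid u\in V\}$ is a subalgebra, isomorphic to $(V,[\cdot,\cdot]_T,\Courant{\cdot,\cdot,\cdot}_T,\beta)$ via $u\mapsto(Tu,u)$; a short computation identifies $A$ with $(A\oplus V)/\mathrm{Gr}(T)$ through $(z,w)\mapsto z-Tw$, carrying precisely the module structure $(\rho_T,\theta_T,\alpha)$, and under this identification the image of $\Courant{(x_1,0),(x_2,0),(Tu,u)}_\theta$ is $-\,\partial_T(\chi)(u)$. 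Equivalently, $T-\varepsilon\,\partial_T(\chi)$ is, to first order in $\varepsilon$, the transform of $T$ by the inner-derivation-type operator $\Courant{(x_1,0),(x_2,0),-}_\theta$ of $A\ltimes_{\huaF,\huaG}V$, and first-order transforms of a twisted $\mathcal O$-operator are exactly the $1$-cocycles. Making this rigorous for Hom-algebras nonetheless reduces to tracking the same $\tilde\alpha$-twisted identities as the direct approach, so the direct verification is what I would ultimately write down.

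The hard part is purely organisational: after expansion, especially of the second identity, there are several dozen terms and the work lies in grouping them so that each group is visibly a single instance of a single axiom. I would dispatch the first identity first, since it is shorter and uses only~\eqref{R2}, the axiom~\eqref{LY3}, the $\rho$- and $D$-parts of the representation axioms, and~\eqref{cocycle1},~\eqref{cocycle3}; then I would treat the second identity by peeling off the contributions of $\partial_T(\chi)$ one source term at a time — the $T(D_{\rho,\theta}(\chi)\,\cdot\,)$ part against~\eqref{RL9}--\eqref{RL10}, the $T\huaG(\chi,T\,\cdot\,)$ part against~\eqref{cocycle4}, and the $-\Courant{\chi,T\,\cdot\,}$ part against~\eqref{HLYa2homLeib} — so that at no stage is more than one identity in play.
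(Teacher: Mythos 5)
The paper states this proposition without any proof, so there is nothing to compare your argument against line by line; I can only assess your plan on its own terms. Your unwinding of the $1$-cocycle condition into the two identities $\rho_T(u)\partial_T(\chi)(v)-\rho_T(v)\partial_T(\chi)(u)-\partial_T(\chi)([u,v]_T)=0$ and $D_T(u,v)\partial_T(\chi)(w)+\theta_T(v,w)\partial_T(\chi)(u)-\theta_T(u,w)\partial_T(\chi)(v)-\partial_T(\chi)(\Courant{u,v,w}_T)=0$ is exactly right, and your bookkeeping of which axiom kills which family of terms is accurate: carrying out the expansion of the first identity, the terms $T(\rho(TQ(v))u)$ produced by \eqref{R2} cancel in pairs, the $\huaF(Tu,TQ(v))+\huaF(TQ(v),Tu)$ terms vanish by skew-symmetry, the residue $\Courant{\chi,[Tu,Tv]}-[\Courant{\chi,Tu},Tv]-[Tu,\Courant{\chi,Tv}]$ is an instance of \eqref{LY3}, the operator terms $-D_{\rho,\theta}(\chi)\rho(Tu)v+\rho(Tu)D_{\rho,\theta}(\chi)v+\rho(\Courant{\chi,Tu})v$ are an instance of \eqref{RL7}, and the remaining $\huaF,\huaG$ terms are precisely the negative of \eqref{cocycle3} evaluated at $(x_1,x_2,Tu,Tv)$. (For the first identity only \eqref{cocycle3} is actually needed, not \eqref{cocycle1}, but that is cosmetic.) The graph/quotient picture is a genuine added value over what the paper offers.

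The one place where your proposal stops short of a proof rather than a plan is the Hom-twisting bookkeeping, and this is not a purely organisational matter. With $\partial_T(\chi)u=T(D_{\rho,\theta}(x_1,x_2)u+\huaG(x_1,x_2,Tu))-\Courant{x_1,x_2,Tu}$ exactly as the paper defines it, the cancellations above require \eqref{LY3}, \eqref{RL7} and \eqref{cocycle3} in their twisted forms, e.g.\ $\Courant{\alpha(x_1),\alpha(x_2),[Tu,Tv]}=[\Courant{x_1,x_2,Tu},\alpha^2(Tv)]+[\alpha^2(Tu),\Courant{x_1,x_2,Tv}]$; the three residual bracket terms therefore only cancel if the arguments of $\partial_T(\chi)$ and of the cochain identities carry compatible powers of $\alpha$ and $\beta$ (or if one restricts to $\alpha(x_i)=x_i$). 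Saying that you would ``insert the correct power of $\alpha$'' defers exactly the step at which the verification could fail: to complete the proof you must either fix a twisted definition of $\partial_T(\chi)$ (say with $\alpha^{2}(x_i)$ in the ternary bracket and matching shifts elsewhere) and re-derive the two identities with those shifts in place, or verify that the paper's untwisted formulas are consistent. The same ambiguity is present in the paper's own displayed formulas, so this is a defect you have inherited rather than introduced, but a complete proof has to resolve it rather than flag it.
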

Let us describe cohomology of twisted $\mathcal{O}$-operator on
Hom-Lie-Yamaguti algebra $(A,[\cdot,\cdot],\Courant{\cdot,\cdot,\cdot},\alpha)$ with respect to the representation $(V,\rho,\theta,\beta)$. 
Let 
\begin{equation*}
  {\mathcal{Z}^n_T(V,A)}=\{f \in C^n_T(V,A)|d_T(f)=0\}, \quad 
  {\mathcal{B}^n_T(V,A)}=\{d_T(g)| g \in C^{n-1}_T(V,A)\}.
\end{equation*}
\begin{defn}
 Let $T$ be $(\huaF,\huaG)$-twisted $\mathcal{O}$-operator on a Hom-Lie-Yamaguti algebra $(A, [\cdot, \cdot], \Courant{\cdot,\cdot,\cdot},\alpha)$ 
with respect to the representation $(V, \rho, \theta,\beta)$.
 Denote the set of cocycles by ${\mathcal{Z}}^{\ast}(V,A)$, the set of coboundaries by ${\mathcal{B}}^{\ast}(V,A)$
 and the cohomology group by
 $${\mathbf{H}}_T^{\ast}(V,A)={\mathcal{Z}}^{\ast}(V,A) / {\mathcal{B}}^{\ast}(V,A).$$
\end{defn}
\section{Deformations of twisted $\mathcal O$-operator on a Hom-Lie-Yamaguti algebra}\label{Sec5}
In this section, we study  deformations  of twisted $\mathcal{O}$-operator on a Hom-Lie-Yamaguti algebra via the cohomology theory established in the former
section.
 Let $\mathbb{K}[[t]]$ be the ring of power series in one variable $t$. For any $\mathbb{K}$-linear space $V$, we denote by  $M[[t]]$ the
linear space of formal power series in $t$ with coefficients in $M$. If in addition, we have a structure of Hom-Lie-Yamaguti algebra $(A, [\cdot,\cdot],\Courant{\cdot,\cdot,\cdot},\alpha)$ over $\mathbb{K}$, then there is a  Hom-Lie-Yamaguti algebra structure over the ring $\mathbb{K}[[t]]$ on $A[[t]]$ given
by
\begin{equation}\label{S1}
\Big[\sum_{i=0}^{+\infty}t^{i}x_i,\sum_{j=0}^{+\infty}t^{j}y_j\Big\}=\sum_{s=0}^{+\infty}\sum_{i+j=s}t^{s}[x_i,y_j],
\end{equation}

\begin{equation}\label{S2}
\Big\{\sum_{i=0}^{+\infty}t^{i}x_i,\sum_{j=0}^{+\infty}t^{j}y_j,\sum_{k=0}^{+\infty}t^{k}z_k\Big\}=\sum_{s=0}^{+\infty}\sum_{i+j+k=s}t^{s}\{x_i,y_j,z_k\},\;\forall x_i,y_j,z_k\in A.
\end{equation}
For any representation
$(V,\rho,\theta,\beta)$ of a Hom-Lie-Yamaguti algebra $(A, [\cdot,\cdot],\Courant{\cdot,\cdot,\cdot},\alpha)$, there is a
nature representation of the Hom-Lie-Yamaguti algebra
$A[[t]]$ on the $\mathbb{K}[[t]]$-module $V[[t]]$, which is given by
\begin{equation}
    \rho\Big(\sum_{i=0}^{+\infty}t^{i}x_i\Big) \Big(\sum_{j=0}^{+\infty}t^{j}u_j\Big)=\sum_{s=0}^{+\infty}\sum_{i+j=s}t^{s}\rho(x_i)u_j ,
\end{equation}
\begin{equation}
\theta\Big(\sum_{i=0}^{+\infty}t^{i}x_i,\sum_{j=0}^{+\infty}t^{j}y_j\Big)\Big(\sum_{k=0}^{+\infty}t^{k}u_k\Big)=\sum_{s=0}^{+\infty}\sum_{i+j+k=s}t^{s}\theta(x_i,y_j)u_k,
\end{equation}
\begin{equation}
D\Big(\sum_{i=0}^{+\infty}t^{i}x_i,\sum_{j=0}^{+\infty}t^{j}y_j\Big)\Big(\sum_{k=0}^{+\infty}t^{k}u_k\Big)=\sum_{s=0}^{+\infty}\sum_{i+j+k=s}t^{s}D(x_i,y_j)u_k,\;\forall x_i,y_j\in A,\;u_k\in V.
\end{equation}
Similarly, the $(2,3)$-cocycle $(\huaF,\huaG)$ can be extended to a $(2,3)$-cocycle ( denoted by the same notation $(\huaF,\huaG)$) on the Hom-Lie-Yamaguti structure $A[[t]]$ with coefficients in $V[[t]]$. Consider a power series
\begin{equation}\label{T1}
T_t=\sum_{i=0}^{+\infty}t^{i}T_i,\quad\;T_i\in  Hom(V,A).
\end{equation}
That is $T_t\in  Hom(V,A[[t]])$. Extend it to be a $\mathbb{K}[[t]]$-module map from
$V[[t]]$ to $A[[t]]$ which is still denoted by $T_t$.
\begin{defn}
If the  power series $T_t=\displaystyle\sum_{i=0}^{+\infty}T_it^{i}$ with $T_0=T$ satisfies for all $u,v,w\in V$ the conditions  
\begin{align}
\beta T_t&=T_t\alpha,\\
\label{tt1}[T_t u,T_t v]&=T_t\Big(\rho(T_t u)v-\rho(T_t v)u+\huaF(T_t u,T_t v)\Big),\\ \label{tt2}
\{T_t u,T_t v,T_t w\}&=T_t\Big(D(T_t u,T_t v)w+\theta(T_t v,T_t w)u-\theta(T_t u,T_t w)v + \huaG(T_t u,T_t v,T_t w) \Big),
\end{align} 
then we say that it is a \textbf{formal deformation} of the  twisted $\mathcal O$-operator $T$.
\end{defn}
Using \eqref{S1}, \eqref{S2} and \eqref{T1}  to expand \eqref{tt1} and \eqref{tt2},  it becomes evident that  \eqref{tt1} and \eqref{tt2}  can be expressed as the following system of
equations $\forall u,v,w\in V$: 
\begin{align}
\label{tt}
\beta T_s&=T_s\alpha,\\
\sum_{i+j=s}[T_i u,T_j v]&=\sum_{i+j=s}T_i\Big(\rho(T_j u)v-\rho(T_j v)u\Big)+\sum_{i+j+k=s}T_i\Big(\huaF(T_j u,T_k v)\Big),\\ \nonumber
\sum_{i+j+k=s}\{T_i u,T_j v,T_k w\}&=\sum_{i+j+k=s}T_i\Big(D(T_j u,T_k v)w+\theta(T_j v,T_k w)u-\theta(T_j u,T_k w)v \Big) \nonumber\\&+\sum_{i+j+k+l=s} T_i\Big(\huaG(T_j u,T_k v,T_l w)\Big).  
\end{align}
 Observe that for $s=0$ and $T_0=T$,  \eqref{tt}-  \eqref{tt1} and \eqref{tt2}   give the
 definition of twisted $\mathcal O$-operator.
 \begin{prop}
 Let $T_t=\displaystyle\sum_{i=0}^{+\infty}T_it^{i}$ 
is a formal deformation of a twisted $\mathcal{O}$-operator 
on a Hom-Lie-Yamaguti algebra $(A, [\cdot,\cdot],\Courant{\cdot,\cdot,\cdot},\alpha)$ with respect to the representation $(V,\rho,\theta,\beta)$. Then,  
$T_1$
is a $1$-cocycle of the twisted $\mathcal{O}$-operator $T$, called the infinitesimal of the deformation $T_t$.   
 \end{prop}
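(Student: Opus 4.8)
The plan is to take the coefficient of $t^{1}$ in the deformation equations \eqref{tt1}--\eqref{tt2} and read off from it the cocycle condition $\delta_{T}^{1}(T_{1})=0$ for the complex $\mathfrak{C}^{1}(V,A)\overset{\delta_{T}^{1}}{\longrightarrow}\mathfrak{C}^{3}(V,A)$ attached, via the constructions of Section~\ref{Sec4}, to the Hom-Lie-Yamaguti algebra $(V,[\cdot,\cdot]_{T},\Courant{\cdot,\cdot,\cdot}_{T},\beta)$ with coefficients in $(A,\rho_{T},\theta_{T},\alpha)$. First, the order-$t$ part of $\beta T_{t}=T_{t}\alpha$ gives $\beta T_{1}=T_{1}\alpha$, so $T_{1}$ lies in $\mathfrak{C}^{1}(V,A)$ and it is meaningful to ask whether it is a cocycle. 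Specialising the $1$-cochain coboundary of the Hom-Lie-Yamaguti cohomology (Section~\ref{Sec2}) to the present data, being a $1$-cocycle means exactly that
\begin{align*}
&\rho_{T}(u)T_{1}(v)-\rho_{T}(v)T_{1}(u)-T_{1}([u,v]_{T})=0,\\
&D_{T}(u,v)T_{1}(w)+\theta_{T}(v,w)T_{1}(u)-\theta_{T}(u,w)T_{1}(v)-T_{1}(\Courant{u,v,w}_{T})=0
\end{align*}
for all $u,v,w\in V$, with $[\cdot,\cdot]_{T}$, $\Courant{\cdot,\cdot,\cdot}_{T}$, $\rho_{T}$, $\theta_{T}$, $D_{T}$ as in \eqref{V-HLY1}--\eqref{V-HLY2} and \eqref{inducedRep}.

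For the first identity I would expand the $s=1$ instance of \eqref{tt1}: using $T_{0}=T$ and writing out $\sum_{i+j=1}$ and $\sum_{i+j+k=1}$ gives
\[
[Tu,T_{1}v]+[T_{1}u,Tv]=T_{1}\big(\rho(Tu)v-\rho(Tv)u+\huaF(Tu,Tv)\big)+T\big(\rho(T_{1}u)v-\rho(T_{1}v)u+\huaF(T_{1}u,Tv)+\huaF(Tu,T_{1}v)\big).
\]
The term with $T_{1}$ applied on the outside is precisely $T_{1}([u,v]_{T})$ by \eqref{V-HLY1}. Using the definition \eqref{inducedRep} of $\rho_{T}$ together with the skew-symmetry of $\huaF$ and of $[\cdot,\cdot]$, the two brackets on the left and the remaining $T(\cdots)$ term reorganise into exactly $\rho_{T}(u)T_{1}(v)-\rho_{T}(v)T_{1}(u)$; this is $(\delta_{T}^{1}T_{1})_{\mathrm{I}}=0$.

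The second identity is handled the same way. Expanding the $s=1$ instance of \eqref{tt2}, the left-hand side is $\Courant{T_{1}u,Tv,Tw}+\Courant{Tu,T_{1}v,Tw}+\Courant{Tu,Tv,T_{1}w}$, the single right-hand term with $T_{1}$ on the outside is $T_{1}(\Courant{u,v,w}_{T})$ by \eqref{V-HLY2}, and each remaining right-hand term carries exactly one $T_{1}$ inside an outer $T$. One then checks that these leftover terms reassemble into $D_{T}(u,v)T_{1}(w)+\theta_{T}(v,w)T_{1}(u)-\theta_{T}(u,w)T_{1}(v)$, using the formulas \eqref{inducedRep} for $\rho_{T},\theta_{T}$ (hence also the form of $D_{T}$ recorded in the proof that $(A,\rho_{T},\theta_{T},\alpha)$ is a representation), the skew-symmetry of $\huaG$ in its first two arguments, and the antisymmetry of the brackets of $A$ in their first two slots (needed, e.g., to recast $\Courant{Tu,T_{1}v,Tw}$ as $-\Courant{T_{1}v,Tu,Tw}$). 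This gives $(\delta_{T}^{1}T_{1})_{\mathrm{II}}=0$, and together with the first identity, $\delta_{T}^{1}T_{1}=0$, i.e. $T_{1}\in\mathcal{Z}^{1}_{T}(V,A)$.

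The compatibility with $\alpha,\beta$ and the binary identity are immediate; the only real work is the ternary step, where the obstacle is purely organisational: one must track which of the terms in $\sum_{i+j+k=1}$ and $\sum_{i+j+k+l=1}$ place $T_{1}$ outside versus inside the outer $T$, and then re-index carefully with the various antisymmetries so that the leftover terms reproduce the somewhat long expressions $D_{T}$ and $\theta_{T}$ of \eqref{inducedRep} without error.
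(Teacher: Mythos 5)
Your proposal is correct and follows essentially the same route as the paper: extract the coefficient of $t^{1}$ from the deformation equations, recognise the outer-$T_{1}$ terms as $T_{1}([u,v]_{T})$ and $T_{1}(\Courant{u,v,w}_{T})$, and reassemble the remaining terms into $\rho_{T}(u)T_{1}(v)-\rho_{T}(v)T_{1}(u)$ and $D_{T}(u,v)T_{1}(w)+\theta_{T}(v,w)T_{1}(u)-\theta_{T}(u,w)T_{1}(v)$ via the skew-symmetries of $[\cdot,\cdot]$, $\huaF$, $\huaG$. In fact you spell out the reorganisation step in more detail than the paper, which stops at the order-$t$ equations and asserts the conclusion by ``direct deduction.''
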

\begin{proof} For $u,v,w\in V$,  
    \begin{align*}
        & \beta T_1=T_1\alpha,\\
        & [T_1 u,Tv]+[Tu,T_1 v]=T_1\big(\rho(Tu)v-\rho(Tv)u\big)+T\big(\rho(T_1u)v-\rho(T_1v)u\big)+T_1 \huaF(Tu,Tv)\\
        &+T\huaF(T_1 u,Tv)+T\huaF(Tu,T_1v),\\ 
        &\{T_1 u,Tv,Tw\}+\{T u,T_1v,Tw\}+\{T u,Tv,T_1w\}=T_1\Big(D(Tu,Tv)w+\theta(Tv,Tw)u-\theta(Tu,Tw)v\Big)\\
        &+T\Big(D(T_1u,Tv)w+\theta(T_1v,Tw)u-\theta(T_1u,Tw)v\Big)\\
        &+T\Big(D(Tu,T_1v)w+\theta(Tv,T_1w)u-\theta(Tu,T_1w)v\Big)\\
        &+T_1\big(\huaG(Tu,Tv,Tw)\big)+T\big(\huaG(T_1u,Tv,Tw)\big)\\
        &+T\big(\huaG(Tu,T_1v,Tw)\big)+T\big(\huaG(Tu,Tv,T_1w)\big).
    \end{align*} Direct deduction shows that $T_1 \in  C^1_{\rm HLY}(A,V) $
is a $1$-cocycle   in the cohomology of the twisted $\mathcal O$-operator T. Further, the $1$-cocycle $T_1$ is called the infinitesimal
of the formal deformation $T_t$ of $T$.
\end{proof}
 Let $T$ be a twisted $\mathcal O$-operator on a Hom-Lie-Yamaguti algebra $(A, [\cdot,\cdot],\Courant{\cdot,\cdot,\cdot},\alpha)$ with respect to the representation $(V,\rho,\theta,\beta)$, and   $T_t=\displaystyle\sum_{i=0}^{+\infty}T_it^{i}$ and $T^{'}_t=\displaystyle\sum_{i=0}^{+\infty}T^{'}_it^{i}$ 
are two formal deformations of a twisted $\mathcal{O}$-operator on $A$.
\begin{defn}
Two formal deformations $T_t$ and $T^{'}_t$ are called equivalent if there exists
an element $\chi \in A\wedge A$, $\phi_i\in End(A)$ and $\psi_i\in End(V)$ $(i\geq2)$, such that the couple 
\begin{align}
    \label{eq:equi1}
\phi_t=&Id_A+t[\chi,-]+\sum_{i=2}^{+\infty}\phi_it^{i},\\ \label{eq:equi2} \psi_t=&Id_V+t \Big(D(\chi)(-)+ \huaG(\chi,T-)\Big)+\sum_{i=2}^{+\infty}\psi_it^{i}.
\end{align}
is a morphism of   twisted $\mathcal O$-operators   from  $T_t$ to $T^{'}_t$ given in Definition \ref{Tmorphism}
\end{defn}
Let $T$ be a twisted $\mathcal O$-operator on a Hom-Lie-Yamaguti algebra  $(A,[\cdot,\cdot],\Courant{\cdot,\cdot,\cdot},
\alpha)$
with respect to the representation 
$(V,\rho,\theta,\beta)$.
\begin{thm}
    Consider two equivalent formal deformations $T_t=\displaystyle\sum_{i=0}^{+\infty}T_it^{i}$ and $T^{'}_t=\displaystyle\sum_{i=0}^{+\infty}T^{'}_it^{i}$ of $T$ , then $T_t$
and $T^{'}_t$ define the same cohomology class in ${\mathbf{H}}_T^{1}(V,A)$.
\end{thm}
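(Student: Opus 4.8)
The plan is to read off the statement from the first‑order part of the equivalence. By hypothesis there is $\chi\in A\wedge A$ together with families $\phi_i\in\End(A)$, $\psi_i\in\End(V)$ ($i\geq 2$) such that the pair $(\phi_t,\psi_t)$ of \eqref{eq:equi1}--\eqref{eq:equi2} is a morphism of twisted $\mathcal O$-operators from $T_t$ to $T'_t$. By Definition \ref{Tmorphism} this entails, in particular, the identity of $\mathbb K[[t]]$-module maps $V[[t]]\to A[[t]]$
\begin{equation*}
T_t\circ\psi_t=\phi_t\circ T'_t,
\end{equation*}
and this single relation is all I will use; the Hom-Lie-Yamaguti morphism conditions on $\phi_t$ and on $\psi_t$ play no role in the cohomology-class assertion. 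First I would substitute $T_t=\sum_i t^iT_i$ and $T'_t=\sum_i t^iT'_i$ with $T_0=T'_0=T$, together with $\phi_t=\mathrm{Id}_A+t[\chi,-]+O(t^2)$ and $\psi_t=\mathrm{Id}_V+t\big(D(\chi)(-)+\huaG(\chi,T-)\big)+O(t^2)$, and then expand both sides in powers of $t$.

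The coefficient of $t^0$ merely reproduces $T=T$. Comparing the coefficients of $t^1$ — where the terms $\phi_i,\psi_i$ with $i\geq 2$ contribute only from order $t^2$ onward — gives, for every $u\in V$,
\begin{equation*}
T_1(u)+T\big(D(\chi)u+\huaG(\chi,Tu)\big)=T'_1(u)+[\chi,Tu].
\end{equation*}
Recalling the defining formula $\partial_T(\chi)u=T\big(D(\chi)u+\huaG(\chi,Tu)\big)-[\chi,Tu]$, this rearranges to $T_1-T'_1=-\,\partial_T(\chi)$.

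To conclude, I would invoke that $\partial_T$ is precisely the coboundary map producing $1$-coboundaries out of elements of $A\wedge A$ (the content of Proposition \ref{1-cocycle} together with the definition of ${\mathcal{B}}^{1}_T(V,A)$), so that $T_1-T'_1\in{\mathcal{B}}^{1}_T(V,A)$; since $T_1$ and $T'_1$ are the infinitesimals of $T_t$ and $T'_t$, hence both $1$-cocycles by the previous proposition, it follows that $[T_1]=[T'_1]$ in ${\mathbf{H}}_T^{1}(V,A)$, which is the claim. The computation is short and essentially bookkeeping; the only point that needs care is the precise matching of the first-order parts of $\phi_t$ and $\psi_t$ with the formula for $\partial_T$, since it is exactly this matching that makes the difference $T_1-T'_1$ visibly a coboundary rather than merely a cocycle.
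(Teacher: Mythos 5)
Your proposal is correct and follows essentially the same route as the paper: expand the morphism relation $T_t\circ\psi_t=\phi_t\circ T'_t$ to first order in $t$, identify $T'_1-T_1=\partial_T(\chi)$, and conclude via Proposition \ref{1-cocycle} that the difference is a coboundary. Your version is in fact slightly cleaner, since you use the type-correct form of the morphism condition from Definition \ref{Tmorphism} (the paper's proof writes the composition with $\phi_t$ and $\psi_t$ swapped, which does not typecheck).
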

\begin{proof}
    Let $\phi_t$ and $\psi_t$ are two linear maps defined in Equations \eqref{eq:equi1} and \eqref{eq:equi2} respectively  such that
two deformations  $T_t=\displaystyle\sum_{i=0}^{+\infty}T_it^{i}$ and $T^{'}_t=\displaystyle\sum_{i=0}^{+\infty}T^{'}_it^{i}$ are equivalent. Since we have $T_t(\phi_t(u))=\psi_t(T^{'}_t(u)), \forall u\in V$, then we have $ T^{'}_1(u)-T_1(u)=T\Big(D(\chi)u + \huaG(\chi,Tu)\Big)-[\chi,Tu]$. According to Proposition \ref{1-cocycle} it follows that $ T^{'}_1(u)-T_1(u)=\partial_T(\chi)u \in \mathcal{B}^1_T(V,A)$. Which implies that $T_t$
and $T^{'}_t$ define the same cohomology class in ${\mathbf{H}}_T^{1}(V,A)$.
\end{proof}
\section{NS-Hom-Lie-Yamaguti algebras}\label{Sec6}
In this Section, we introduce the notion of a Hom-NS-Lie-Yamaguti algebra. A Hom-NS-Lie-Yamaguti algebra gives
rise to a Hom-Lie-Yamaguti algebra and a representation on itself. We show that a twisted $\mathcal O$-operator on a Hom-Lie-Yamaguti algebra
induces an Hom-NS-Lie-Yamaguti algebra. Then, Hom-NS-Lie-Yamaguti algebra can be viewed as the underlying algebraic
structures of twisted $\mathcal{O}$-operator on Hom-Lie-Yamaguti algebra. Also, we show that an Hom-NS-Lie algebra give rise to
an Hom-NS-Lie-Yamaguti algebra.
\subsection{Definition and constructions}
\begin{defn}
An NS-Hom-Lie-Yamaguti algebra is a linear space $A$ together with two bylinear operations  $\curlyvee , \circ: A\times A\to A$ in which $\curlyvee$ is skew-symmetric and 
   $\{\cdot,\cdot,\cdot \}$,  $[\cdot,\cdot,\cdot]:A\times A\times A\to A$ are two multilinear ternary operations, which with respect to a linear operator $\alpha:A\to A$ satisfy the following identities for all $x_1,x_2,x_3,x_4\in A:$  
   \begin{equation}
        \label{NScocycle1}
\big([x_1,x_2]^*\curlyvee \alpha(x_3)-\alpha(x_1)\circ( x_2\curlyvee x_3)+[x,y,z]\big)=0,\end{equation}
  \begin{equation} \label{NScocycle2}\circlearrowleft_{x_1,x_2,x_3}\big(\{x_2\curlyvee x_3,\alpha(x_1),\alpha(x_4)\}+[[x_1,x_2]^*,\alpha(x_3),\alpha(x_4)]\big)=0,\end{equation}
  \begin{align} 
&\circlearrowleft_{x_1,x_2,x_3}\{[x_1,x_2]^*,\alpha(x_3),\alpha(x_4)\}=0,\end{align}
  \begin{align}
       &\{\alpha(x_4),[x_1,x_2]^*,\alpha(x_3)\}=\{x_2\circ x_4,\alpha(x_1),\alpha(x_3)\}-\{x_1\circ x_4,\alpha(x_2),\alpha(x_3)\},\end{align}
  \begin{align}\label{39}
       &\{\alpha(x_1),\alpha(x_2),x_3\circ x_4\}^{\wedge}=\alpha^2(x_3)\circ \{x_1,x_2,x_4\}^{\wedge}+\Courant{x_1,x_2,x_3}\circ\alpha^2(x_4),\end{align}
  \begin{align}\label{40}
       &\{\alpha(x_4),\alpha(x_1),[x_2,x_3]^*\}=\alpha^2(x_2)\circ\{x_4,x_1,x_3\}-\alpha^2(x_3)\circ \{x_4,x_1,x_2\},\end{align}
  \begin{align}\nonumber 
&\{\alpha^2(x_1),\alpha^2(x_2),\{x_1,x_2,x_4\}\}^{\wedge}\\
&=\{\{x_1,x_2,x_5\}^{\wedge},\alpha^2(x_3),\alpha^2(x_4)\}+\{\alpha^2(x_5),\Courant{x_1,x_2,x_3},\alpha^2(x_4)\}+\{\alpha^2(x_5),\alpha^2(x_3),\Courant{x_1,x_2,x_4}\},\end{align}
  \begin{align}
\nonumber & \{\alpha^2(x_5),\alpha^2(x_1),\Courant{x_2,x_3,x_4}\}\\
&=\{\{x_5,x_1,x_2\},\alpha^2(x_3),\alpha^2(x_4)\}-\{\{x_5,x_1,x_3\},\alpha^2(x_2),\alpha^2(x_4)\}+\{\alpha^2(x_2),\alpha^2(x_3),\{x_5,x_1,x_4\}\}^{\wedge},
  \end{align}
  \begin{align}    \label{NScocycle3}
&\{\alpha(x_1),\alpha(x_2),x_3\curlyvee x_4\}+\alpha^2(x_4)\circ [x_1,x_2,x_3]-\Courant{x_1,x_2,x_3}\curlyvee \alpha^2(x_4)\nonumber\\&\quad\quad\quad+[\alpha(x_1),\alpha(x_2),[x_3,x_4]^*]-\alpha^2(x_3)\circ [x_1,x_2,x_4]-\alpha^2(x_3)\curlyvee \Courant{x_1,x_2,x_4})=0,\end{align}
\begin{align} 
  &\nonumber \{[x_1,x_2,x_3],\alpha^2(x_4),\alpha^2(x_5)\}^\wedge-\{[x_1,x_2,x_4],\alpha^2(x_3),\alpha^2(x_5)\}+\{[x_3,x_4,x_5],\alpha^2(x_1),\alpha^2(x_2)\}^\wedge \\ \nonumber&-\{[x_1,x_2,x_5],\alpha^2(x_3),\alpha^2(x_4)\}^\wedge -[\Courant{x_1,x_2,x_3},\alpha^2(x_4),\alpha^2(x_5)]-
[\alpha^2(x_3),\Courant{x_1,x_2,x_4},\alpha^2(x_5)]\\&+[\alpha^2(x_1),\alpha^2(x_2),\Courant{x_3,x_4,x_5}]-[\alpha^2(x_3),\alpha^2(x_4),\Courant{x_1,x_2,x_5}]=0.\label{NScocycle4}
\end{align}
       where $[\cdot,\cdot]^*$, $\{\cdot,\cdot,\cdot\}^{\wedge}$ and   $\Courant{\cdot,\cdot,\cdot}$ are defined by
       \begin{align}
         &[x_1,x_2]^*=x_1\circ x_2-x_2\circ x_1 +x_1\curlyvee x_2,\\ 
         &\{x_1,x_2,x_3\}^{\wedge}=\{x_3,x_2,x_1\}-
         \{x_3,x_1,x_2\}+\alpha(x_1)\circ(x_2\circ x_3) \nonumber  \\
         &\hspace{5cm} -\alpha(x_2)\circ(x_1 \circ x_3)-[x_1,x_2]^*\circ \alpha(x_3),\\
         &\Courant{x_1,x_2,x_3}=\{x_1,x_2,x_3\}^{\wedge}+\{x_1,x_2,x_3\}-\{x_2,x_1,x_3\}+[ x_1,x_2,x_3].
         \end{align}
\end{defn}
\begin{re}
    If the trilinear map $\{\cdot, \cdot, \cdot\}$ and the bilinear map $\circ$ in the above definition are trivial, then $(A,\curlyvee,[\cdot,\cdot,\cdot],\alpha)$ becomes  a Hom-Lie-Yamaguti algebra. On the other hand, if  $[\cdot,\cdot,\cdot]$ and $\curlyvee$ are trivial, one gets the Hom version of pre-Lie-Yamaguti algebra given in \textnormal{\cite{Sheng1}}. Thus, NS-Hom-Lie-Yamaguti algebras
are a generalization of both Hom-Lie-Yamaguti algebras and Hom-pre-Lie-Yamaguti algebras. Moreover if $\{\cdot, \cdot, \cdot\}$, $[\cdot,\cdot]$ and $\circ$ are trivial, then $(A,[\cdot,\cdot,\cdot],\alpha)$ is a Hom-Lie triple system. On the other hand, if $[\cdot,\cdot,\cdot], \curlyvee$ and $\circ$ are trivial, one gets the Hom version of pre-Lie-triple system given in \textnormal{\cite{plts}}.
\end{re}
\begin{thm}
Under the above notations,$(A,[\cdot,\cdot]^*,\Courant{\cdot,\cdot,\cdot},\alpha)$ is a Hom-Lie-Yamaguti algebra, which is called the
sub-adjacent Hom-Lie-Yamaguti algebra of $(A,\curlyvee,\{\cdot,\cdot,\cdot\},\circ,\Courant{\cdot,\cdot,\cdot})$  and denoted by $\mathcal{A}$. Moreover, $(A,\rho,\theta)$  is a representation of $\mathcal{A}$, where the
linear map $\rho:A\rightarrow End(A)$ and the
bilinear map $\theta:A\times A\rightarrow End(A)$ are  defined for all $x,y,z\in A$ by 
\begin{align*}
    &\rho(x)y=x\circ y\quad \text{and} \quad\theta(x,y)z=\{z,x,y\}.
\end{align*}
\end{thm}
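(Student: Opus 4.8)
The plan is to verify, axiom by axiom, that $(A,[\cdot,\cdot]^{*},\Courant{\cdot,\cdot,\cdot},\alpha)$ satisfies the five conditions of Definition \ref{LY} and that $(A,\rho,\theta,\alpha)$ satisfies the ten conditions of Definition \ref{repHlieY}, each time recognising the required relation as a rearrangement of one (occasionally two) of the NS-Hom-Lie-Yamaguti identities \eqref{NScocycle1}--\eqref{NScocycle4} together with their auxiliary relations. The two skew-symmetries come for free: $[x_1,x_2]^{*}=-[x_2,x_1]^{*}$ since $x_1\circ x_2-x_2\circ x_1$ is skew and $\curlyvee$ is skew-symmetric, and $\Courant{x_1,x_2,x_3}=-\Courant{x_2,x_1,x_3}$ since, after using the skew-symmetry of $[\cdot,\cdot]^{*}$, the map $\{\cdot,\cdot,\cdot\}^{\wedge}$ is manifestly skew in its first two slots, $\{x_1,x_2,x_3\}-\{x_2,x_1,x_3\}$ changes sign under $x_1\leftrightarrow x_2$, and $[\cdot,\cdot,\cdot]$ is skew in its first two arguments. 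Compatibility of $\alpha$ with $[\cdot,\cdot]^{*}$ and $\Courant{\cdot,\cdot,\cdot}$ follows from the multiplicativity of $\alpha$ with respect to $\circ$, $\curlyvee$, $\{\cdot,\cdot,\cdot\}$ and $[\cdot,\cdot,\cdot]$.

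The computational backbone is the observation that, with $\rho(x)y=x\circ y$, $\theta(x,y)z=\{z,x,y\}$ and the module endomorphism taken to be $\alpha$ itself, the formula for $D_{\rho,\theta}$ of Definition \ref{repHlieY} collapses to
\begin{equation*}
D_{\rho,\theta}(x,y)z=\{z,y,x\}-\{z,x,y\}+\alpha(x)\circ(y\circ z)-\alpha(y)\circ(x\circ z)-[x,y]^{*}\circ\alpha(z)=\{x,y,z\}^{\wedge},
\end{equation*}
so that $[x,y]^{*}=\rho(x)y-\rho(y)x+\huaF(x,y)$ and $\Courant{x,y,z}=D_{\rho,\theta}(x,y)z+\theta(y,z)x-\theta(x,z)y+\huaG(x,y,z)$ with $\huaF=\curlyvee$ and $\huaG=[\cdot,\cdot,\cdot]$. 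Feeding these data into Definition \ref{repHlieY} turns the representation conditions into the NS-identities essentially verbatim: \eqref{RL1} is the $\alpha$-multiplicativity of $\circ$ and $\{\cdot,\cdot,\cdot\}$; \eqref{RL6} is the (unnumbered) identity immediately following \eqref{NScocycle2}; \eqref{RL7} and \eqref{RL8} are \eqref{39} and \eqref{40}; \eqref{RL9} and \eqref{RL10} are the two (unnumbered) ``fundamental-identity'' axioms of the definition; and \eqref{RL5} follows from \eqref{NScocycle2} together with the cyclic identity $\circlearrowleft_{x_1,x_2,x_3}\{[x_1,x_2]^{*},\alpha(x_3),\alpha(x_4)\}=0$. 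The Hom-Lie-Yamaguti identities for $\mathcal A$ are obtained analogously: \eqref{LY1} from \eqref{NScocycle1}; \eqref{LY2} from \eqref{NScocycle2} and that same cyclic identity; \eqref{LY3} from \eqref{NScocycle3} together with \eqref{39} and \eqref{40}; and the ternary Hom--Nambu identity \eqref{HLYa2homLeib} from \eqref{NScocycle4} together with the two fundamental-identity axioms. In each case one expands $\Courant{\cdot,\cdot,\cdot}$ via its definition as $\{\cdot,\cdot,\cdot\}^{\wedge}$ plus $\{\cdot,\cdot,\cdot\}$ minus the same with the first two entries swapped plus $[\cdot,\cdot,\cdot]$, expands $[\cdot,\cdot]^{*}$ via $\circ$ and $\curlyvee$, and matches the resulting blocks against the blocks of the corresponding NS-identity.

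The main obstacle I anticipate is purely the size of the last two families of identities, \eqref{HLYa2homLeib} on the one hand and \eqref{RL9}, \eqref{RL10} on the other: once $\Courant{\cdot,\cdot,\cdot}$ is completely unfolded, the fundamental identity becomes a sum of several dozen terms built from $\circ$, $\{\cdot,\cdot,\cdot\}$, $\{\cdot,\cdot,\cdot\}^{\wedge}$ and $[\cdot,\cdot,\cdot]$, which have to be regrouped exactly along \eqref{NScocycle4} and the two auxiliary fundamental-identity axioms. I would keep this under control by first recording separately the (short) identities obeyed by $\{\cdot,\cdot,\cdot\}^{\wedge}=D_{\rho,\theta}$, and then treating each of the four summands of $\Courant{\cdot,\cdot,\cdot}$ in turn, so that no individual computation is unwieldy; a minor additional bookkeeping burden is reconciling the powers of $\alpha$ occurring in the twisted terms of Definition \ref{repHlieY} with the $\alpha$-multiplicativity relations.
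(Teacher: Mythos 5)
Your plan is correct and follows essentially the same route as the paper: a term-by-term verification in which each Hom-Lie-Yamaguti axiom (and each representation axiom) is matched against the corresponding NS-identity after expanding $[\cdot,\cdot]^{*}$ and $\Courant{\cdot,\cdot,\cdot}$ through $\circ$, $\curlyvee$, $\{\cdot,\cdot,\cdot\}$, $\{\cdot,\cdot,\cdot\}^{\wedge}$ and $[\cdot,\cdot,\cdot]$ --- indeed the paper's proof carries out exactly this expansion for the single identity \eqref{LY3} and declares the remaining checks similar. Your explicit identification $D_{\rho,\theta}(x,y)z=\{x,y,z\}^{\wedge}$ together with $(\huaF,\huaG)=(\curlyvee,[\cdot,\cdot,\cdot])$ is a useful organizing device that the paper leaves implicit, and your outline is in fact more complete than the published argument, which never verifies the representation half of the statement.
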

\begin{proof}
For $x,y,z,w \in A$, by \eqref{39}-\eqref{40}, we have 
\begin{align*}
   & \Courant{\alpha(x),\alpha(y),[z,w]^*}-[\Courant{x,y,z},\alpha^2(w)]^*-[\alpha^2(z),\Courant{x,y,w}]^*\\
   &=[\alpha(x),\alpha(y),[z,w]^*]+\{\alpha(x),\alpha(y),[z,w]^*\}^{\wedge}+\{\alpha(x),\alpha(y),[z,w]^*\}-\{\alpha(y),\alpha(x),[z,w]^*\}\\
   &\quad -\Courant{x,y,z}\circ \alpha^2(w)+\alpha^2(w)\circ \Courant{x,y,z}-\Courant{x,y,z}\curlyvee \alpha^2(w)\\
   &\quad -\alpha^2(z) \circ \Courant{x,y,w}+\Courant{x,y,w}\circ \alpha^2(z)-\alpha^2(z)\curlyvee \Courant{x,y,w}\\
   &=[\alpha(x),\alpha(y),[z,w]^*]+\{\alpha(x),\alpha(y),z\circ w\}^{\wedge}+\{\alpha(x),\alpha(y),w\circ z\}^{\wedge}+\{\alpha(x),\alpha(y),z\curlyvee w\}^{\wedge}\\
   &\quad +\{\alpha(x),\alpha(y),[z,w]^*\}-\{\alpha(y),\alpha(x),[z,w]^*\}-\Courant{x,y,z}\circ \alpha^2(w)+\alpha^2(w)\circ \Courant{x,y,z}\\
   &\quad -\Courant{x,y,z}\curlyvee \alpha^2(w)-\alpha^2(z) \circ \Courant{x,y,w}+\Courant{x,y,w}\circ \alpha^2(z)-\alpha^2(z)\curlyvee \Courant{x,y,w}\\
   &=[\alpha(x),\alpha(y),[z,w]^*]+\underline{\{\alpha(x),\alpha(y),z\circ w\}^{\wedge}}+\underline{\underline{\{\alpha(x),\alpha(y),w\circ z\}^{\wedge}}}+\{\alpha(x),\alpha(y),z\curlyvee w\}^{\wedge}\\
   &\quad +\underline{\underline{\underline{\{\alpha(x),\alpha(y),[z,w]^*\}}}}-\underline{\underline{\underline{\underline{\{\alpha(y),\alpha(x),[z,w]^*\}}}}}-\underline{\Courant{x,y,z}\circ \alpha^2(w)}\\
   &\quad +\alpha^2(w)\circ [x,y,z]+\underline{\underline{\alpha^2(w)\circ 
   \{x,y,z\}^{\wedge}}}+\underline{\underline{\underline{\alpha^2(w)\circ \{x,y,z\}}}}-\underline{\underline{\underline{\underline{\alpha^2(w)\circ \{y,x,z\}}}}}\\
   &\quad -\Courant{x,y,z}\curlyvee \alpha^2(w)-\alpha^2(z) \circ [x,y,w]-\underline{\alpha^2(z) \circ \{x,y,w\}^{\wedge}}-\underline{\underline{\underline{\alpha^2(z) \circ \{x,y,w\}}}}\\
   &\quad +\underline{\underline{\underline{\underline{\alpha^2(z) \circ \{y,x,w\}}}}}+\underline{\underline{\Courant{x,y,w}\circ \alpha^2(z)}}-\alpha^2(z)\curlyvee \Courant{x,y,w}=0.
\end{align*}
The other identities can be checked similarly. Thus $(A,[\cdot,\cdot]^*,\Courant{\cdot,\cdot,\cdot},\alpha)$ is a Hom-Lie-Yamaguti algebra.
\end{proof}
The following results illustrate that NS-Hom-Lie-Yamaguti algebras can be viewed as the underlying algebraic
structures of twisted $\mathcal{O}$-operators on Hom-Lie-Yamaguti algebras. 
\begin{thm}
    Let $T: V\rightarrow A$ a
    $(\huaF,\huaG)$-twisted $\mathcal{O}$-operator on the Hom-Lie-Yamaguti algebra with respect to a representation $(V,\rho,\theta,\beta)$. Then,    
    \begin{align}
    &  u\circ v= \rho(Tu)v, \quad u\curlyvee v=\huaF(Tu,Tv),\\ 
    &  \{u,v,w\}=\theta(Tv,Tw)u, \quad[u,v,w]=\huaG(Tu,Tv,Tw), \quad \forall u,v,w\in V,  
    \end{align}
    define a NS-Hom-Lie-Yamaguti algebra  structure on $V$. It is called the \textbf{induced NS-Hom-Lie-Yamaguti algebra}.
\end{thm}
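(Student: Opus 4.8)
The plan is to verify the defining identities \eqref{NScocycle1}--\eqref{NScocycle4} of an NS-Hom-Lie-Yamaguti algebra directly (with $\beta$ playing the role of the structure endomorphism), the key being a short ``dictionary'' expressing every occurrence of $T$ in those identities through the structure brackets of $A$. First I would record, for all $u,v,w\in V$, that the derived binary bracket of the candidate structure is the $T$-twisted one of Proposition~\ref{PP}:
\[
[u,v]^*=u\circ v-v\circ u+u\curlyvee v=\rho(Tu)v-\rho(Tv)u+\huaF(Tu,Tv)=[u,v]_T,
\]
so that, by \eqref{R2}, $T([u,v]^*)=[Tu,Tv]$. Substituting the four operations into $\{u,v,w\}^{\wedge}$ and using $T\circ\beta=\alpha\circ T$, \eqref{R2}, and the definition of $D_{\rho,\theta}$ from Definition~\ref{repHlieY}, one finds $\{u,v,w\}^{\wedge}=D_{\rho,\theta}(Tu,Tv)w$; hence $\Courant{u,v,w}=D_{\rho,\theta}(Tu,Tv)w+\theta(Tv,Tw)u-\theta(Tu,Tw)v+\huaG(Tu,Tv,Tw)=\Courant{u,v,w}_T$ and, by \eqref{R3}, $T(\Courant{u,v,w})=\Courant{Tu,Tv,Tw}$.

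Since $\circ,\curlyvee,\{\cdot,\cdot,\cdot\},[\cdot,\cdot,\cdot]$ are all $V$-valued, every NS-axiom is an identity in $V$, and with the dictionary each becomes a routine computation: replace the operations by $\rho,\theta,D_{\rho,\theta},\huaF,\huaG$, move the powers of $\beta$ through $T$ using $T\beta^{k}=\alpha^{k}T$, and collapse $T$ composed with a derived bracket via \eqref{R2} and \eqref{R3}; the outcome is a known identity evaluated at the arguments $Tx_1,\dots,Tx_5$. The ``cocycle-type'' axioms \eqref{NScocycle1}, \eqref{NScocycle2}, \eqref{NScocycle3}, \eqref{NScocycle4} turn into the $(2,3)$-cocycle conditions \eqref{cocycle1}, \eqref{cocycle2}, \eqref{cocycle3}, \eqref{cocycle4} at $Tx_i$; for instance \eqref{NScocycle1} becomes $\circlearrowleft_{x_1,x_2,x_3}\big(\huaF([Tx_1,Tx_2],\alpha(Tx_3))-\rho(\alpha(Tx_1))\huaF(Tx_2,Tx_3)+\huaG(Tx_1,Tx_2,Tx_3)\big)=0$, which is exactly \eqref{cocycle1}. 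The remaining ``structure-type'' axioms — the cyclic $\{[\cdot,\cdot]^*,\alpha(\cdot),\alpha(\cdot)\}$-relation, the $\{\alpha(\cdot),[\cdot,\cdot]^*,\alpha(\cdot)\}$-relation, the relations \eqref{39} and \eqref{40}, and the two remaining ternary relations — reduce, after the same substitution, to the representation axioms \eqref{RL5}--\eqref{RL10} of $(V,\rho,\theta,\beta)$ evaluated at $Tx_i$ (applied to the leftover $\beta$-twisted argument); several of them may alternatively be read off directly from Proposition~\ref{PP} once one uses $[\cdot,\cdot]^*=[\cdot,\cdot]_T$ and $\Courant{\cdot,\cdot,\cdot}=\Courant{\cdot,\cdot,\cdot}_T$.

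The hard part is bookkeeping, not concept. The hat operation $\{\cdot,\cdot,\cdot\}^{\wedge}$ and the bracket $\Courant{\cdot,\cdot,\cdot}$ each expand into several summands, so the longest axioms (\eqref{39}, \eqref{NScocycle3}, \eqref{NScocycle4}) unfold into sizeable alternating sums, and one must match signs and — above all — the powers of $\alpha$ and $\beta$ term by term against the target identity, applying $T\beta^{k}=\alpha^{k}T$, \eqref{R2} and \eqref{R3} repeatedly. I would organise the write-up as: (i) a lemma recording the dictionary $[\cdot,\cdot]^*=[\cdot,\cdot]_T$, $\{u,v,w\}^{\wedge}=D_{\rho,\theta}(Tu,Tv)w$, $\Courant{\cdot,\cdot,\cdot}=\Courant{\cdot,\cdot,\cdot}_T$; (ii) the cocycle-type axioms, citing \eqref{cocycle1}--\eqref{cocycle4}; (iii) the structure-type axioms, citing \eqref{RL5}--\eqref{RL10} (and Proposition~\ref{PP} where convenient).
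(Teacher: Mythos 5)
Your proposal is correct and follows essentially the same route as the paper: the paper's proof expands $\{\cdot,\cdot,\cdot\}^{\wedge}$ and $\Courant{\cdot,\cdot,\cdot}$ via exactly your dictionary ($\{u,v,w\}^{\wedge}=D_{\rho,\theta}(Tu,Tv)w$, $T\Courant{u,v,w}=\Courant{Tu,Tv,Tw}$) and reduces the verified identity, namely \eqref{39}, to the representation axiom \eqref{RL7} evaluated at $Tu_i$, leaving the remaining axioms implicit. Your write-up is in fact more systematic than the paper's, since you make the dictionary explicit and classify every axiom as reducing either to a $(2,3)$-cocycle condition \eqref{cocycle1}--\eqref{cocycle4} or to a representation axiom \eqref{RL5}--\eqref{RL10}.
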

\begin{proof}
For any $u_1,u_2,u_3,u_4 \in V$, 
\begin{align*}
    &\{\beta(u_1),\beta(u_2),u_3\circ u_4\}^{\wedge}-\beta^2(u_3)\circ \{u_1,u_2,u_4\}^{\wedge}-\Courant{u_1,u_2,u_3}\circ\beta^2(u_4)\\
    &=\{\beta(u_1),\beta(u_2),u_3\circ u_4\}^{\wedge}-\beta^2(u_3)\circ \{u_1,u_2,u_4\}^{\wedge}-[ u_1,u_2,u_3]\circ\beta^2(u_4)\\
    &\quad -\{u_1,u_2,u_3\}^{\wedge}\circ\beta^2(u_4)-\{u_1,u_2,u_3\}\circ\beta^2(u_4)+\{u_2,u_1,u_3\}\circ\beta^2(u_4)\\
    &=D_{\rho,\theta}(T\beta(u_1),T\beta(u_2))\rho(Tu_3)u_4-\rho(T\beta^2(u_3))D_{\rho,\theta}(Tu_1,Tu_2)u_4-\rho(T\huaG(Tu_1,Tu_2,Tu_3))\beta^2(u_4)\\
    &\quad -\rho(TD_{\rho,\theta}(Tu_1,Tu_2)u_3)\beta^2(u_4)-\rho(T\theta(Tu_2,Tu_3)u_1)\beta^2(u_4)+\rho(T\theta(Tu_1,Tu_3)u_2)\beta^2(u_4)\\
    &=D_{\rho,\theta}(\alpha(Tu_1),\alpha(Tu_2))\rho(Tu_3)u_4-\rho(\alpha^2(Tu_3))D_{\rho,\theta}(Tu_1,Tu_2)u_4-\rho([Tu_1,Tu_2,Tu_3])\beta^2(u_4) \overset{\eqref{RL7}}{=}0.
\end{align*}
\end{proof}
\begin{cor}
If $R:A \to A$ is a $(\lambda,\mu)$-weighted Reynolds operator on Hom-Lie-Yamaguti
algebra, then  
\begin{align*}
     & x\circ y= [Tx,y]\text, \quad x\curlyvee y=
      \lambda[Rx,Ry],\\ &\{x,y,z\}=\Courant{Rx,Ry,z},\quad[x,y,z]=\mu\Courant{Rx,Ry,Rz}. 
    \end{align*} 
    define a NS-Hom-Lie-Yamaguti algebra  structure on $A$.
\end{cor}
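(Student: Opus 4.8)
The plan is to deduce this Corollary directly from the preceding theorem on the induced NS-Hom-Lie-Yamaguti algebra of a twisted $\mathcal O$-operator, specialised to the adjoint representation. First I would set $V=A$, $\beta=\alpha$ and take $(\rho,\theta)=(\ad,\frkR)$ to be the adjoint representation of $(A,[\cdot,\cdot],\Courant{\cdot,\cdot,\cdot},\alpha)$, so that $\rho(x)y=[x,y]$ and $\theta(x,y)z=\Courant{z,x,y}$; for this representation the operator $D_{\rho,\theta}$ of \eqref{rep} collapses to $D_{\ad,\frkR}(x,y)z=\Courant{x,y,z}$, which is exactly what makes the Example asserting that $(A,\ad,\frkR,\alpha)$ is a representation work, and which itself follows from \eqref{HLHLYskewsym} and \eqref{HomLieJacobiIdentity}. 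With these choices, comparing Definition~\ref{defn-h-tw} with $T=R$, $\huaF=\lambda[\cdot,\cdot]$ and $\huaG=\mu\Courant{\cdot,\cdot,\cdot}$ against the three defining equations of a $(\lambda,\mu)$-weighted Reynolds operator — rewriting $\rho(Rv)u=-[u,Rv]$ with \eqref{HLHLYskewsym} and $\theta(Rv,Rw)u-\theta(Ru,Rw)v=\Courant{u,Rv,Rw}-\Courant{v,Ru,Rw}=\Courant{u,Rv,Rw}+\Courant{Ru,v,Rw}$ — shows that $R$ is precisely a $(\lambda[\cdot,\cdot],\mu\Courant{\cdot,\cdot,\cdot})$-twisted $\mathcal O$-operator on $A$ with respect to the adjoint representation, as already recorded in the Example just after Definition~\ref{defn-h-tw}.

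Granting this identification, I would then apply the preceding theorem, which equips $V=A$ with the NS-Hom-Lie-Yamaguti operations $x\circ y=\rho(Rx)y$, $x\curlyvee y=\huaF(Rx,Ry)$, $\{x,y,z\}=\theta(Ry,Rz)x$ and $[x,y,z]=\huaG(Rx,Ry,Rz)$. Substituting the adjoint data together with $\huaF=\lambda[\cdot,\cdot]$ and $\huaG=\mu\Courant{\cdot,\cdot,\cdot}$ turns these into $x\circ y=[Rx,y]$, $x\curlyvee y=\lambda[Rx,Ry]$, $\{x,y,z\}=\Courant{x,Ry,Rz}$ and $[x,y,z]=\mu\Courant{Rx,Ry,Rz}$, i.e. the operations in the statement, so that all of \eqref{NScocycle1}--\eqref{NScocycle4} and the intermediate identities \eqref{39}--\eqref{40} come for free and nothing more needs to be checked by hand.

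The step I expect to require the actual work is the first one: making sure that $(\lambda[\cdot,\cdot],\mu\Courant{\cdot,\cdot,\cdot})$ genuinely plays the role of the twisting pair in Definition~\ref{defn-h-tw} relative to the adjoint representation, i.e. that it satisfies the $(2,3)$-cocycle conditions \eqref{cocycle1}--\eqref{cocycle4} on which the preceding theorem rests; unwinding those with $\rho=\ad$, $\theta=\frkR$ reduces them, via \eqref{HLHLYskewsym}--\eqref{HLYa2homLeib} and \eqref{HomLieJacobiIdentity}, to routine cyclic-sum manipulations of $[\cdot,\cdot]$ and $\Courant{\cdot,\cdot,\cdot}$. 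Once this is settled the Corollary is immediate; alternatively, for a self-contained argument one can substitute the four displayed operations directly into the NS-Hom-Lie-Yamaguti axioms and verify them one by one using only the $(\lambda,\mu)$-weighted Reynolds identities, but that is the longer and less conceptual path.
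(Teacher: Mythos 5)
Your proposal is correct and is exactly the paper's (implicit) argument: the corollary is meant to follow by specialising the preceding theorem to the adjoint representation $(\rho,\theta)=(\ad,\frkR)$ with $T=R$ and $(\huaF,\huaG)=(\lambda[\cdot,\cdot],\mu\Courant{\cdot,\cdot,\cdot})$, using the earlier example identifying weighted Reynolds operators as twisted $\mathcal O$-operators, and you correctly isolate the only step needing real work (the $(2,3)$-cocycle property of this pair for the adjoint representation, which the paper itself leaves unverified). Note only that the operations you actually derive, $x\circ y=[Rx,y]$ and $\{x,y,z\}=\Courant{x,Ry,Rz}$, are the right ones under the paper's convention $\theta(x,y)z=\Courant{z,x,y}$ — the corollary's printed $[Tx,y]$ and $\Courant{Rx,Ry,z}$ are typos, so your claim that your formulas are ``the operations in the statement'' is true only up to that correction — and the identity $D_{\ad,\frkR}(x,y)z=\Courant{x,y,z}$ follows from \eqref{LY1} and \eqref{HLHLYskewsym} rather than from \eqref{HomLieJacobiIdentity} as you cite.
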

\begin{re}
    Twisted $\mathcal O$-operators (hence $(\lambda,\mu)$-weighted Reynolds operators) are related to Hom-NS-Lie-Yamaguti algebras in the same way as 
Rota-Baxter operators are related to Hom-pre-Lie-Yamaguti algebras.
\end{re}
\begin{cor}
The sub-adjacent  Hom-Lie-Yamaguti algebra $(V,[\cdot,\cdot]^*,\Courant{\cdot,\cdot,\cdot},\alpha)$ of the above NS-Hom-Lie-Yamaguti algebra $( V,\circ,\{\cdot,\cdot,\cdot\},\curlyvee,[\cdot,\cdot,\cdot],\alpha)$ is exactly the Hom-Lie-Yamaguti algebra given in Proposition \ref{PP}.
\end{cor}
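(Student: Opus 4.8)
The plan is to show that the two binary brackets coincide and that the two ternary brackets coincide, by unwinding definitions; beyond Proposition \ref{PP}, Definition \ref{defn-h-tw}, the formulas for the induced NS-Hom-Lie-Yamaguti operations, and the formulas for the sub-adjacent brackets $[\cdot,\cdot]^{*}$ and $\Courant{\cdot,\cdot,\cdot}$, no further input is needed. Throughout, the linear operator playing the role of the twisting map on $V$ is $\beta$ (this is the map denoted $\alpha$ in the abstract NS-Hom-Lie-Yamaguti axioms), so the ambient Hom structures in Proposition \ref{PP} and here are the same, and it remains only to match the brackets.

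First I would treat the binary bracket. By the definition of $[\cdot,\cdot]^{*}$ and the induced operations $u\circ v=\rho(Tu)v$, $u\curlyvee v=\huaF(Tu,Tv)$,
\[
[u,v]^{*}=u\circ v-v\circ u+u\curlyvee v=\rho(Tu)v-\rho(Tv)u+\huaF(Tu,Tv)=[u,v]_{T},
\]
so the sub-adjacent binary bracket is literally $[\cdot,\cdot]_{T}$. A useful byproduct is that the defining identity \eqref{R2} of the twisted $\mathcal{O}$-operator now reads $T[u,v]^{*}=[Tu,Tv]$; I will use this in the ternary step.

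The main point is to identify $\{u,v,w\}^{\wedge}$ with $D_{\rho,\theta}(Tu,Tv)w$. Substituting $\{u,v,w\}=\theta(Tv,Tw)u$ into the definition of $\{\cdot,\cdot,\cdot\}^{\wedge}$ (with twist $\beta$) gives
\[
\{u,v,w\}^{\wedge}=\theta(Tv,Tu)w-\theta(Tu,Tv)w+\beta(u)\circ(v\circ w)-\beta(v)\circ(u\circ w)-[u,v]^{*}\circ\beta(w).
\]
Now I would use \eqref{R1}, i.e. $T\beta=\alpha T$, to rewrite $\beta(u)\circ(v\circ w)=\rho(T\beta(u))\rho(Tv)w=\rho(\alpha(Tu))\rho(Tv)w$ and similarly $\beta(v)\circ(u\circ w)=\rho(\alpha(Tv))\rho(Tu)w$, and the relation $T[u,v]^{*}=[Tu,Tv]$ to rewrite $[u,v]^{*}\circ\beta(w)=\rho([Tu,Tv])\beta(w)$. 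Comparing with the defining formula $D_{\rho,\theta}(x,y)=\theta(y,x)-\theta(x,y)+\rho(\alpha(x))\rho(y)-\rho(\alpha(y))\rho(x)-\rho([x,y])\circ\beta$, this is exactly $D_{\rho,\theta}(Tu,Tv)w$. Then, using $\{v,u,w\}=\theta(Tu,Tw)v$ and $[u,v,w]=\huaG(Tu,Tv,Tw)$,
\[
\Courant{u,v,w}=\{u,v,w\}^{\wedge}+\{u,v,w\}-\{v,u,w\}+[u,v,w]=D_{\rho,\theta}(Tu,Tv)w+\theta(Tv,Tw)u-\theta(Tu,Tw)v+\huaG(Tu,Tv,Tw),
\]
which is precisely $\Courant{u,v,w}_{T}$ of Proposition \ref{PP}.

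I do not anticipate a genuine obstacle: the verification is a direct substitution. The single delicate point is that the collapse of $\{\cdot,\cdot,\cdot\}^{\wedge}$ to $D_{\rho,\theta}$ relies on the twisted $\mathcal{O}$-operator relation \eqref{R2} (through $T[u,v]^{*}=[Tu,Tv]$) together with the compatibility \eqref{R1}; for an arbitrary NS-Hom-Lie-Yamaguti algebra $\{\cdot,\cdot,\cdot\}^{\wedge}$ does not simplify in this way, so it is the particular form of the induced operations that forces the sub-adjacent Hom-Lie-Yamaguti structure to agree with the one in Proposition \ref{PP}.
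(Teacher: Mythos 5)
Your proposal is correct and is exactly the direct verification the paper leaves implicit (the corollary is stated without proof as an immediate consequence of the induced NS-structure): the identification $[u,v]^{*}=[u,v]_{T}$ is immediate, and the key collapse $\{u,v,w\}^{\wedge}=D_{\rho,\theta}(Tu,Tv)w$ via \eqref{R1} and $T[u,v]^{*}=[Tu,Tv]$ from \eqref{R2} is precisely the point that makes $\Courant{u,v,w}=\Courant{u,v,w}_{T}$. Your remark that the twist on $V$ is $\beta$ (despite the corollary's notation $\alpha$) is also the right reading.
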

\subsection{NS-Hom-Lie-Yamaguiti algebras arising from NS-Hom-Lie algebras}
We start by recalling the definition of NS-Hom-Lie algebra as particular case of NS-Hom-Leibniz algebras introduced in \cite{WangKe}.
\begin{defn}
    A NS-Hom-Lie algebra is a linear space $A$ together with bilinear operations $\circ,\curlyvee : A \rightarrow A$ and a linear map $\alpha:A\to A$ in which $\curlyvee$ is skew-symmetric and satisfying, for all $ x,y,z \in A$,
  \begin{align} \label{NS-Lie-1}
        &[x,y]\circ \alpha(z)- \alpha(x)\circ(y\circ z)+\alpha(y) \circ(x\circ z)=0,\\\label{NS-Lie-2}
    & \alpha(x) \curlyvee[y, z] + \alpha(y) \curlyvee [z,x] + \alpha(z) \curlyvee [x,z
    ]  +  \alpha(x)\circ( y  \curlyvee z) + \alpha(y)  \circ (z\curlyvee x)+ \alpha(z) \circ (x\curlyvee y) =0,  
\end{align}
where $ [x,y] =  x \circ y - y\circ x + x\curlyvee y.$
\end{defn}
\begin{re}
    If the bilinear operation $\circ$ in the above definition is trivial, one gets that $(A, \curlyvee,\alpha)$ is a Hom-Lie
algebra. On the other hand, if $\curlyvee$ is trivial, then $(A, \circ,\alpha)$ becomes a Hom-pre-Lie algebra. Thus, NS-Hom-Lie algebras
are a generalization of both Hom-Lie algebras and Hom-pre-Lie algebras.
\end{re}
\begin{prop}
    Under the above notations $[\cdot,\cdot]$ gives a Hom-Lie structure on $A$ with respect $\alpha$, called the adjacent Hom-Lie algebra of $A$.
\end{prop}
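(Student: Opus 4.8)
The plan is to verify the two defining axioms of a Hom-Lie algebra for the bracket $[x,y]=x\circ y-y\circ x+x\curlyvee y$, namely skew-symmetry and the Hom-Lie Jacobi identity, using only \eqref{NS-Lie-1}, \eqref{NS-Lie-2} and the skew-symmetry of $\curlyvee$; in particular no multiplicativity hypothesis on $\alpha$ is needed.

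Skew-symmetry is immediate: since $\curlyvee$ is skew-symmetric,
\[
[y,x]=y\circ x-x\circ y+y\curlyvee x=-(x\circ y-y\circ x)-x\curlyvee y=-[x,y].
\]

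For the Hom-Lie Jacobi identity I would expand each summand as $[\alpha(x),[y,z]]=\alpha(x)\circ[y,z]-[y,z]\circ\alpha(x)+\alpha(x)\curlyvee[y,z]$ (applying the definition of $[\cdot,\cdot]$ with the elements $\alpha(x)$ and $[y,z]$) and split the cyclic sum into three parts,
\[
P=\underset{x,y,z}{\circlearrowleft}\alpha(x)\circ[y,z],\qquad
Q=-\underset{x,y,z}{\circlearrowleft}[y,z]\circ\alpha(x),\qquad
R=\underset{x,y,z}{\circlearrowleft}\alpha(x)\curlyvee[y,z].
\]
To handle $Q$, I would apply \eqref{NS-Lie-1} (with $(x,y,z)$ relabelled to $(y,z,x)$) to get $[y,z]\circ\alpha(x)=\alpha(y)\circ(z\circ x)-\alpha(z)\circ(y\circ x)$, and then shift the cyclic summation index so that $Q=-\underset{x,y,z}{\circlearrowleft}\alpha(x)\circ(y\circ z)+\underset{x,y,z}{\circlearrowleft}\alpha(x)\circ(z\circ y)$. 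On the other hand, expanding $[y,z]=y\circ z-z\circ y+y\curlyvee z$ inside $P$ gives $P=\underset{x,y,z}{\circlearrowleft}\alpha(x)\circ(y\circ z)-\underset{x,y,z}{\circlearrowleft}\alpha(x)\circ(z\circ y)+\underset{x,y,z}{\circlearrowleft}\alpha(x)\circ(y\curlyvee z)$, so the two ``commutator-type'' cyclic sums cancel against those in $Q$ and one is left with $P+Q=\underset{x,y,z}{\circlearrowleft}\alpha(x)\circ(y\curlyvee z)$. Finally, identity \eqref{NS-Lie-2} is precisely the statement that $\underset{x,y,z}{\circlearrowleft}\alpha(x)\curlyvee[y,z]+\underset{x,y,z}{\circlearrowleft}\alpha(x)\circ(y\curlyvee z)=0$, that is $R+(P+Q)=0$, which yields $\underset{x,y,z}{\circlearrowleft}[\alpha(x),[y,z]]=0$, as desired.

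The computation is short; the only steps requiring care are tracking the signs in the three-term expansion of each nested bracket and performing the cyclic relabellings in $Q$ correctly, so that the residual term $\underset{x,y,z}{\circlearrowleft}\alpha(x)\circ(y\curlyvee z)$ matches the second group of terms in \eqref{NS-Lie-2}. I do not anticipate any genuine obstacle here: this is the Hom-NS-Lie analogue of the familiar fact that the total operation of an NS-Lie (or pre-Lie) structure is a Lie bracket, the two NS-Hom-Lie axioms being split exactly so as to produce the two halves (the $\circ$-part and the $\curlyvee$-part) of the Hom-Jacobi identity.
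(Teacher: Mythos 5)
Your proof is correct. The paper actually states this proposition without any proof, so there is nothing to compare against; your verification --- skew-symmetry from the skew-symmetry of $\curlyvee$, and the Hom-Jacobi identity by splitting $\underset{x,y,z}{\circlearrowleft}[\alpha(x),[y,z]]$ into the three cyclic sums $P$, $Q$, $R$, cancelling the $\circ$-commutator parts of $P$ against $Q$ via \eqref{NS-Lie-1}, and absorbing the residue $P+Q=\underset{x,y,z}{\circlearrowleft}\alpha(x)\circ(y\curlyvee z)$ together with $R$ into \eqref{NS-Lie-2} --- is exactly the natural argument and all the relabellings check out. One small point worth flagging: as printed, the third $\curlyvee$-term in \eqref{NS-Lie-2} reads $\alpha(z)\curlyvee[x,z]$, which is evidently a typo for $\alpha(z)\curlyvee[x,y]$ (otherwise the identity is not cyclic and your final cancellation $R+(P+Q)=0$ would not literally match it); your argument implicitly uses the corrected form, which is clearly the intended one.
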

\begin{prop}
    Let $T:V \to A $ be a twisted $\mathcal{O}$-operator on  Hom-Lie algebra $(A, [\cdot,\cdot],\alpha)$ with respect to the representation $(V,\rho,\beta)$. Then
\begin{align}\label{induced-NS}
    &u\circ v=\rho(Tu)v \quad\text{and} \quad
    u\curlyvee v=\huaF(Tu,Tv)
\end{align}
defines a  \textsf{NS-Lie algebra} structure on $V$.
\end{prop}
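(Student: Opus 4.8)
The plan is to verify directly the two defining identities of a \textsf{NS-Hom-Lie algebra}, namely \eqref{NS-Lie-1} and \eqref{NS-Lie-2}, for the operations $u\circ v=\rho(Tu)v$ and $u\curlyvee v=\huaF(Tu,Tv)$ on $V$, with the structure map taken to be $\beta:V\to V$. Since $\huaF$ is skew-symmetric, so is $\curlyvee$. The associated bracket is
\[
[u,v]=u\circ v-v\circ u+u\curlyvee v=\rho(Tu)v-\rho(Tv)u+\huaF(Tu,Tv),
\]
so the defining identity of the $\huaF$-twisted $\mathcal O$-operator $T$ on $(A,[\cdot,\cdot],\alpha)$ says precisely $T[u,v]=[Tu,Tv]$, while the other defining identity says $T\beta=\alpha T$. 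These two relations, together with the Chevalley--Eilenberg $2$-cocycle condition \eqref{CoLie} on $\huaF$ and the representation identities \eqref{RLIE}, are all that the proof uses.

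First I would check \eqref{NS-Lie-1}. Expanding the operations and then substituting $T[u,v]=[Tu,Tv]$ and $T\beta=\alpha T$, for $u,v,w\in V$ one obtains
\[
[u,v]\circ\beta(w)-\beta(u)\circ(v\circ w)+\beta(v)\circ(u\circ w)
=\big(\rho([Tu,Tv])\circ\beta\big)(w)-\rho(\alpha(Tu))\rho(Tv)w+\rho(\alpha(Tv))\rho(Tu)w ,
\]
which vanishes by the second relation in \eqref{RLIE} with $x=Tu$, $y=Tv$, since it rewrites $\rho([Tu,Tv])\circ\beta$ as $\rho(\alpha(Tu))\rho(Tv)-\rho(\alpha(Tv))\rho(Tu)$.

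Next I would check \eqref{NS-Lie-2}. Using $T\beta=\alpha T$ and $[Tv,Tw]=T[v,w]$, each summand becomes
\[
\beta(u)\curlyvee[v,w]=\huaF\big(\alpha(Tu),[Tv,Tw]\big),\qquad \beta(u)\circ(v\curlyvee w)=\rho(\alpha(Tu))\huaF(Tv,Tw),
\]
so the cyclic sum over $u,v,w$ equals $\underset{x,y,z}{\circlearrowleft}\big(\rho(\alpha(x))\huaF(y,z)+\huaF(\alpha(x),[y,z])\big)$ evaluated at $x=Tu$, $y=Tv$, $z=Tw$, which is exactly \eqref{CoLie} and hence is zero. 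This establishes both axioms, so $(V,\circ,\curlyvee,\beta)$ is a \textsf{NS-Hom-Lie algebra}.

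The argument is a direct specialization of the proof that a twisted $\mathcal O$-operator on a Hom-Lie-Yamaguti algebra induces a Hom-NS-Lie-Yamaguti algebra, and I do not expect a genuine obstacle. The only point requiring a little care is the bookkeeping in \eqref{NS-Lie-2}: one must pass the cyclic summation through $T$ and recognize the resulting expression over $Tu,Tv,Tw$ as an instance of \eqref{CoLie}, which is legitimate because the $2$-cocycle condition on $\huaF$ holds for all triples in $A$, in particular for $(Tu,Tv,Tw)$.
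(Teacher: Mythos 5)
Your proof is correct: the two axioms \eqref{NS-Lie-1} and \eqref{NS-Lie-2} for $(V,\circ,\curlyvee,\beta)$ reduce, via $T\beta=\alpha T$ and $T[u,v]=[Tu,Tv]$, exactly to the representation identity \eqref{RLIE} and the $2$-cocycle condition \eqref{CoLie} evaluated at $(Tu,Tv,Tw)$, and skew-symmetry of $\curlyvee$ follows from that of $\huaF$. The paper states this proposition without any proof (deferring implicitly to the references on twisted operators for Hom-Lie algebras), so there is nothing to compare against; your direct verification is the natural argument and fills that gap.
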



Let
  $(A,\circ ,\curlyvee,\alpha )$ be a \textsf{NS}-Hom-Lie algebra. We define the two brackets 
  \begin{align}
     &\{x,y,z\} = \alpha(z) \circ (y \circ x),\\
     &[x,y,z]=  [x,y]\curlyvee \alpha(z) - \alpha(z) \circ (x\curlyvee y ) \ \ \forall x,y,z \in L.
 \end{align}
Let $[\![x,y,z]\!]=[[x,y],z]$.
\begin{thm}
Under the above notations   $( V,\circ,\{\cdot,\cdot,\cdot\},\curlyvee,[\cdot,\cdot,\cdot],\alpha)$ is a NS-Hom-Lie Yamaguti algebra.
 
\end{thm}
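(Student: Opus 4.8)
The plan is to verify the defining identities of a Hom-NS-Lie-Yamaguti algebra by reducing them to the two relations \eqref{NS-Lie-1}, \eqref{NS-Lie-2} of the Hom-NS-Lie algebra and the skew-symmetry of $\curlyvee$. I would begin by computing the derived operations occurring in the definition. From $[x,y]=x\circ y-y\circ x+x\curlyvee y$ one gets at once $[x,y]^{*}=[x,y]$. With $\{x,y,z\}=\alpha(z)\circ(y\circ x)$, expanding $\{x,y,z\}^{\wedge}$ and using \eqref{NS-Lie-1} (equivalently $\alpha(x)\circ(y\circ z)-\alpha(y)\circ(x\circ z)=[x,y]\circ\alpha(z)$) gives $\{x,y,z\}^{\wedge}=[x,y]\circ\alpha(z)$; substituting this into the definition of $\Courant{\cdot,\cdot,\cdot}$ and using \eqref{NS-Lie-1} once more yields $\Courant{x,y,z}=[[x,y],\alpha(z)]$, so that $\Courant{\cdot,\cdot,\cdot}$ is exactly the ternary bracket of the Hom-Lie-Yamaguti algebra induced from the adjacent Hom-Lie algebra $(A,[\cdot,\cdot],\alpha)$ as in Theorem~\ref{induced}. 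Finally, $[x,y,z]=[x,y]\curlyvee\alpha(z)-\alpha(z)\circ(x\curlyvee y)$ is precisely the map $\huaG_{\rho,\huaF}$ of the last theorem of Section~\ref{Sec2}, for $\rho(x)y:=x\circ y$ and $\huaF:=\curlyvee$.

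Granting that $\alpha$ is an endomorphism of $\circ$ and of $\curlyvee$ (see the caveat below), \eqref{NS-Lie-1} says precisely that $(A,\rho,\alpha)$, $\rho(x)y=x\circ y$, is a representation of $(A,[\cdot,\cdot],\alpha)$ in the sense of \eqref{RLIE}, while the skew-symmetry of $\curlyvee$ and \eqref{NS-Lie-2} (which, rewritten cyclically, is $\underset{x,y,z}{\circlearrowleft}\big(\alpha(x)\circ(y\curlyvee z)+\alpha(x)\curlyvee[y,z]\big)=0$) say precisely that $\curlyvee$ is a $2$-cocycle of $(A,[\cdot,\cdot],\alpha)$ with coefficients in $(A,\rho,\alpha)$ in the sense of \eqref{CoLie}. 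By the last theorem of Section~\ref{Sec2}, this makes $\theta_{\rho}$, with $\theta_{\rho}(x,y)z=\alpha(y)\circ(x\circ z)=\{z,x,y\}$, a representation of the induced Hom-Lie-Yamaguti algebra $(A,[\cdot,\cdot],\Courant{\cdot,\cdot,\cdot},\alpha)$, and makes $(\curlyvee,[\cdot,\cdot,\cdot])$ a $(2,3)$-cocycle of it. With these identifications, the defining identities split: \eqref{NScocycle1}, \eqref{NScocycle2}, \eqref{NScocycle3}, \eqref{NScocycle4} become the $(2,3)$-cocycle conditions \eqref{cocycle1}--\eqref{cocycle4} (reading \eqref{NScocycle1} with the cyclic sum $\underset{x_1,x_2,x_3}{\circlearrowleft}$, as in \eqref{cocycle1}), while \eqref{39}, \eqref{40} and the unnumbered relations, once $\{\cdot,\cdot,\cdot\}^{\wedge}$ and $\Courant{\cdot,\cdot,\cdot}$ are rewritten via the adjacent Hom-Lie bracket, become the representation axioms \eqref{RL1}--\eqref{RL10} for $\theta_{\rho}$ on that Hom-Lie-Yamaguti algebra.

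The main obstacle is carrying out this last matching, especially for the long relations \eqref{NScocycle3}, \eqref{NScocycle4} and the two unnumbered five-term ``Leibniz'' identities: one must expand $\{\cdot,\cdot,\cdot\}^{\wedge}$ and $\Courant{\cdot,\cdot,\cdot}$ fully, identify $\{x,y,z\}^{\wedge}$ with $D_{\rho,\theta}(x,y)z$ up to sign, and cancel a large number of $\circ$- and $\curlyvee$-monomials by repeated use of \eqref{NS-Lie-1} and \eqref{NS-Lie-2}; every cancellation is forced, but the bookkeeping and sign-tracking are heavy, and it is cleanest to treat the $\{\cdot,\cdot,\cdot\}$-part and the $\curlyvee$/$[\cdot,\cdot,\cdot]$-part of each identity separately. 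The caveat mentioned above is that \eqref{RLIE}, the cochain condition on $\curlyvee$, and the applicability of Theorem~\ref{induced} all need $\alpha$ to respect $\circ$ and $\curlyvee$, hence $[\cdot,\cdot]$; this should be part of the Hom-NS-Lie algebra hypotheses, or else one restricts to the multiplicative case.
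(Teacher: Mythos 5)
Your proof is correct in substance but takes a genuinely different route from the paper. The paper verifies the defining identities of an NS-Hom-Lie-Yamaguti algebra one at a time by direct expansion and cancellation using \eqref{NS-Lie-1} and \eqref{NS-Lie-2} (and it only writes out three of them, leaving the rest to ``direct computation''). You instead first establish the structural dictionary $[x,y]^{*}=[x,y]$, $\{x,y,z\}^{\wedge}=[x,y]\circ\alpha(z)=D_{\rho,\theta_\rho}(x,y)z$, $\Courant{x,y,z}=[[x,y],\alpha(z)]$ and $[x,y,z]=\huaG_{\rho,\curlyvee}(x,y,z)$, and then observe that the NS-Hom-Lie-Yamaguti axioms are exactly the representation axioms for $\theta_\rho$ together with the $(2,3)$-cocycle conditions \eqref{cocycle1}--\eqref{cocycle4} for $(\curlyvee,[\cdot,\cdot,\cdot])$ on the induced Hom-Lie-Yamaguti algebra of Theorem \ref{induced}; these are supplied by the last theorem of Section \ref{Sec2}. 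This is a legitimate and arguably cleaner organization: the underlying cancellations are the same (e.g.\ your reduction of \eqref{NScocycle1} to the $2$-cocycle condition reproduces verbatim the ``$=2\circlearrowleft(\cdots)=0$'' step in the paper, and your treatment of \eqref{40} via \eqref{RL8} is the paper's computation in disguise), but your route makes the provenance of each identity transparent and explains why the count of axioms matches. Two remarks. First, your identifications $\{x,y,z\}^{\wedge}=[x,y]\circ\alpha(z)$ and $\Courant{x,y,z}=[[x,y],\alpha(z)]$ are correct and are a genuine addition: the paper never states them, yet they are what make the whole statement conceptually obvious. Second, your caveat about multiplicativity is well taken and applies to the paper's own proof as well: the displayed computation for \eqref{40} silently uses $\alpha([x_2,x_3])=[\alpha(x_2),\alpha(x_3)]$ and $\alpha(x_1\circ x_4)=\alpha(x_1)\circ\alpha(x_4)$, and both Theorem \ref{induced} and axiom \eqref{RL1} require $\alpha$ to be an endomorphism of $\circ$ and $\curlyvee$; this hypothesis is missing from the stated definition of an NS-Hom-Lie algebra and should be added (or one should restrict to the multiplicative case), exactly as you say. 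The only respect in which your argument is less complete than it could be is that the identity-by-identity matching for the long relations is asserted rather than carried out; but the paper's own verification is no more complete at those points.
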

 \begin{proof}
 Let $x,y,z$ and $t  \in A$. First by  direct calculation we have   
    \begin{align*}
&\underset{x,y,z}{\circlearrowleft}\bigg([x,y]^*\curlyvee \alpha(z)-\alpha(x)\circ( y\curlyvee z)+[x,y,z]\bigg)\\&=\underset{x,y,z}{\circlearrowleft}\bigg([x,y]^*\curlyvee \alpha(z)-\alpha(x)\circ( y\curlyvee z)+[x,y]\curlyvee \alpha(z)-\alpha(z)\circ(x\curlyvee y)\bigg)\\&=2\underset{x,y,z}{\circlearrowleft}\bigg([x,y]^*\curlyvee \alpha(z)-\alpha(x)\circ( y\curlyvee z)\bigg)=0.
    \end{align*}
    Thus we have proved \eqref{NScocycle1}. Next, 
 we have 
    \begin{align*}
        &\{\alpha(x_4),\alpha(x_1),[x_2,x_3]\}-\alpha^2(x_2)\circ\{x_4,x_1,x_3\}+\alpha^2(x_3)\circ \{x_4,x_1,x_2\}\\
        &=\alpha([x_2,x_3])\circ(\alpha(x_1)\circ\alpha(x_4))-\alpha^2(x_2)\circ (\alpha(x_3)\circ(x_1\circ x_4))+\alpha^2(x_3)\circ (\alpha(x_2)\circ(x_1\circ x_4))
        \overset{\eqref{NS-Lie-1}}{=}0.
    \end{align*}
    Hence the identity \eqref{40} is proved.
    Now we check the identity \eqref{NScocycle3}. It follows from the NS-Hom-Lie identity \eqref{NS-Lie-2}:
      \begin{align*}
       & \{\alpha(x),\alpha(y),z\curlyvee t\}^\wedge+\alpha^2(t)\circ[x,y,z]-\Courant{x,y,z}\curlyvee\alpha^2(t)\\ 
       &\quad +[\alpha(x),\alpha(y),[z,t]^*]-\alpha^2(z)\circ [x,y,t]-\alpha^2(z)\curlyvee\Courant{x,y,t}\\
       &=\underline{[\alpha(x),\alpha(y)]\circ(z\curlyvee t)}+\underline{\alpha^2(t) \circ([x,y]\curlyvee \alpha(z))}-\underline{\underline{\alpha^2(t)\circ(\alpha(z)\circ(x\curlyvee y))}}-\underline{[[x,y],\alpha(z)]\curlyvee \alpha^2(t)}\\
       &\quad +\underline{[\alpha(x),\alpha(y)]\curlyvee \alpha([z,t]^*)}-\underline{\underline{\alpha([z,t]^*)\circ(\alpha(x)\curlyvee \alpha(y))}}-\underline{\alpha^2(z)\circ([x,y]\curlyvee\alpha(t))}\\
       &\quad +\underline{\underline{\alpha^2(z)\circ(\alpha(t)\circ(x\curlyvee y))}}-\underline{\alpha^2(z)\curlyvee [[x,y],\alpha(t)]}=0.
    \end{align*}
    The other identities are shown by direct computation and using \eqref{NS-Lie-1}-\eqref{NS-Lie-2}. 
    
    Thus, $( V,\circ,\{\cdot,\cdot,\cdot\},\curlyvee,[\cdot,\cdot,\cdot],\alpha)$ is a NS-Hom-Lie Yamaguti algebra. 
 \end{proof}

\end{document}